\documentclass[11pt,reqno]{amsart}
\usepackage[a4paper,margin=27mm]{geometry}
\usepackage[T1]{fontenc}
\usepackage{lmodern}
\usepackage{amsmath,amssymb,amsthm,mathtools}
\usepackage{mathrsfs}
\usepackage{microtype}
\usepackage{enumitem}
\usepackage[hidelinks]{hyperref}
\usepackage{bookmark}
\allowdisplaybreaks[1]
\hypersetup{
 pdftitle={Non-vanishing of primitive cubic, quartic and sextic L-functions},
 pdfauthor={Ziwei Hong},
 pdfsubject={Twisted first moments and lower bounds for non-vanishing proportions},
 pdfkeywords={cubic L-functions, quartic L-functions, sextic L-functions, non-Kummer, truncated exponentials, non-vanishing}
}
\numberwithin{equation}{section}
\newtheorem{theorem}{Theorem}[section]
\newtheorem{proposition}[theorem]{Proposition}
\newtheorem{lemma}[theorem]{Lemma}
\newtheorem{corollary}[theorem]{Corollary}
\theoremstyle{definition}

\theoremstyle{remark}
\newtheorem{remark}[theorem]{Remark}
\newcommand{\F}{\mathbb F}
\newcommand{\C}{\mathbb C}
\newcommand{\E}{\mathbb E}
\newcommand{\Aq}{\mathcal A_q}
\newcommand{\AQ}{\mathcal A_{q^2}}
\newcommand{\Cfour}{\mathcal C_4(g)}
\newcommand{\Lq}{L_q}
\newcommand{\Lpol}{\mathcal L_q}

\newcommand{\norm}[1]{\lVert#1\rVert}

\DeclareMathOperator{\cond}{cond}
\DeclareMathOperator{\rad}{rad}
\DeclareMathOperator{\Rea}{Re}
\DeclareMathOperator{\Res}{Res}

\newcommand{\sumchi}{\sum_{\chi\in\Cfour}}
\newcommand{\sumF}{\sum_{\substack{F\in\AQ\text{ square-free}\\(F,F^\sigma)=1}}}

\title[Non-vanishing of cubic, quartic and sextic $L$-functions]{Non-vanishing of primitive cubic, quartic and sextic $L$-functions}
\author{Ziwei Hong}
\address{School of Mathematics, Renmin University of China, Beijing, P.R. China}
\address{School of Mathematics and Statistics, UNSW Sydney, Sydney, Australia}
\email{hongziwei@live.com}
\date{11 October 2026}
\subjclass[2020]{11M06, 11M38, 11R58, 11L05}
\keywords{cubic characters, quartic characters, sextic characters, non-Kummer setting, twisted first moments, non-vanishing}

\begin{document}
\begin{abstract}
We establish lower bounds for the non-vanishing proportions of primitive
cubic, quartic and sextic Dirichlet $L$-functions over $\F_q[T]$ in the
non-Kummer setting. Adapting truncated-exponential techniques
for shifted moments, we improve the cubic lower bound of David, Florea and
Lal\'in and obtain positive lower bounds for quartic and sextic
families.
For each family, the lower limit of the non-vanishing proportion
exceeds $e^{-e^{53}}$ as the conductor degree tends to infinity with
$q$ fixed. We use a common choice of truncation parameters and
explicitly control the exceptional sets. The cubic argument uses the
published twisted first moment. We also establish uniform twisted first moments in the
quartic and sextic cases, yielding an asymptotic formula for the
sextic first moment.
\end{abstract}
\maketitle

\section{Introduction}\label{sec:introduction}

The central non-vanishing problem asks how often the value
$L(1/2,\chi)$ is nonzero as $\chi$ varies in an arithmetic family.
A quantitative form of this problem is to obtain a positive lower
bound for the proportion of nonzero central values as the conductor
grows. The answer depends on the ground field, the order of the
characters, and the local conditions defining the family; results
for all primitive characters need not give a positive proportion
in a fixed-order subfamily.

For primitive Dirichlet characters modulo a large integer,
Iwaniec and Sarnak~\cite{IS99} established positive-proportion
non-vanishing by mollification. More recently, Durkan and
Pearce-Crump~\cite{DP26} proved unconditionally a proportion of at
least $3/8-o(1)$ for all primitive characters modulo
$N\not\equiv2\pmod4$ as $N\to\infty$, without restricting their order.

For fixed-order families over the rationals, Gao and Zhao~\cite{GZ22}
proved positive-proportion non-vanishing under the generalized
Riemann hypothesis for primitive cubic Dirichlet characters of
conductor coprime to $3$ and primitive quartic Dirichlet characters
of odd conductor whose squares remain primitive. They also proved,
under the same hypothesis, positive-proportion non-vanishing in a
sextic Hecke family over $\mathbb Q(\omega)$, where $\omega$ is a
primitive cube root of unity~\cite{GZsextic24}.

Unconditionally, David, de Faveri, Dunn, and Stucky~\cite{DdFDS26}
proved positive-proportion non-vanishing for cubic Hecke
$L$-functions attached to square-free parameters
$c\in\mathbb Z[\omega]$ with $c\equiv1\pmod9$.
Castillo, de Faveri, and Dunn~\cite{CdFD26} obtained the corresponding
quartic result over $\mathbb Q(i)$ for square-free residue-symbol
parameters $c\in\mathbb Z[i]$ with $c\equiv1\pmod{(1+i)^7}$.
More generally, Diaconu, Ion, Pa\c{s}ol, and Popa~\cite{DIPP}
used multiple Dirichlet series to evaluate twisted moments and
prove positive-proportion non-vanishing in higher-order Hecke
residue-symbol families over number fields containing the
$2r$-th roots of unity, for $r\ge3$.

Over function fields, David, Florea, and Lal\'in~\cite{DFL22}
evaluated cubic first moments in the Kummer and non-Kummer settings.
Their non-vanishing result~\cite{DFL21} for the full non-Kummer
cubic family combines a mollified first moment with an upper bound
of the correct order for the mollified second moment.
Lin~\cite{Lin} computed one-level densities for cubic and quartic
families in the Kummer setting and for cubic, quartic, and sextic
families in the non-Kummer setting. In the Kummer setting,
David, Florea, and Lal\'in~\cite{DFL25} proved positive-proportion
non-vanishing for square-free residue-symbol families of fixed
order $\ell\ge3$, with $q\equiv1\pmod{2\ell}$ and parameter
degrees not divisible by $\ell$, by extending the Fourier support in a one-level density calculation beyond $(-1,1)$.

We work with primitive characters of orders $3$, $4$, and $6$ in
the non-Kummer setting. The base field lacks the relevant roots
of unity, so the Kummer results above do not directly cover our
families; passing to the quadratic constant-field extension changes
both the $L$-functions and the averaging family. We improve the
explicit cubic non-vanishing bound and obtain positive lower bounds
for the quartic and sextic families defined below. For the latter
two families, we also establish uniform twisted first moments.

Throughout, $q$ is a fixed odd prime power. For $\ell\in\{3,4,6\}$,
assume
\[
 q\equiv-1\pmod\ell.
\]
Let $\mathcal C_\ell(g)$ be the family of primitive characters of exact
order $\ell$ whose associated cyclic extensions have genus $g$, with
\begin{equation}\label{intro:conductor}
 \begin{cases}
 \text{no additional condition},&\ell=3,\\
 \cond(\chi^2)=\cond(\chi),&\ell=4,\\
 \cond(\chi^2)=\cond(\chi^3)=\cond(\chi),&\ell=6.
 \end{cases}
\end{equation}
Thus the cubic family is the full primitive cubic family, whereas the
quartic and sextic families require local order $\ell$ at every
ramified prime. These conductor conditions are part of our theorems.

Put $Q=q^2$, and let $\sigma$ act on coefficients by $a\mapsto a^q$.
The parameterization of \cite[Lemma~3.6]{Lin} gives
\begin{equation}\label{intro:family}
 \begin{split}
 \mathcal C_\ell(g)=\biggl\{
 \chi_F=\left.\left(\frac{\cdot}{F}\right)_\ell\right|_{\F_q[T]}:
 &\ F\in\F_Q[T]\text{ monic and square-free},\\
 &\ \deg F=m,\quad(F,F^\sigma)=1\biggr\},
 \qquad m=\frac{g}{\ell-1}+1.
 \end{split}
\end{equation}
The conductor is $FF^\sigma$ and has degree $2m$. Every character is
even, and the completed $L$-polynomial has degree $2m-2$.
In particular, $g=(\ell-1)(m-1)$; the limits below are taken through
these admissible genera. Conjugate characters are counted separately.
We write $|f|=q^{\deg f}$ for $f\in\F_q[T]$. For
$\chi\in\mathcal C_\ell(g)$, denote its Dirichlet $L$-function by
$L_q(s,\chi)$.

\subsection{Main results}

The arithmetic factors in the non-vanishing bounds have a common
form. For a monic irreducible polynomial $P\in\F_q[T]$, put
\begin{equation}\label{eq:rho}
 \rho_P=
 \begin{cases}
 1,&\deg P\text{ odd},\\[1mm]
 \displaystyle\frac{1}{1+2|P|^{-1}},&\deg P\text{ even},
 \end{cases}
 \qquad \rho(h)=\prod_{P\mid h}\rho_P.
\end{equation}
Define $d_v$ and $A_{\ell,q,G}$ by
\begin{align}
 \frac{ e^{-x-\frac{x^2}{2}}}{1-x}
 &= e^{\sum_{v\ge3}\frac{x^v}{v}}
   =\sum_{v\ge0}d_vx^v,\qquad |x|<1,\label{eq:GZlocal}\\
 A_{\ell,q,G}
 &=\prod_P\left(1+\rho_P\sum_{k\ge1}
                    \frac{d_{\ell k}}{|P|^{\ell k/2}}\right).
                    \label{intro:arithmetic}
\end{align}
Here $0\le d_v\le1$, with $d_3=1/3$, $d_4=1/4$, and $d_6=2/9$.
Let $\zeta_q$ denote the zeta function of $\F_q[T]$.
The products converge absolutely and satisfy
\begin{equation}\label{intro:arithmetic-bounds}
 1<A_{\ell,q,G}\le\zeta_q(\ell/2).
\end{equation}

\begin{theorem}\label{thm:main-non-vanishing}
Let $\ell\in\{3,4,6\}$, and fix an odd prime power
$q\equiv-1\pmod\ell$. For the family \eqref{intro:family},
\begin{equation}\label{intro:non-vanishing}
 \liminf_{\substack{g\to\infty\\\ell-1\mid g}}
 \frac{\#\{\chi\in\mathcal C_\ell(g):L_q(1/2,\chi)\ne0\}}
      {|\mathcal C_\ell(g)|}
 \ge\frac{A_{\ell,q,G}}{2 e^{e^{52}+3}}
 >\frac{1}{2 e^{e^{52}+3}}
 > e^{-e^{53}}.
\end{equation}
\end{theorem}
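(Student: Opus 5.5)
The plan is to use the lower-bound inequality
\[
\#\{\chi:L_q(1/2,\chi)\ne0\}\ \ge\ \frac{\bigl|\sum_{\chi}L_q(1/2,\chi)M(\chi)\bigr|^2}{\sum_{\chi}|L_q(1/2,\chi)|^2|M(\chi)|^2},
\]
with all sums over $\chi\in\mathcal C_\ell(g)$. Here $M(\chi)$ is a mollifier built from truncated exponentials of short prime sums, in the manner of the shifted-moment techniques the abstract alludes to. We do not have an asymptotic second moment in the quartic and sextic cases. So the denominator will not be computed directly. Instead it will be bounded above by replacing $|L_q(1/2,\chi)|^2$ with a product of short Dirichlet polynomials, which are then averaged by orthogonality.

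Concretely, I would fix a sequence of truncation levels $q^{\theta_0 m}<q^{\theta_1 m}<\dots<q^{\theta_J m}$, with $\theta_j$ growing geometrically and $\sum_j\theta_j$ small. Put
\[
\mathcal P_j(\chi)=\sum_{\theta_{j-1}m<\deg P\le\theta_j m}\frac{\chi(P)}{|P|^{1/2}},
\]
and take
\[
M(\chi)=\prod_j E_{K_j}\bigl(-\mathcal P_j(\chi)\bigr),\qquad E_K(x)=\sum_{k\le K}\frac{x^k}{k!},
\]
with $K_j$ a large multiple of the variance of $\mathcal P_j$. The truncation keeps $M$ a Dirichlet polynomial of length $q^{\varepsilon m}$.

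Step 1 is the numerator. I would expand $M$ into a linear combination of $\chi(h)$ with $h$ short, and apply the twisted first moment: the published cubic one from David--Florea--Lal\'in, and the uniform quartic and sextic twisted moments established later in the paper. The main term picks out $h$ that are $\ell$-th powers, weighted by the local densities $\rho_P$. The factor $\rho_P$ with $1+2|P|^{-1}$ at even-degree $P$ reflects that such $P$ split in $\F_Q[T]$ and that the condition $(F,F^\sigma)=1$ applies. Summing the truncated exponential coefficients against this diagonal reproduces, up to $1+o(1)$, the Euler product $A_{\ell,q,G}$ through the generating function \eqref{eq:GZlocal}. The bounds \eqref{intro:arithmetic-bounds} follow from $0\le d_v\le1$.

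Step 2 is the denominator. On the set of $\chi$ where every $|\mathcal P_j(\chi)|$ is at most $K_j/e^2$, I would use the conditional-free upper bound principle (Soundararajan/Harper style, available in function fields via the explicit formula) to get $|L_q(1/2,\chi)|\ll\prod_j|E_{K_j}(\mathcal P_j(\chi))|$. Then $|L M|^2$ becomes a product of $|E(\mathcal P_j)E(-\mathcal P_j)|^2\approx1$. The exceptional sets $\mathcal E_j$, where $|\mathcal P_j|$ is large, would be handled by Hölder: a high even moment of $\mathcal P_j$ (computed by orthogonality in the family), together with $\sum|L|^2$ over $\chi\in\mathcal E_j$ bounded by an upper bound for $L$ on the remaining levels. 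Combining the two bounds gives the lower bound $A_{\ell,q,G}/C$, where $C$ collects the losses in Steps 1 and 2.

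The main obstacle I expect is making all of this explicit. One needs a single admissible choice of $\theta_j,K_j$ under which the twisted first moment remains valid (length restriction) and the exceptional sets contribute at most a fixed fraction. With geometric $\theta_j$ the number of levels is forced to be about $e^{50}$, and the loss accumulates as $e^{e^{52}}$. I would first try $\theta_j=e^{j}\theta_0$ with $K_j\asymp\theta_j^{-3/4}$ and track the constants honestly, so as to reach $2e^{e^{52}+3}$. The final inequality $\frac{1}{2e^{e^{52}+3}}>e^{-e^{53}}$ is then elementary.
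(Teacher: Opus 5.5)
Your route differs from the paper's in using Cauchy--Schwarz and a mollified second moment. As written it has two concrete gaps.

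\textbf{The prime-square terms.} For $\ell\in\{3,4,6\}$ the character $\chi^2$ is nonprincipal, so the $f=P^2$ terms of the explicit-formula bound \eqref{eq:log-upper} are not a harmless constant. They equal $\Rea Q_B(\chi)+o(1)$ with $Q_B(\chi)=\frac12\sum_{\deg P\le B}\chi(P)^2/|P|$, and $\Rea Q_B(\chi)$ is not uniformly bounded: a priori only $|Q_B|\le\frac12(1+\log B)\asymp\log\log m$. With your mollifier $\prod_jE_{K_j}(-\mathcal P_j)$, the claimed good-set bound $|L_q(1/2,\chi)|\ll\prod_j|E_{K_j}(\mathcal P_j(\chi))|$ is therefore false, and $|LM|$ is not uniformly bounded there. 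The paper adds the same $Q_B$ to the first block $\mathcal P_{0,N_u}$ at every scale. It then cancels between scales and is covered by the thresholds. This also breaks your Step 1: a mollifier with local series $e^{-x}$ produces the diagonal series $e^{-x}/(1-x)$, not \eqref{eq:GZlocal}. The $-x^2/2$ there is exactly the square correction, so you would not obtain $A_{\ell,q,G}$. Likewise, the blocks must carry the explicit-formula weights $a_{N_u}(P)$, and the good set must control every scale $u\ge r$. This is because an exceptional block $\mathcal B_n$ is estimated at the shorter scale $N_{n-1}$.

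\textbf{Cauchy--Schwarz costs a square and is not needed.} Let $C=e^{\theta_J^{-1}+3}$ bound $|LM|$ on $\mathcal T$. Your ratio then gives at best about $A_{\ell,q,G}^2/C^2$. Since $\theta_J$ comes arbitrarily close to $e^{-52}$, this is roughly $e^{-2e^{52}}$ in the liminf, below the stated $A_{\ell,q,G}/(2e^{e^{52}+3})$.

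The paper argues linearly:
$\#\{\chi\in\mathcal T:L_q(1/2,\chi)\ne0\}\ge C^{-1}\bigl|\sum_{\mathcal T}LM\bigr|\ge C^{-1}\bigl(\bigl|\sum_\chi LM\bigr|-\sum_{\chi\notin\mathcal T}|LM|\bigr)$.
So only an absolute first moment over the exceptional sets is required. The choice $M_G=\prod_rE_{\ell_r}(-\mathcal P_{r,N_J}/2)^2$ makes $|M_G|$ a modulus square. Then $|L||M_G|$ on $\mathcal B_n$ is majorized by a nonnegative polynomial, which is averaged by Lemma~\ref{lem:orthog}.

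This matters numerically. On $\mathcal B_n$ the shorter scale costs $e^{1/\theta_{n-1}}=e^{e/\theta_n}$. The moment saving with $s_n=\lfloor1/(8\theta_n)\rfloor$ beats it only by the margin $\frac{51}{18}-e-\frac{\log2}{9}\approx0.038$ per $1/\theta_n$. That margin is what forces $\theta_J\le e^{-51}$. The loss $e^{e^{52}}$ is therefore the single factor $e^{2m/N_J}$, not an accumulation over about $e^{50}$ levels, since $J=O(\log\log m)$. For $|L|^2$ the cost becomes $e^{2e/\theta_n}$, which this choice of $s_n$ cannot absorb. You would have to push $s_n$ to nearly the full degree budget and redo the numerics, and even then you would still face the squared loss in the final constant.
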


The bounds are small; we do not determine the actual non-vanishing
proportions or assert that these proportions converge.

For the full non-Kummer cubic family, David, Florea, and
Lal\'in~\cite[Remark~1.8]{DFL21} obtained the explicit lower bound
$0.4718 e^{-e^{182}}$. Since
\begin{equation}\label{intro:comparison}
 \frac{1}{2 e^{e^{52}+3}}
 >0.4718 e^{-e^{182}},
\end{equation}
Theorem~\ref{thm:main-non-vanishing} therefore improves their stated
constant for the same family.

The quartic and sextic proofs use the following uniform moment
formula. For $\ell\in\{4,6\}$, let $h_\ell^\ast$ be the unique monic
polynomial with prime exponents in $\{0,\ldots,\ell-1\}$ such that
$hh_\ell^\ast$ is an $\ell$th power. Let $c_q$ be the positive
family-counting constant in \eqref{eq:cq}, and set
\begin{equation}\label{intro:moment-main}
 \mathcal M_{\ell,q}(h)
 =\frac{c_q}{\sqrt{|h_\ell^\ast|}}
    \sum_{A\text{ monic}}\frac{\rho(hA)}{|A|^{\ell/2}}.
\end{equation}

\begin{theorem}[Uniform twisted first moments]\label{thm:main-moments}
For $\ell=4$ or $6$, fix an odd prime power $q\equiv-1\pmod\ell$,
and put $g=(\ell-1)(m-1)$. For every $\varepsilon>0$, uniformly in
monic $h$ with $\deg h\le m/10$,
\begin{align}\label{intro:twisted-moments}
 \sum_{\chi\in\mathcal C_\ell(g)}\chi(h)L_q(1/2,\chi)
 ={}&q^{2m}\mathcal M_{\ell,q}(h)\notag\\
 &+O_{q,\varepsilon}\!\left(
 q^{(\alpha_\ell+\varepsilon)m}
 |h_\ell^\ast|^{\beta_\ell+\varepsilon}|h|^\varepsilon\right),
\end{align}
where
\[
 (\alpha_4,\beta_4)=\left(\frac95,\frac1{10}\right),
 \qquad
 (\alpha_6,\beta_6)=\left(\frac{13}{7},\frac1{14}\right).
\]
\end{theorem}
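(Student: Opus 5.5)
We follow the approach of David, Florea and Lal\'in for the non-Kummer cubic first moment~\cite{DFL22}, adapted to order $\ell\in\{4,6\}$ through the parameterization \eqref{intro:family}.

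\textbf{Approximate functional equation.} We start from the approximate functional equation for $L_q(1/2,\chi)$. Since every $\chi\in\mathcal C_\ell(g)$ is even with $L$-polynomial of degree $2m-2$, it reads
\[
 L_q(1/2,\chi)=\sum_{\deg f\le A}\frac{\chi(f)}{|f|^{1/2}}+\epsilon(\chi)\sum_{\deg f\le 2m-3-A}\frac{\bar\chi(f)}{|f|^{1/2}}.
\]
Here $\epsilon(\chi)$ is a normalized Gauss sum of $\chi_F$ over the conductor $FF^\sigma$, and $A$ is a parameter to be optimized later.

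\textbf{Principal sum.} We insert $\chi_F(hf)$ and sum over $F\in\mathcal A_{Q}$ square-free with $(F,F^\sigma)=1$ and $\deg F=m$. For $a\in\F_q[T]$, $\chi_F(a)=(a/F)_\ell$, and this character sum is controlled by $\ell$th power reciprocity in $\F_Q[T]$.
\begin{itemize}[label={}]
 \item When $hf$ is an $\ell$th power, the sum is a pure counting problem. Write $f=h_\ell^\ast A^\ell$. Counting $F$ coprime to $hA$ with the conditions above gives local factors, which should be exactly $\rho_P$: primes of odd degree stay inert in $\F_Q$, while primes of even degree split into two conjugate primes, each of which must be avoided. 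This yields $q^{2m}c_q\rho(hA)|h_\ell^\ast|^{-1/2}|A|^{-\ell/2}$. Completing the sum over $A$, with tail error $q^{2m}q^{-A(\ell-2)/(2\ell)}$, gives $q^{2m}\mathcal M_{\ell,q}(h)$.
 \item When $hf$ is not an $\ell$th power, we get a nontrivial character sum over $F$. Using a Pólya--Vinogradov/Weil bound after removing square-freeness by Möbius inversion, it is $O(q^{(1+\varepsilon)m}|hf|^{1/2+\varepsilon})$ in total, i.e.\ $q^{m}q^{A/2}|h|^{1/2}$ after summing.
\end{itemize}

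\textbf{Dual sum: the main obstacle.} The dual term requires an averaged Gauss sum estimate
\[
 \sum_F \bar\chi_F(hf)\,\epsilon(\chi_F).
\]
Because we are in the non-Kummer setting, $\epsilon(\chi_F)$ is a Gauss sum attached to the character $(\cdot/F)_\ell$ restricted to $\F_q[T]$. As in~\cite{DFL22}, we lift it to a Gauss sum of order $\ell$ over $\F_Q[T]$ evaluated at $F$ times a twist. The average over $F$ is then expressed through the generating series $\sum_F g_\ell(hf,F)|F|^{-s}$. We invoke the known analytic properties of these series (Kubota/Hoffstein-type results, with Patterson-style meromorphic continuation over function fields), including:
\begin{itemize}[label={}]
 \item a possible pole at $s=1+1/\ell$,
 \item a convexity-type bound in vertical strips, uniform in the twisting parameter.
\end{itemize}
The conductor conditions \eqref{intro:conductor} force $\chi^j$ to be primitive, so that only order-$\ell$ Gauss sums appear. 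For $\ell=6$, the Gauss sum factors as a product of cubic and quadratic parts, and we must verify that the quadratic component produces no secondary main term. If a pole contribution survives, it has size $q^{(1+1/\ell)m}$ times a power of $|h_\ell^\ast|$; we expect it to be absorbed into the error term. The shifted-contour remainder gives roughly
\[
 q^{(3/2+\varepsilon)m}\,q^{-A/2}\,|h_\ell^\ast|^{\theta}.
\]

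\textbf{Optimization.} We balance three sizes:
\begin{itemize}[label={}]
 \item $q^{2m-A(\ell-2)/(2\ell)}$ from the principal-sum tail,
 \item $q^{m+A/2}$ from the nonprincipal terms,
 \item the dual error above.
\end{itemize}
Choosing $A$ as a suitable fraction of $m$ should yield the exponents $\alpha_4=9/5$ and $\alpha_6=13/7$; the $|h_\ell^\ast|^{\beta_\ell}$ factor comes from the $h$-dependence of the dual bound. The restriction $\deg h\le m/10$ keeps all these terms uniform.

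I expect the Gauss-sum average in the dual term to be the hardest step, specifically proving uniform bounds in $h$ for the lifted non-Kummer Gauss-sum series.
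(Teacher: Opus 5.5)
\textbf{There is a genuine gap in the dual term.} Your route (approximate functional equation, diagonal count, Gauss-sum average on the dual side) differs from the paper's. The paper never uses an approximate functional equation. It continues $A_\ell(s,w;h)$ to three tubes (character expansion, Weil bound, dual expansion), fills their convex hull by Bochner's theorem, interpolates the $h$-dependence by three lines, and moves the Perron circle to $|u|=q^{-9/5-\varepsilon}$, resp.\ $q^{-13/7-\varepsilon}$. Your route can be made to work, but as written the step that determines $(\alpha_\ell,\beta_\ell)$ is missing: the non-Kummer condition $(F,F^\sigma)=1$ on the dual side.

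After writing $\omega(\chi_F)=q^{-m}\kappa_q^{m}G_Q(1,F)$ (with $\xi_q$ when $\ell=6$) and $\chi_F(h)\overline{\chi_F(f)}=1_{(F,h)=1}\overline{\chi_F(fh_\ell^\ast)}$, you must sieve admissibility by $\sum_{D\mid F,\,D\in\Aq}\mu_q(D)$. Reciprocity then produces $G_Q(r,D)G_Q(rD^2,E)$ for $\ell=4$ and $G_6(r,D)G_6(rD^4,K)$ for $\ell=6$, where $r=fh_\ell^\ast$. Coprimality to all of $h$, including its $\ell$th-power part, must stay inside the Gauss series. Feeding the bound of \cite{DFL25}, $|rD^2|_2^{3/4-\alpha/2}$ on $|v|=Q^{-\alpha}$, into the $D$-sum gives $\sum_d q^{(5-4\alpha)d}$. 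In the sextic case, $|rD^4|_2^{3/4-\beta/2}$ gives $\sum_d q^{(8-6\beta)d}$. So you are confined to $\alpha>5/4$, resp.\ $\beta>4/3$. This is exactly the edge $\sigma_0+\tau>5/4$ (resp.\ $4/3$) of the paper's region $S_3$. It also means no Gauss-sum pole is ever crossed, so your discussion of a surviving polar term is unnecessary.

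\textbf{Corrected sizes and balancing.} Write $X$ for your cutoff $A$. The dual side is then $\ll q^{(3+\varepsilon)m-3X/4}|h_4^\ast|^{1/4}$ for $\ell=4$ and $\ll q^{(3+\varepsilon)m-2X/3}|h_6^\ast|^{1/6}$ for $\ell=6$. It is not $q^{(3/2+\varepsilon)m-X/2}|h_\ell^\ast|^\theta$.

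For the off-diagonal principal terms, P\'olya--Vinogradov with $|hf|^{1/2}$ sums to $q^{m+X}|h|^{1/2}$, not $q^{m+X/2}$. Balanced against the dual side this destroys the power saving, and it is incompatible with the factor $|h|^\varepsilon$ in the statement. You need the Riemann hypothesis over $\F_Q$, which gives $q^{(1+\varepsilon)m}|hf|^\varepsilon$ per $f$, as in the paper's bound on $S_1$.

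With your listed sizes the optimum would be $q^{5m/3}$ and $q^{8m/5}$, not $q^{9m/5}$ and $q^{13m/7}$; this already shows the sizes are off. With the corrected sizes, balance the dual bound against $q^{(1+\varepsilon)m+X/2}$ at $X=(8m+\deg h_4^\ast)/5$, resp.\ $X=(12m+\deg h_6^\ast)/7$. Up to $\varepsilon$-losses this gives exactly $q^{9m/5}|h_4^\ast|^{1/10}$ and $q^{13m/7}|h_6^\ast|^{1/14}$. The principal tails $q^{2m-X/4}$ and $q^{2m-X/3}$ are then smaller. So, once repaired, your route recovers the theorem from the same inputs, and the bottleneck is the same $D$-sieve. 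Finally, you do not need to split the sextic Gauss sum into cubic and quadratic parts: the bounds of \cite{DFL25} apply directly to the order-six series.
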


The more precise statements appear in Theorems~\ref{input:first}
and~\ref{sextic:tfm}. Taking $h=1$ in the sextic formula gives
Corollary~\ref{sextic:first-moment}, an asymptotic for the untwisted
first moment with an error of size $q^{(13/7+\varepsilon)m}$.
The cubic argument uses \cite[Proposition~8.1]{DFL21},
restated in Proposition~\ref{cubic:twisted}. Its error term already
suffices for the mollifier used here. The improvement in the cubic
constant comes from the truncation parameters and the exceptional-set
estimates in Section~\ref{sec:cubic}.

\subsection{Method of proof}

The twisted-moment calculation extends earlier applications of
double Dirichlet series. Over the rationals, Gao and Zhao~\cite{GZ24}
obtained smoothed asymptotics for primitive cubic, quartic, and
sextic families with full local order at every ramified prime under
the generalized Lindel\"of hypothesis. Over function fields,
Hong and Fang~\cite{HF} evaluated a non-Kummer cubic first moment,
and Hong~\cite{Hong} applied this method
to the quartic family in \eqref{intro:conductor}, obtaining an error
of size $q^{(3/5+\varepsilon)g}$. We retain the dependence on a short
twist and extend the construction to sextic characters.

The required Gauss sum generating-series estimates come from
David, Florea, and Lal\'in~\cite{DFL25}, applied over the quadratic
extension of the constant field. Character averages, the functional
equation, and a restricted Gauss sum series give three domains of
holomorphy. Bochner's tube theorem continues the pole-removed series
to their convex hull; uniform interpolation and coefficient
extraction then give Theorem~\ref{thm:main-moments}.

For non-vanishing, we adapt the truncated-exponential framework
of Curran~\cite[Section~1 and Lemma~2.2]{Curran24} for shifted zeta
moments under the Riemann hypothesis, and its Dirichlet analogue
under the generalized Riemann hypothesis due to Gao and
Zhao~\cite[Sections~2.7, 3 and~5.9]{GZ}. The lower-bound principles underlying this
approach were developed by Radziwi\l\l{} and
Soundararajan~\cite{RS13} and Heap and Soundararajan~\cite{HS22}.
The prime-block truncations and control of exceptional sets also
occur in Harper's method for sharp conditional upper
bounds~\cite{Harper13}.

Our mollifier is a product of squared truncated exponentials of
prime-block sums, with a short prime-square correction in the first
block. We partition the family according to the sizes of these sums.
On the good set, the truncated exponentials approximate the
corresponding exponentials and give a uniform pointwise bound for
the mollified central value. On the exceptional sets, we apply
logarithmic bounds from Weil's theorem at a shorter scale. The
prime-square correction is fixed across scales, so it cancels when
the shorter prime sum is paired with the mollifier. High moments
of the first large block control the exceptional contribution;
the initial exceptional set is handled separately by a rough
fourth-moment bound and its small cardinality.

Uniformity in the twist permits summation against the mollifier
coefficients, giving the full mollified first moment. Subtracting
the absolute contribution of the exceptional sets leaves a lower
bound for the modulus of the good-set average. The pointwise bound
on this set then gives Theorem~\ref{thm:main-non-vanishing} by the
triangle inequality. All three orders use the parameters in
\eqref{eq:GZparameters}, for which $e^{-52}<\theta_J\le e^{-51}$
and $L/m<0.005248338$. The existing twisted first-moment estimates
give a power saving for this mollifier length in each case.
The finite-orthogonality and exceptional-set estimates also hold
with these parameters. On the good set we have
$e^{\theta_J^{-1}+3}<e^{e^{52}+3}$, and its first moment retains
at least half of $c_qA_{\ell,q,G}q^{2m}$. This gives the common
denominator in \eqref{intro:non-vanishing}; the arithmetic factor
$A_{\ell,q,G}$ records the dependence on the character order.

Section~\ref{sec:prelim} records the quartic parameterization and
preliminary estimates. Section~\ref{sec:first} proves the quartic
twisted first moment, and Section~\ref{sec:GZ} establishes its explicit
non-vanishing bound. Section~\ref{sec:sextic} treats the sextic case.
Section~\ref{sec:cubic} applies the same truncation parameters
to the published cubic inputs and proves the improvement
in \eqref{intro:comparison}.

\section{Preliminaries}\label{sec:prelim}

Throughout Sections~\ref{sec:prelim}--\ref{sec:GZ}, $q$ is a fixed odd
prime power with $q\equiv3\pmod4$.

\subsection{Notation and the family}

Write $Q=q^2$.  Let $\Aq$ and $\AQ$ be the monic polynomials over
$\F_q$ and $\F_{q^2}$, respectively.  We use
\[
 |f|=q^{\deg f}\quad(f\in\Aq),
 \qquad |F|_2=q^{2\deg F}\quad(F\in\AQ),
 \qquad \zeta_q(s)=\frac1{1-q^{1-s}}.
\]
Unless otherwise stated, $P$ denotes a monic irreducible polynomial
over $\F_q$.  The map $\sigma$ raises the coefficients of a polynomial
over $\F_{q^2}$ to the $q$th power.  All asymptotic statements keep $q$ fixed. We also use
$\zeta_Q(s)=(1-Q^{1-s})^{-1}$ and, in the polynomial variable,
$\zeta_Q(u)=(1-Qu)^{-1}$, as in \cite{Hong}; the argument specifies
which convention is intended.

Fix an identification of the fourth roots of unity in $\F_{q^2}$ with
those in $\C$.  For a monic irreducible $\pi\in\F_{q^2}[T]$, define
$\left(\frac{f}{\pi}\right)_4$ by the residue class of
$f^{(Q^{\deg\pi}-1)/4}$, and set it equal to zero if $\pi\mid f$.
Extend the symbol multiplicatively in its denominator.

For quartic characters, the conductor condition is
\begin{equation}\label{eq:conductor-condition}
 \cond(\chi^2)=\cond(\chi).
\end{equation}
For $m\ge1$, put $g=3m-3$ and define
\begin{equation}\label{eq:family}
 \Cfour=
 \left\{
 \chi_F=\left.\left(\frac{\cdot}{F}\right)_4\right|_{\F_q[T]}:
 F\in\AQ\text{ monic square-free},\ \deg F=m,\ (F,F^\sigma)=1
 \right\}.
\end{equation}
The coprimality condition is equivalent to requiring that no nonconstant
polynomial in
$\F_q[T]$ divides $F$.  It excludes both primes fixed by $\sigma$ and
the simultaneous occurrence of a pair $\pi,\pi^\sigma$.

\begin{lemma}\label{lem:family}
The parameterization \eqref{eq:family} is one-to-one onto the primitive
order-four characters satisfying \eqref{eq:conductor-condition} and
having conductor degree $2m$.  For these characters,
\[
 \cond(\chi_F)=FF^\sigma,\qquad
 \chi_{F^\sigma}=\overline{\chi_F}.
\]
They are even.  The associated cyclic quartic extension has genus
$g=3m-3$, and the completed $L$-polynomial has degree $2m-2$.
\end{lemma}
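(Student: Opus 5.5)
The plan is to relate quartic characters of $\F_q[T]$ to quartic residue symbols over $\F_{q^2}[T]$. Since $q\equiv3\pmod4$, the field $\F_q$ has no primitive fourth root of unity, while $\F_{q^2}$ does.

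\textbf{Local description.} For a monic irreducible $P\in\F_q[T]$:
\begin{itemize}[leftmargin=*]
\item If $\deg P$ is odd, then $P$ stays irreducible over $\F_{q^2}$. The group $(\F_q[T]/P)^\times$ has order $q^{\deg P}-1\equiv2\pmod4$, so it carries no character of order four. Hence every character of order dividing four restricts to order at most two there. Condition \eqref{eq:conductor-condition} says $P\mid\cond(\chi)$ implies $P\mid\cond(\chi^2)$, which then forces $P\nmid\cond(\chi)$.
\item If $\deg P$ is even, then $P=\pi\pi^\sigma$ over $\F_{q^2}$ with $\deg\pi=\deg P/2$, and $\F_q[T]/P\cong\F_{q^2}[T]/\pi$. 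The quartic characters of this cyclic group are exactly $\left(\frac{\cdot}{\pi}\right)_4^{\pm1}=\left(\frac{\cdot}{\pi}\right)_4$ or $\left(\frac{\cdot}{\pi^\sigma}\right)_4$ restricted to $\F_q[T]$. The identification with the inverse uses $f^\sigma=f$ for $f\in\F_q[T]$, together with the fact that $\sigma$ acts on $\mu_4$ by inversion because $q\equiv-1\pmod4$.
\end{itemize}

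\textbf{Bijection.} A primitive character of order four with \eqref{eq:conductor-condition} has squarefree conductor. It is therefore a product of local characters of exact order four at primes $P$ of even degree. Choosing at each such $P$ one of $\pi,\pi^\sigma$ gives a unique $F$ that is squarefree, satisfies $(F,F^\sigma)=1$, and has $FF^\sigma=\cond\chi$; conversely each such $F$ gives such a character. Then $\deg F=m$ is equivalent to conductor degree $2m$, and the same local identity gives $\chi_{F^\sigma}=\overline{\chi_F}$. Injectivity holds because the local components determine $F$ prime by prime. Exact order four, rather than two, holds because each local component has order four, and we need $m\ge1$ so the family is nonempty.

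\textbf{Evenness.} For $a\in\F_q^\times$ and $\pi$ of degree $d$,
\[
\left(\tfrac{a}{\pi}\right)_4=a^{(Q^d-1)/4}=\bigl(a^{q-1}\bigr)^{(q+1)(Q^d-1)/(4(Q-1))}=1,
\]
since $4\mid q+1$ and $a^{q-1}=1$. So $\chi_F$ is trivial on constants.

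\textbf{Genus and $L$-polynomial degree.} I would use the Riemann--Hurwitz formula for cyclic tame extensions: $2g-2=4(-2)+\sum_P\deg P\,(4-1)$ when every ramified prime is totally ramified, since ramification is tame ($q$ odd) and full order four at each ramified prime. This gives $2g-2=-8+3\cdot 2m$, so $g=3m-3$. Here $\infty$ is unramified because $\chi$ is even; more precisely it splits, so there is no contribution at infinity.

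For the $L$-polynomial, $L(u,\chi)$ has degree $\deg\cond-1=2m-1$ for a nontrivial character. For even characters there is a trivial zero factor $(1-u)$, so the completed polynomial has degree $2m-2$. Alternatively, the zeta function of the extension factors as $\zeta_q\prod_{j=1}^3L(u,\chi^j)$; here $\chi^2$ has conductor $FF^\sigma$ as well, so the total degree $3(2m-2)=2g$ is consistent.

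The main obstacle is the careful local identification at even-degree primes, in particular checking that restriction of $\left(\frac{\cdot}{\pi}\right)_4$ to $\F_q[T]/P$ has exact order four and that $\sigma$ inverts it. Verifying surjectivity of $(\F_{q^2}[T]/\pi)^\times$-characters via the isomorphism handles this. The genus computation can also simply cite \cite[Lemma~3.6]{Lin}.
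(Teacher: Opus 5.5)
Your proposal is correct and follows essentially the same route as the paper. Both arguments use the local analysis separating odd-degree primes from split even-degree primes $P=\pi\pi^\sigma$ (with the choice of $\pi$ or $\pi^\sigma$), the Chinese remainder theorem for the bijection, evenness from $q-1\mid(Q^d-1)/4$, Riemann--Hurwitz with every finite ramified prime totally ramified (total degree $2m$) and $\infty$ unramified, and conductor degree minus two for the completed $L$-polynomial. You also spell out details the paper leaves implicit, notably that $\sigma$ inverts $\mu_4$ because $q\equiv-1\pmod 4$, which is what gives $\chi_{F^\sigma}=\overline{\chi_F}$.
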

\begin{proof}
A base-field prime admits a local character of exact order four only
when its degree is even.  It then splits as $P=\pi\pi^\sigma$ over
$\F_{q^2}$, and the two choices $\pi$ and $\pi^\sigma$ give the two
local characters of exact order four.  Condition
\eqref{eq:conductor-condition} excludes local characters of order two
at ramified primes.  The Chinese remainder theorem now gives the claimed bijection; see
\cite[Lemma~3.6]{Lin}.

For a constant $a\in\F_q^\times$, its quartic residue symbol at a prime
of $\F_{q^2}[T]$ is one, since $q-1$ divides
$(q^{2d}-1)/4$.  Thus these characters are even.  Each finite ramified
prime has ramification index four and infinity is unramified.
Riemann--Hurwitz gives $2g-2=-8+3(2m)$, hence $g=3m-3$.
The degree of the completed Dirichlet $L$-polynomial is the conductor
degree minus two.
\end{proof}

\begin{remark}
The parameters $F$ and $F^\sigma$ give distinct conjugate characters.
The family \eqref{eq:family} includes only characters satisfying
\eqref{eq:conductor-condition}. Its genus parameter is $3m-3$,
whereas the completed $L$-polynomial has degree $2m-2$.
\end{remark}

\subsection{Counting the family}

\begin{lemma}\label{lem:count}
For every $\varepsilon>0$,
\begin{equation}\label{eq:count}
 |\Cfour|=c_q q^{2m}+O_{q,\varepsilon}(q^{(1+\varepsilon)m}),
\end{equation}
where
\begin{equation}\label{eq:cq}
 c_q=
 \prod_{\deg P\ \text{odd}}(1-|P|^{-2})
 \prod_{\deg P\ \text{even}}
 (1+2|P|^{-1})(1-|P|^{-1})^2>0.
\end{equation}
Moreover,
\begin{equation}\label{eq:cq-lower}
 c_q\ge\zeta_q(2)^{-3}=(1-q^{-1})^3.
\end{equation}
\end{lemma}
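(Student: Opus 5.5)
By Lemma~\ref{lem:family}, $|\Cfour|$ equals the number of monic square-free $F\in\AQ$ of degree $m$ with $(F,F^\sigma)=1$. We count these with a generating series and read off the coefficient by contour integration. Sort the primes $\pi$ of $\F_{q^2}[T]$ by the base prime $P$ below them.
\begin{itemize}
\item If $\deg P$ is odd, then $P$ stays inert and $\pi=P$ is fixed by $\sigma$. Such $\pi$ are excluded from $F$, and the local factor is $1$.
\item If $\deg P=2d$ is even, then $P=\pi\pi^\sigma$ with $\deg\pi=d$. At most one of $\pi,\pi^\sigma$ may divide $F$, and only to the first power. The local factor is $1+2u^{d}$.
\end{itemize}
Hence
\[
 \mathcal Z(u)=\sum_{F}u^{\deg F}=\prod_{\deg P\text{ even}}\bigl(1+2u^{\deg P/2}\bigr).
\]
With $u=q^{-2s}$, this matches $\prod_{\deg P\text{ even}}(1+2|P|^{-s})$ in base-field terms.

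Next compare $\mathcal Z(u)$ with $\zeta_Q(u)=(1-Qu)^{-1}=\prod_\pi(1-u^{\deg\pi})^{-1}$. This product factors as
\[
 \prod_{\deg P\text{ odd}}(1-u^{\deg P})^{-1}\prod_{\deg P\text{ even}}(1-u^{\deg P/2})^{-2}.
\]
Define $\mathcal H(u)=\mathcal Z(u)/\zeta_Q(u)$. Its local factors are $(1-u^{\deg P})$ at odd-degree $P$ and $(1+2v)(1-v)^2=1-3v^2+2v^3$ at even-degree $P$, where $v=u^{\deg P/2}$. Every local factor is $1+O(|u|^{\deg P})$, and the number of $P$ of degree $n$ is at most $q^n/n$. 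So $\mathcal H(u)$ converges absolutely and is bounded for $|u|<q^{-1-\varepsilon'}$, that is, for $|Qu|<q^{1-\varepsilon'}$.

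Now extract coefficients. By Cauchy's formula,
\[
 |\Cfour|=\frac1{2\pi i}\oint\frac{\mathcal H(u)}{(1-Qu)u^{m+1}}\,du .
\]
Shift the contour from a small circle to $|u|=q^{-1-\varepsilon'}$. The only pole crossed is the simple pole at $u=Q^{-1}=q^{-2}$. Its residue contributes $\mathcal H(q^{-2})Q^{m}=\mathcal H(q^{-2})q^{2m}$. On the new circle, $\mathcal H$ is bounded and $|1-Qu|\gg1$, so the remaining integral is $O_{q,\varepsilon}(q^{(1+\varepsilon)m})$.

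It remains to identify the constant. At $u=q^{-2}$, the odd-degree factors become $1-|P|^{-2}$, since $u^{\deg P}=|P|^{-2}$. At even-degree $P$ we have $v=q^{-\deg P}=|P|^{-1}$, so the factors become $(1+2|P|^{-1})(1-|P|^{-1})^2$. Hence $\mathcal H(q^{-2})=c_q$, which proves \eqref{eq:count}. Each local factor of $c_q$ lies in $(0,1]$ and the product converges absolutely, so $c_q>0$.

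For \eqref{eq:cq-lower}, compare factor by factor with
\[
 \zeta_q(2)^{-3}=\prod_P(1-|P|^{-2})^{3}.
\]
For odd-degree $P$, clearly $1-|P|^{-2}\ge(1-|P|^{-2})^3$. For even-degree $P$, write $x=|P|^{-1}$. We need
\[
 (1+2x)(1-x)^2\ge(1-x^2)^3=(1-x)^3(1+x)^3 .
\]
Dividing by $(1-x)^2>0$, this becomes $1+2x\ge(1-x)(1+x)^3$. The right side expands to $1+2x-2x^3-x^4$, so the inequality holds. Multiplying the local inequalities gives $c_q\ge(1-q^{-1})^3$.

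The main point needing care is the analytic continuation in the middle step. One must make sure that the local factors of $\mathcal H$ are genuinely $1+O(|u|^{\deg P})$. This is what allows the contour to move past $|u|=q^{-1}$ and produces the error exponent $1+\varepsilon$.
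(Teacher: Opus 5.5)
Your proof is correct and follows the paper's argument. You use the same Euler product $\zeta_Q(u)\prod_{\deg P\text{ odd}}(1-u^{\deg P})\prod_{\deg P\text{ even}}(1+2u^{\deg P/2})(1-u^{\deg P/2})^2$, the same contour shift to $|u|=q^{-1-\varepsilon}$ past the simple pole at $u=q^{-2}$, and the same prime-by-prime inequality for \eqref{eq:cq-lower}. The only difference is that you read the local factors directly from how $P$ splits in $\F_{q^2}[T]$, whereas the paper reaches the same product by inserting the M\"obius identities for $(F,F^\sigma)=1$ and square-freeness, a form it reuses in the twisted series \eqref{eq:Aexpand}.
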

\begin{proof}
Write $\mu_q$ and $\mu_Q$ for the M\"obius functions over the two
fields. As in \cite[proof of Corollary~1.1]{Hong}, insert
\[
 1_{(F,F^\sigma)=1}=\sum_{\substack{D\in\Aq\\D\mid F}}\mu_q(D),
 \qquad \mu_Q^2(F)=\sum_{A^2\mid F}\mu_Q(A).
\]
For $|u|<q^{-2}$, the counting series, including $F=1$, is therefore
\begin{align}\label{eq:B}
 \sumF u^{\deg F}
 &=\sum_{D\in\Aq}\mu_q(D)u^{\deg D}
   \sum_{\substack{A,G\in\AQ\\(AG,D)=1}}
       \mu_Q(A)u^{2\deg A+\deg G}\notag\\
 &=\frac{\zeta_Q(u)}{\zeta_Q(u^2)}
   \prod_P\left(1-u^{\deg P}
       \prod_{\pi\mid P}(1+u^{\deg\pi})^{-1}\right)\notag\\
 &=\zeta_Q(u)
   \prod_{\deg P\text{ odd}}(1-u^{\deg P})
   \prod_{\deg P\text{ even}}
       (1+2u^{\deg P/2})(1-u^{\deg P/2})^2.
\end{align}
Here and below, $\pi$ denotes a monic prime over $\F_Q$.
The product multiplying $\zeta_Q(u)$ converges normally for
$|u|<q^{-1}$, since each local factor is $1+O(|u|^{\deg P})$.
Its value at $u=q^{-2}$ is $c_q$. Perron's formula and a shift to
$|u|=q^{-1-\varepsilon}$, with $0<\varepsilon<1$, give
\eqref{eq:count}. Finally, at each even-degree prime,
\[
 (1+2|P|^{-1})(1-|P|^{-1})^2\ge(1-|P|^{-2})^3;
\]
the same lower bound holds for the odd-degree factor. Multiplication
over $P$ gives \eqref{eq:cq-lower}.
\end{proof}

We use the coprimality factors $\rho_P$ and $\rho(H)$ from
\eqref{eq:rho}.
For a fixed $H$, inserting $(F,H)=1$ in the counting series removes
precisely the even-degree factors attached to $P\mid H$.  After multiplication by $1-q^2u$, its value at $u=q^{-2}$ is
$c_q\rho(H)$.

\subsection{The functional equation and Gauss sums}

For a primitive nonprincipal character, write
\[
 \Lpol(v,\chi)=\sum_{f\in\Aq}\chi(f)v^{\deg f},
 \qquad \Lq(s,\chi)=\Lpol(q^{-s},\chi).
\]
The polynomial $\Lpol(v,\chi)$ has a factor $1-v$ for the even
characters in \eqref{eq:family}.  With $|\omega(\chi)|=1$, its functional
equation is
\begin{equation}\label{eq:FE}
 \Lq(s,\chi)=
 \omega(\chi)q^{(m-1)(1-2s)}
 \frac{1-q^{-s}}{1-q^{s-1}}\Lq(1-s,\bar\chi).
\end{equation}
The reciprocal zeros of the completed $L$-polynomial have modulus
$q^{1/2}$. In particular, for fixed $\varepsilon>0$,
\begin{equation}\label{eq:lindelof}
 |\Lq(s,\chi)|\ll_{q,\varepsilon}q^{\varepsilon\deg\cond(\chi)}
 \qquad(\Rea s\ge1/2);
\end{equation}
see \cite[Lemmas~3.2--3.3]{Lin} for the bounds used in the generating
series arguments.

Choose the standard additive character determined by the coefficient of
$T^{-1}$ and define
\[
 G_Q(V,F)=\sum_{a\bmod F}\left(\frac{a}{F}\right)_4 e_Q(aV/F).
\]
When the arguments are coprime, the elementary identities are
\begin{align}
 G_Q(aV,F)&=\overline{\chi_F(a)}G_Q(V,F),\label{eq:Gshift}\\
 G_Q(V,AB)&=\chi_A(B)^2G_Q(V,A)G_Q(V,B).
 \label{eq:Gprod}
\end{align}
The reciprocity sign is one since $Q\equiv1\pmod8$. We apply
\cite[Theorem~5.3 and Corollary~5.4]{DFL25} over $\F_Q$.
Section~\ref{sec:first} establishes the root-number normalization
and treats the additional coprimality conditions.

\subsection{Exact averages of short polynomials}

For an arbitrary monic $R\in\AQ$, square-free or not, use $\chi_R$ for
the quartic residue symbol restricted to $\Aq$.

\begin{lemma}\label{lem:orthog}
Let $A,B\in\Aq$ be monic and suppose
$\deg\rad(AB)\le m$.  Then
\begin{equation}\label{eq:orthog}
 q^{-2m}\sum_{\substack{R\in\AQ\\\deg R=m}}
 \chi_R(A)\overline{\chi_R(B)}
 =\begin{cases}
 \displaystyle\frac{\phi_Q(\rad(AB))}{|\rad(AB)|_2},
       &AB^3\text{ is a fourth power},\\[2mm]
 0,&\text{otherwise}.
 \end{cases}
\end{equation}
The radical and Euler function on the right may be evaluated after
factorization over $\F_{q^2}$.
\end{lemma}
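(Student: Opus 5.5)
The plan is to swap the roles of numerator and denominator via quartic reciprocity and then use orthogonality of additive-type sums over all monic $R$ of degree $m$ in $\F_{q^2}[T]$. Since $A,B\in\Aq$, factor them over $\F_Q$. By quartic reciprocity for $\F_Q[T]$ with $Q\equiv1\pmod 8$ (the sign is one, as noted above), for monic $R$ and monic $C$ we have $\left(\frac{C}{R}\right)_4=\left(\frac{R}{C}\right)_4$ up to a factor depending only on degrees and on $\F_Q^\times$ leading coefficients, which are trivial here since everything is monic and $(Q-1)/4$ is even. So
\[
\chi_R(A)\overline{\chi_R(B)}=\left(\frac{R}{A}\right)_4\overline{\left(\frac{R}{B}\right)_4}=\left(\frac{R}{AB^3}\right)_4 ,
\]
where the right side is the quartic residue symbol modulo $AB^3$ viewed in $\F_Q[T]$. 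This is a character $\psi$ of $(\F_Q[T]/\rad(AB))^\times$ extended by zero on non-units. It is principal exactly when the symbol $(\cdot/\pi)_4^{e_\pi}$ is trivial at each prime $\pi\mid AB$, where $e_\pi=v_\pi(A)+3v_\pi(B)$. That happens iff $4\mid e_\pi$ for all $\pi$, i.e.\ iff $AB^3$ is a fourth power in $\F_Q[T]$. Equivalently this holds in $\F_q[T]$, since the primes of $\F_Q[T]$ over a prime $P$ of $\F_q[T]$ all have the same valuation as $P$ has in $A$ and in $B$.

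The key step is then the exact count: for a character $\psi$ modulo $M=\rad(AB)$ (squarefree over $\F_Q$) with $\deg M\le m$, sum $\psi(R)$ over monic $R$ of degree $m$. As $R$ ranges over monic polynomials of degree $m\ge\deg M$, its residue modulo $M$ is equidistributed: each residue class is hit exactly $Q^{m-\deg M}$ times, because $R=R_0+MS$ with $S$ running over monic polynomials of degree $m-\deg M$. When $\deg M=m$ the class of $R$ is determined by $R-M$, which ranges over all polynomials of degree $<m$, so this is again exact. Hence the sum equals $Q^{m-\deg M}\sum_{a\bmod M}\psi(a)$. This is $Q^{m-\deg M}\phi_Q(M)$ if $\psi$ is principal and $0$ otherwise. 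Dividing by $q^{2m}=Q^m$ gives $\phi_Q(M)/|M|_2$, which matches \eqref{eq:orthog}. Note that $\deg M$ is the same over either field, so the hypothesis $\deg\rad(AB)\le m$ is exactly what the exact-equidistribution step needs.

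The main obstacle is stating reciprocity cleanly enough that no stray sign or unit factors appear. The care needed is that $A$ is a polynomial over $\F_q$ while the symbol is defined with denominator $R$ over $\F_Q$. I would first check the general law $\left(\frac{a}{b}\right)_4\left(\frac{b}{a}\right)_4^{-1}=(-1)^{\frac{Q-1}{4}\deg a\deg b}\,\mathrm{sgn}$-terms. With $Q\equiv1\pmod8$ and monic arguments all these factors are $1$, and the law applies to non-coprime pairs too, since both sides vanish. A second point to check is that the non-principal case really gives zero even when $\psi$ is imprimitive. This is fine because we sum over complete residue systems, not over primitive characters.
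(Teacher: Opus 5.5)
Your proposal is correct and follows the paper's proof step for step. Quartic reciprocity, with trivial sign because $Q\equiv1\pmod 8$ and all arguments are monic, turns the summand into $\left(\frac{R}{AB^3}\right)_4$, a character modulo $\rad(AB)$ that is principal exactly when $AB^3$ is a fourth power; the hypothesis $\deg\rad(AB)\le m$ then gives exact equidistribution of monic degree-$m$ polynomials over residue classes. You also spell out details the paper leaves implicit, namely the bijection $R=R_0+MS$ and the fact that the degree of the radical does not change under the constant-field extension.
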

\begin{proof}
By quartic reciprocity, the summand is the character
$\left(\frac{R}{AB^3}\right)_4$.  It is induced modulo $\rad(AB)$, and the degree
condition makes monic degree-$m$ polynomials equidistributed over its
residue classes.  The induced character is principal exactly when
all the exponents in $AB^3$ are divisible by four.  Its principal
average is the proportion of invertible classes; otherwise its
average is zero.
\end{proof}

Let $(X_P)_P$ be independent random variables with
\begin{equation}\label{eq:model}
 \Pr(X_P=0)=1-\gamma_P,\qquad
 \Pr(X_P=\xi)=\frac{\gamma_P}{4}\quad(\xi^4=1),
\end{equation}
where
\begin{equation}\label{eq:gamma}
 \gamma_P=\begin{cases}
 1-|P|^{-2},&\deg P\text{ odd},\\
 (1-|P|^{-1})^2,&\deg P\text{ even}.
 \end{cases}
\end{equation}
By Lemma~\ref{lem:orthog}, the average over all monic $R$ of degree
$m$ of a polynomial in $\chi_R(P)$ and their conjugates equals its
expectation under \eqref{eq:model}, provided each monomial satisfies
the stated degree condition. For nonnegative polynomials this
average bounds the corresponding normalized sum over admissible $F$.

\begin{lemma}\label{lem:model-bounds}
If $Z=\sum_Pc_PX_P$ and $V=\sum_P|c_P|^2$, then, for $t\ge0$ and
$p\ge2$,
\[
 \E e^{t\Rea Z}\le e^{t^2V/2},\qquad
 \norm{Z}_p\le4\sqrt{pV},\qquad
 \E e^{t|Z|}\le4e^{t^2V}.
\]
\end{lemma}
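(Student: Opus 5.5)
The plan is to treat $Z$ as a sum of independent, bounded, mean-zero complex random variables and apply Hoeffding's lemma to its real projections. Every other bound then follows from the resulting sub-Gaussian estimate. First assume that only finitely many $c_P$ are nonzero. The general case (with $V<\infty$) then follows by letting the support increase: the partial sums converge in $L^2$, hence almost surely along a subsequence, and Fatou's lemma transfers each inequality to the limit.

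\emph{Step 1 (the exponential bound).} By \eqref{eq:model}, each $X_P$ satisfies $|X_P|\le1$. Its mean is $\E X_P=\frac{\gamma_P}{4}\sum_{\xi^4=1}\xi=0$. Fix $\theta\in\mathbb R$ and put $Y_P=\Rea(e^{-i\theta}c_PX_P)$. Then $Y_P$ has mean zero and takes values in $[-|c_P|,|c_P|]$. Hoeffding's lemma, applied to an interval of length $2|c_P|$, gives
\[
\E e^{tY_P}\le e^{t^2(2|c_P|)^2/8}=e^{t^2|c_P|^2/2}.
\]
Independence lets us multiply over $P$, which yields $\E e^{t\Rea(e^{-i\theta}Z)}\le e^{t^2V/2}$ for every real $t$ and every $\theta$. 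Taking $\theta=0$ gives the first claim. The same bound holds for $\Im Z=\Rea(-iZ)$ and for $-\Rea Z$ and $-\Im Z$, since $V$ is unchanged when the coefficients are rotated.

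\emph{Step 2 (the exponential of $|Z|$).} Use the elementary inequality
\[
|Z|\le\sqrt2\max(|\Rea Z|,|\Im Z|).
\]
It gives
\[
e^{t|Z|}\le e^{\sqrt2t\Rea Z}+e^{-\sqrt2t\Rea Z}+e^{\sqrt2t\Im Z}+e^{-\sqrt2t\Im Z}.
\]
Each of the four terms has expectation at most $e^{(\sqrt2t)^2V/2}=e^{t^2V}$ by Step 1. Summing them gives $\E e^{t|Z|}\le4e^{t^2V}$.

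\emph{Step 3 (the moment bound).} This is the only step that needs care with constants; everything else is routine. Let $W$ denote either $\Rea Z$ or $\Im Z$. Optimizing Chebyshev's inequality in Step 1 gives the tail bound $\Pr(|W|>x)\le2e^{-x^2/(2V)}$. Integrating this tail,
\[
\E|W|^p=\int_0^\infty px^{p-1}\Pr(|W|>x)\,dx\le p(2V)^{p/2}\Gamma(p/2).
\]
For $p\ge2$ we have $\Gamma(p/2)\le(p/2)^{p/2}$, so $\E|W|^p\le p\,(pV)^{p/2}$. Taking $p$-th roots, $\norm{W}_p\le p^{1/p}\sqrt{pV}\le e^{1/e}\sqrt{pV}<1.5\sqrt{pV}$. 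Finally, Minkowski's inequality applied to $|Z|\le|\Rea Z|+|\Im Z|$ gives $\norm{Z}_p\le3\sqrt{pV}\le4\sqrt{pV}$, which is the second claim.
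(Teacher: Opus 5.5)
Your argument is correct and follows the paper's route. The paper likewise uses Hoeffding's lemma with independence for the exponential bound, integrates the resulting sub-Gaussian tail for $\Rea Z$ and $\Im Z$ to get the $L^p$ bound (you even obtain the constant $3$), and majorizes $e^{t|Z|}$ by four exponentials; the only cosmetic difference is that you use $|Z|\le\sqrt2\max(|\Rea Z|,|\Im Z|)$ where the paper uses $|Z|\le|\Rea Z|+|\Im Z|$ with the four sign choices $e^{t(\pm\Rea Z\pm\Im Z)}$, and both reduce to the rotated bound at parameter $\sqrt2\,t$.
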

\begin{proof}
Each real summand has mean zero and is bounded by $|c_P|$.
Hoeffding's lemma and independence give the exponential estimate.  Its tail bound, applied to the real and
imaginary parts, gives the displayed moment estimate.  Finally
$|Z|\le|\Rea Z|+|\Im Z|$ and summing over the four choices of signs gives
the last inequality.
\end{proof}
The degree condition in Lemma~\ref{lem:orthog} is imposed whenever these
estimates are transferred to character averages.

\section{Twisted first moment}\label{sec:first}

\subsection{The two-variable series and the fourth-power condition}

For a monic twist $h$, write uniquely
\begin{equation}\label{eq:hstar}
 h=C_1C_2^2C_3^3E^4,\qquad h^\ast=C_1^3C_2^2C_3,
\end{equation}
where the $C_i$ are square-free and pairwise coprime; no coprimality
between $E$ and the $C_i$ is imposed.  Thus $hh^\ast$ is a fourth power
and $|h^\ast|\le|h|^3$.  For $(F,h)=1$, we have $\chi_F(h)=\bar\chi_F(h^\ast)$.

For $\Rea s,\Rea w$ sufficiently large, define
\begin{equation}\label{eq:Afirst}
 A_4(s,w;h)=
 \sum_{\substack{F\in\AQ\text{ square-free}\\
 (F,F^\sigma)=1,\ \deg F\ge1}}
 \frac{\chi_F(h)\Lq(w,\chi_F)}{|F|_2^s}.
\end{equation}
With $u=q^{-2s}$, use $A_4(u,w;h)$ for the same function.  Coefficient
extraction gives the exact identity
\begin{equation}\label{eq:perron-first}
 \sumchi\chi(h)\Lq(1/2,\chi)
 =\frac1{2\pi i}\oint_{|u|=r}
 A_4(u,1/2;h)\frac{du}{u^{m+1}},
\end{equation}
initially for a sufficiently small circle.

For $H\in\Aq$, write $\chi^{(H)}(R)=\left(\frac{R}{H}\right)_4$ over $\F_Q[T]$.
Expanding the $L$-function and applying the two
M\"obius identities used in \eqref{eq:B}, quartic reciprocity gives
\begin{align}\label{eq:Aexpand}
 A_4(s,w;h)
 &=\sum_{f\in\Aq}\frac1{|f|^w}\Bigg\{
 \frac{L_Q(s,\chi^{(hf)})}{L_Q(2s,(\chi^{(hf)})^2)}\notag\\
 &\qquad\times\sum_{\substack{D\in\Aq\\(D,hf)=1}}
 \frac{\mu_q(D)}{|D|_2^s}
 \prod_{\pi\mid D}
 \left(1+\frac{\chi_\pi(hf)}{|\pi|_2^s}\right)^{-1}
 -1\Bigg\}.
\end{align}
The identity holds initially for $\Rea s,\Rea w>1$.
The subtraction removes $F=1$.
The diagonal condition
is
\begin{equation}\label{eq:first-diagonal}
 hf\text{ is a fourth power}\quad\Longleftrightarrow\quad
 f=h^\ast B^4\quad(B\in\Aq).
\end{equation}

\subsection{The arithmetic factor}

\begin{proposition}\label{prop:first-factor}
The contribution of the simple pole at $s=1$ from the diagonal in
\eqref{eq:Aexpand}, evaluated at $w=1/2$, is $q^{2m}\mathcal M_{4,q}(h)$,
where
\begin{equation}\label{eq:Mmain}
 \mathcal M_{4,q}(h)=\frac{c_q}{\sqrt{|h^\ast|}}
 \sum_{B\in\Aq}\frac{\rho(hB)}{|B|^2}.
\end{equation}
The series converges absolutely and has the Euler product
\begin{align}\label{eq:Mmain-product}
 \mathcal M_{4,q}(h)
 ={}&\frac{c_q\zeta_q(2)}{\sqrt{|h^\ast|}}
 \prod_P\left(1-(1-\rho_P)|P|^{-2}\right)\notag\\
 &\times\prod_{P\mid h}
 \frac{\rho_P}{1-(1-\rho_P)|P|^{-2}}.
\end{align}
\end{proposition}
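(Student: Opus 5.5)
Restrict \eqref{eq:Aexpand} to the diagonal terms \eqref{eq:first-diagonal}, $f=h^\ast B^4$, and compute their polar part at $s=1$ in the variable $u=q^{-2s}$, i.e.\ at $u=q^{-2}$. For these $f$ the symbol $\chi^{(hf)}$ is principal modulo $\rad(hB)$. The ratio $L_Q(s,\chi^{(hf)})/L_Q(2s,(\chi^{(hf)})^2)$ is then $\zeta_Q(s)/\zeta_Q(2s)$ with the Euler factors at the primes $\pi\mid hB$ over $\F_Q$ removed. In the $D$-sum every $\chi_\pi(hf)$ with $\pi\mid D$ equals one, since $(D,hf)=1$.

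The diagonal term for a given $B$ is therefore exactly the counting series \eqref{eq:B} with the extra condition $(F,hB)=1$. By the remark after Lemma~\ref{lem:count}, inserting this condition removes precisely the even-degree local factors at the primes $P\mid hB$. After multiplying by $1-q^2u$ and evaluating at $u=q^{-2}$, its residue is $c_q\rho(hB)$. The subtracted $-1$ is entire and contributes nothing to the pole. Coefficient extraction in \eqref{eq:perron-first} converts the residue at $u=q^{-2}$ into the factor $q^{2m}$.

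At $w=1/2$, the weight of the term $f=h^\ast B^4$ is $|f|^{-1/2}=|h^\ast|^{-1/2}|B|^{-2}$. Summing over $B$ gives $q^{2m}c_q|h^\ast|^{-1/2}\sum_B\rho(hB)|B|^{-2}$, which is \eqref{eq:Mmain}. To justify exchanging the $f$-sum with taking the residue, note that $0<\rho_P\le1$. Hence the $B$-series is dominated by $\zeta_q(2)$ and converges absolutely. Also, each diagonal term is $(1-q^2u)^{-1}$ times a product that converges normally and uniformly in $B$ on $|u|<q^{-1}$. This normal convergence is the main point to verify: the local factors $(1+2u^{\deg P/2})(1-u^{\deg P/2})^2$ stay bounded uniformly after deleting the factors at $P\mid hB$. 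Dominated convergence then lets us sum the residues term by term.

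For the Euler product, $\rho$ depends only on the set of primes dividing its argument, and $B\mapsto\rho(hB)$ is multiplicative in the appropriate sense. Factor $\sum_B\rho(hB)|B|^{-2}$ prime by prime.
\begin{itemize}
\item For $P\nmid h$, the local factor is $1+\rho_P\sum_{k\ge1}|P|^{-2k}=(1-|P|^{-2})^{-1}\bigl(1-(1-\rho_P)|P|^{-2}\bigr)$.
\item For $P\mid h$, every term carries $\rho_P$, so the local factor is $\rho_P(1-|P|^{-2})^{-1}$.
\end{itemize}
The product of $(1-|P|^{-2})^{-1}$ over all $P$ is $\zeta_q(2)$. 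Writing the $P\mid h$ factors relative to the generic factor $1-(1-\rho_P)|P|^{-2}$ gives \eqref{eq:Mmain-product}. Absolute convergence of this product follows from $1-\rho_P=O(|P|^{-1})$.
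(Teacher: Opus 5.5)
Your proposal is correct and follows essentially the paper's own argument. On the diagonal $f=h^\ast B^4$ the summand is the admissible counting series restricted to $(F,hB)=1$; multiplying by $1-q^2u$ and evaluating at $u=q^{-2}$ gives $c_q\rho(hB)$; and the Euler product comes from the same local $B$-sums at $P\nmid h$ and $P\mid h$.

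One small correction to your interchange justification. The extra factors $(1+2u^{\deg P/2})^{-1}$ at even-degree $P\mid hB$ are not bounded uniformly in $B$. Near $u=q^{-2}$ they are only $O_\varepsilon(|hB|^\varepsilon)$, which the weight $|B|^{-2}$ still absorbs. The paper handles this step differently: it evaluates term by term for $\Rea w>1$ and then continues to $w=1/2$ by the identity theorem.
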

\begin{proof}
If $hf$ is a fourth power, $\chi_F(hf)$ is the indicator of
$(F,hf)=1$.  By Lemma~\ref{lem:count} and \eqref{eq:rho}, its generating series, multiplied by $1-q^{2-2s}$,
has value $c_q\rho(hf)$ at $s=1$.  Substitution of
\eqref{eq:first-diagonal} gives $\rho(hf)=\rho(hB)$ and the weight
$|h^\ast|^{-1/2}|B|^{-2}$.  At $P\nmid h$, the local $B$-sum is
\[
 1+\frac{\rho_P|P|^{-2}}{1-|P|^{-2}}
 =\frac{1-(1-\rho_P)|P|^{-2}}{1-|P|^{-2}},
\]
whereas at $P\mid h$ it is $\frac{\rho_P}{1-|P|^{-2}}$.
Multiplying the local factors proves \eqref{eq:Mmain-product}.
\end{proof}

\subsection{The base-field root number}

Put
\[
 \kappa_q=(-1)^{(q+1)/4}.
\]
\begin{lemma}\label{lem:root-normalization}
For an admissible $F$ of degree $d\ge1$, with the additive characters
specified in Section~\ref{sec:prelim},
\begin{equation}\label{eq:root-normalization}
 G_q(\chi_F)=\chi_F(F^\sigma)G_Q(1,F)
 =\kappa_q^dG_Q(1,F),\qquad
 \omega(\chi_F)=q^{-d}\kappa_q^dG_Q(1,F).
\end{equation}
Here $G_q(\chi_F)$ has modulus $FF^\sigma$, whereas $G_Q(1,F)$
has modulus $F$.
\end{lemma}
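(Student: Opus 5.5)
The plan is to compute the Gauss sum of $\chi_F$ over $\F_q[T]$ by reducing the sum modulo $FF^\sigma$ to a sum over residues modulo $F$ in $\F_{q^2}[T]$. Then we identify the twisting factor by reciprocity, and finally read off the root number from the standard relation between $\omega(\chi)$ and the Gauss sum of an even primitive character.

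\textbf{Step 1 (reduction to residues mod $F$).} Since $(F,F^\sigma)=1$, the inclusion $\F_q[T]\to\F_Q[T]$ induces an isomorphism
\[
\F_q[T]/(FF^\sigma)\;\cong\;\F_Q[T]/(F).
\]
This holds because the composition is injective: a polynomial over $\F_q$ divisible by $F$ is divisible by $F^\sigma$, hence by $FF^\sigma$. Both sides also have $q^{2d}$ elements. Under this identification $\chi_F(a)=(a/F)_4$.

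The remaining task is to compare the additive characters. The base-field additive character of $a/(FF^\sigma)$ uses the coefficient of $T^{-1}$ composed with $\mathrm{tr}_{\F_q/\F_p}$. The character $e_Q(a/F)$ uses the $T^{-1}$ coefficient of $a/F$ composed with $\mathrm{tr}_{\F_Q/\F_p}=\mathrm{tr}_{\F_q/\F_p}\circ\mathrm{tr}_{\F_Q/\F_q}$.

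Write $1/(FF^\sigma)=\alpha/F+\alpha^\sigma/F^\sigma$, where $\alpha\equiv (F^\sigma)^{-1}\pmod F$; this uses the partial fraction decomposition and its $\sigma$-invariance. The residue of $a/(FF^\sigma)$ is then
\[
\mathrm{res}(a\alpha/F)+\mathrm{res}(a\alpha/F)^q=\mathrm{tr}_{\F_Q/\F_q}\,\mathrm{res}(a\alpha/F),
\]
since $a$ has coefficients in $\F_q$. Hence
\[
G_q(\chi_F)=\sum_{a\bmod F}(a/F)_4\,e_Q(a\alpha/F)=G_Q(\alpha,F)=\overline{\chi_F(\alpha)}\,G_Q(1,F)
\]
by \eqref{eq:Gshift}. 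Because $\alpha\equiv (F^\sigma)^{-1}$, this equals $\chi_F(F^\sigma)\,G_Q(1,F)$, which is the first identity.

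\textbf{Step 2 (evaluating $\chi_F(F^\sigma)$).} I expect this to be the main obstacle, since the sign bookkeeping is delicate. I would compute $(F^\sigma/F)_4$ multiplicatively over pairs of primes $\pi\mid F$, $\varpi\mid F$, using quartic reciprocity over $\F_Q$; the sign is trivial because $Q\equiv1\pmod 8$. Reciprocity gives
\[
(\pi^\sigma/\varpi)_4(\varpi^\sigma/\pi)_4=(\varpi/\pi^\sigma)_4(\varpi^\sigma/\pi)_4 .
\]
Applying $\sigma$, one has $(x^\sigma/y^\sigma)_4=(x/y)_4^q=\overline{(x/y)_4}$, since $q\equiv 3\pmod 4$. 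So the cross terms with $\pi\ne\varpi$ pair off with their conjugates and contribute $|\cdot|^2=1$.

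This leaves the diagonal factors $(\pi^\sigma/\pi)_4$. For these I would use $\pi^\sigma\equiv \pi^\sigma-\pi=$ a multiple of $(c^q-c)$ modulo $\pi$, or more cleanly evaluate $(\pi^\sigma/\pi)_4$ through the resultant $\mathrm{Res}(\pi,\pi^\sigma)$. The resultant is $\sigma$-anti-invariant up to the sign $(-1)^{\deg\pi^2}$, which places it in a specific coset of $\F_Q^\times$. The residue symbol of a constant $c$ at a prime of degree $e$ is
\[
c^{(Q^e-1)/4}=\bigl(c^{(Q-1)/4}\bigr)^{e}\cdot(\text{correction}),
\]
so $(\pi^\sigma/\pi)_4$ should come out as $\kappa_q^{\deg\pi}$, with $\kappa_q=(-1)^{(q+1)/4}$ being the quartic character of the element $\beta$ with $\beta^{q-1}=-1$. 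Multiplying over $\pi\mid F$ gives $\kappa_q^{d}$. If the resultant route is messy, I would first verify the case $\deg\pi=1$, $\pi=T-c$, where $(\pi^\sigma/\pi)_4=(c-c^q)^{(Q-1)/4}$ and $(c-c^q)^{q-1}=-1$. That identity gives $(c-c^q)^{(q^2-1)/4}=(-1)^{(q+1)/4}$ directly. The general degree then follows by passing to $\F_{Q^{e}}$ and taking norms.

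\textbf{Step 3 (the root number).} For an even primitive character of conductor degree $2d$, the functional equation \eqref{eq:FE}, with $m=d$, arises from the standard completed functional equation. In that equation $\omega(\chi)=G_q(\chi)/q^{d}$, since $|G_q(\chi)|=|FF^\sigma|^{1/2}=q^{d}$. With the normalization of the additive character fixed in Section~\ref{sec:prelim}, this is the usual formula obtained from Poisson summation, and one only needs to check that the even-character factor $(1-q^{-s})/(1-q^{s-1})$ introduces no extra sign. Combining this with Steps 1 and 2 gives $\omega(\chi_F)=q^{-d}\kappa_q^dG_Q(1,F)$.
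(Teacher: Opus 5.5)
Your proof is correct. Steps~1 and~3 coincide with the paper's argument. Identifying $\F_q[T]/(FF^\sigma)$ with $\F_Q[T]/(F)$ by inclusion and then shifting by $\alpha\equiv(F^\sigma)^{-1}\pmod F$ is the inverse of the paper's bijection $b\mapsto bF^\sigma+b^\sigma F$. Both arguments also take $\omega(\chi)=q^{-d}G_q(\chi)$ for even primitive characters as standard.

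The real difference is Step~2. You factor $F$ and cancel the off-diagonal pairs $(\pi^\sigma/\varpi)_4(\varpi^\sigma/\pi)_4$, using quartic reciprocity together with $(x^\sigma/y^\sigma)_4=\overline{(x/y)_4}$. Only then do you treat each diagonal factor $(\pi^\sigma/\pi)_4$ by a resultant argument. The paper runs the resultant argument once, for $F$ itself. The norm formula $(a/\pi)_4=\operatorname{Res}_T(\pi,a)^{(Q-1)/4}$ is multiplicative in $\pi$, so $\chi_F(F^\sigma)=R^{(Q-1)/4}$ with $R=\operatorname{Res}_T(F,F^\sigma)$. Then $R^q=\operatorname{Res}_T(F^\sigma,F)=(-1)^{d}R$ gives $R^{(Q-1)/4}=(R^{q-1})^{(q+1)/4}=\kappa_q^d$ at once. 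Your reciprocity pairing is valid but redundant, because the global resultant already absorbs the cross terms. What your version adds is only the local statement that each $\pi\mid F$ contributes $\kappa_q^{\deg\pi}$, which the same resultant computation gives prime by prime.

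When you write it up, state the norm formula explicitly, since that is the identity you actually use. Your displayed formula for a constant $c$ needs no correction factor, because $(Q^e-1)/(Q-1)\equiv e\pmod 4$. It is not the relevant statement, though, since $\pi^\sigma$ is not constant modulo $\pi$ in general. The fallback of proving the degree-one case and then taking norms is only sketched and would require redoing the pairing over $\F_{Q^e}$. The resultant route makes it unnecessary.
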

\begin{proof}
The map
\[
 b\pmod F\longmapsto bF^\sigma+b^\sigma F\pmod{FF^\sigma}
\]
is a bijection onto the base-field residue classes: it is invariant
under $\sigma$, and is invertible by the Chinese remainder theorem.
Its additive character is $e_Q(b/F)$, while its multiplicative
character is $\chi_F(b)\chi_F(F^\sigma)$. This proves the first equality.
If $R=\operatorname{Res}_T(F,F^\sigma)$, the norm definition of the
quartic symbol gives $\chi_F(F^\sigma)=R^{(Q-1)/4}$, interpreted through
our fixed identification of roots of unity. Moreover
\[
 R^q=\operatorname{Res}_T(F^\sigma,F)=(-1)^dR.
\]
Consequently $R^{(Q-1)/4}=(-1)^{d(q+1)/4}$.
Finally, the root-number formula for a primitive even character of
conductor degree $2d$ is $q^{-d}G_q(\chi_F)$.
\end{proof}

\subsection{Gauss series with the additional conductor restrictions}

For monic $r\in\AQ$ and $h\in\Aq$, define
\[
 \widetilde\Psi_Q(r,v;h)=\sum_{\substack{F\in\AQ\\(F,rh)=1}}G_Q(r,F)v^{\deg F}.
\]
If $(F,rh)=1$, then $G_Q(r,F)$ vanishes unless $F$ is square-free,
and otherwise $|G_Q(r,F)|=|F|_2^{1/2}$. Hence, for $|v|<Q^{-3/2}$,
\[
 |\widetilde\Psi_Q(r,v;h)|\le\zeta_Q(Q^{1/2}|v|).
\]
\begin{lemma}\label{lem:restricted-gauss}
Let $5/4<\alpha<3/2$ and $|v|=Q^{-\alpha}$. On closed
subintervals of this range, for every $\epsilon>0$,
\begin{equation}\label{eq:needed-gauss}
 \widetilde\Psi_Q(r,v;h)\ll_{q,\epsilon,\alpha}
 |rh|_2^\epsilon |r|_2^{3/4-\alpha/2}.
\end{equation}
The implied constant is uniform in $r,h$ and in the argument of $v$.
The series has a holomorphic continuation throughout
$|v|<Q^{-5/4}$.
\end{lemma}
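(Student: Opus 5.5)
The plan is to reduce $\widetilde\Psi_Q(r,v;h)$ to the unrestricted Gauss sum generating series of \cite[Theorem~5.3 and Corollary~5.4]{DFL25}, applied over $\F_Q$. That series is
\[
 \Psi_Q(r,v)=\sum_{F\in\AQ}G_Q(r,F)v^{\deg F}.
\]
It has meromorphic continuation to $|v|<Q^{-5/4}$, with possible poles only on $|v|=Q^{-3/2}$ (at $v^4=Q^{-6}$, i.e.\ coming from $\zeta_Q(Q^2v^4)$-type factors). On circles $|v|=Q^{-\alpha}$ with $5/4<\alpha<3/2$ it satisfies the convexity-type bound $\ll|r|_2^{\epsilon}|r|_2^{3/4-\alpha/2}$. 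If the cited corollary is stated with $r$-dependence $|r|^{(\ldots)}$ in the $\F_Q$ norm, I will first check that it matches $|r|_2^{3/4-\alpha/2}$. That exponent is what Phragmén--Lindelöf interpolation gives between $\alpha=3/2$ (exponent $0$) and $\alpha=5/4$ (exponent $1/8$).

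\emph{Step 1: removing the coprimality condition.} Write $(F,rh)=1$ via Möbius over $\F_Q$: $\sum_{D\mid (F,\rad(rh))}\mu_Q(D)$. For $F=DF'$ with $D\mid\rad(rh)$, the multiplicativity \eqref{eq:Gprod} gives $G_Q(r,DF')=\chi_D(F')^2G_Q(r,D)G_Q(r,F')$ when $(D,F')=1$. The non-coprime cases have $G_Q$ vanishing or are handled by further splitting $F'$ into its part supported on primes of $D$. The cleaner route, as in \cite{DFL25}, is to work with the twisted series
\[
 \sum_{(F,D)=1}\chi_D(F)^2G_Q(r,F)v^{\deg F},
\]
which is again a Gauss series of the same type, with the quadratic character $\chi_D^2$ absorbed by changing $r$ via \eqref{eq:Gshift} or by the ``twisted'' version of the cited theorem. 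One obtains
\[
 \widetilde\Psi_Q(r,v;h)=\sum_{D\mid\rad(rh)}\mu_Q(D)\,G_Q(r,D)\,v^{\deg D}\,\Psi_Q^{(D)}(r,v),
\]
where $\Psi_Q^{(D)}$ denotes such twisted, $D$-coprime series.

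\emph{Step 2: bounding each term.} Each $\Psi^{(D)}_Q$ continues to $|v|<Q^{-5/4}$ with the same bound as above, uniform in $D$ up to $|D|_2^\epsilon$, by the cited results. The prefactor satisfies
\[
 |G_Q(r,D)v^{\deg D}|\le |D|_2^{1/2}|D|_2^{-\alpha}\le1,
\]
since $\alpha>1/2$; this holds because $G_Q(r,D)$ has modulus at most $|D|_2^{1/2}$ times a divisor-bounded factor for non-coprime arguments. The number of divisors $D$ is $\ll|rh|_2^\epsilon$. Summing gives \eqref{eq:needed-gauss}. Holomorphy throughout $|v|<Q^{-5/4}$ follows once the possible poles at $|v|=Q^{-3/2}$ are shown to cancel. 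They do, because the residues involve Gauss sums $G_Q(r,\cdot)$ at fourth-power-type moduli that vanish for a quartic (not quadratic) symbol: the polar part in \cite{DFL25} is present only in the cubic/$\ell$ with $r$ an appropriate power. If the polar term does not vanish identically, I would instead restrict the stated range to avoid $\alpha=3/2$, which the lemma already does (open interval), and note that holomorphy in the disc then requires the residue computation from \cite[Corollary~5.4]{DFL25}.

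\emph{Main obstacle.} The main obstacle is Step 1: making the coprimality with $h$ (a base-field polynomial, possibly splitting as $\pi\pi^\sigma$) and with $r$ compatible with the precise hypotheses of \cite{DFL25}, which are typically stated for $F$ coprime to $r$ only. I would first try to obtain uniformity in the argument of $v$ and in $D$ directly from their bound, taking the maximum-modulus principle on closed subannuli.
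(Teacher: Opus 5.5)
There are two genuine gaps, one in each half of the lemma.

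\emph{Holomorphy.} You place the possible poles on $|v|=Q^{-3/2}$ and then look for a residue cancellation. But $|v|=Q^{-3/2}$ is only the boundary of absolute convergence, and there are no poles there to cancel. The quartic metaplectic series over $\F_Q$ has the form $\psi^{(i)}(r,v)=v^iP_{r,i}(v^4)/(1-Q^5v^4)$ with $P_{r,i}$ a polynomial \cite[(4.2)]{DFL25}, so its poles lie on $|v|=Q^{-5/4}$. The finite decomposition of the restricted series in \cite[Theorem~5.3]{DFL25} adds only local denominators $1-Q^{3\deg\pi}v^{4\deg\pi}$, which vanish on $|v|=Q^{-3/4}$. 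Holomorphy in $|v|<Q^{-5/4}$ is read off from this rational structure, and no residue computation is involved. Your proposed mechanism (vanishing residues for a quartic symbol) has no basis. Your fallback of avoiding $\alpha=3/2$ would not give holomorphy across $|v|=Q^{-3/2}$, which is exactly what the later maximum-modulus step near the joining line $\sigma_0+\tau=3/2$ needs.

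\emph{The sieve in Step 1 is wrong as written.} Sieving over all $D\mid\rad(rh)$ and factoring $G_Q(r,DF')=\chi_D(F')^2G_Q(r,D)G_Q(r,F')$ requires $(D,F')=1$. This fails at primes $\pi\mid r$: there $G_Q(r,\pi)=0$, but $G_Q(r,\pi^k)$ with $k\ge2$ need not vanish (for $\pi\Vert r$ one finds $|G_Q(r,\pi^2)|=|\pi|_2^{3/2}$). If you carry out your identity, every $D$ containing a prime of $r$ drops out. The right-hand side then becomes the series over $F$ coprime only to the primes of $h$ not dividing $r$, so the condition $(F,r)=1$ is lost.

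The repair, which is the paper's argument, is not to sieve the primes of $r$ at all. The restricted series in \cite{DFL25} already carry $(F,r)=1$, so the hypothesis you flag as the main obstacle is what makes the argument work. Let $a$ be the product of the $\F_Q$-primes dividing $h$ but not $r$. Then
$\widetilde\Psi_Q(r,v;h)=(1-Q^4v^4)\sum_{E\mid a}\mu_Q(E)\sum_{i=0}^3\psi_r^{(i)}(r,E,v)$,
where $\psi_r^{(i)}$ imposes $(F,r)=1$, $E\mid F$ and $\deg F\equiv i\pmod 4$. Each $E$ is square-free and coprime to $r$. So the second case of \cite[Theorem~5.3 and Corollary~5.4]{DFL25}, valid since $4\le6/(3-2\alpha)$, gives
$\psi_r^{(i)}(r,E,v)\ll|Er|_2^\epsilon|E|_2^{2-2\alpha}|r|_2^{3/4-\alpha/2}$.
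The $E$-sum then costs $\prod_{\pi\mid a}(1+|\pi|_2^{2-2\alpha+\epsilon})\ll|a|_2^\epsilon$.

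Relatedly, your claim that the $D$-twisted series keeps the bound up to $|D|_2^\epsilon$ is not what the quoted bound gives. With $rD^2$ in place of $r$ it produces an extra factor $|D|_2^{3/2-\alpha}$. That factor is harmless only because $|G_Q(r,D)v^{\deg D}|=|D|_2^{1/2-\alpha}$ for $(D,r)=1$, leaving $|D|_2^{2-2\alpha}\le1$.
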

\begin{proof}
Let $a$ be the product of the extension-field primes dividing $h$
but not $r$. In the notation of \cite[Section~5]{DFL25}, set
\[
 \psi_r^{(i)}(r,E,v)=\frac1{1-Q^4v^4}
 \sum_{\substack{F\in\AQ\\(F,r)=1,\ E\mid F\\
                    \deg F\equiv i\ (4)}}G_Q(r,F)v^{\deg F}.
\]
Finite inclusion--exclusion gives the exact identity
\begin{equation}\label{eq:restricted-inclusion}
 \widetilde\Psi_Q(r,v;h)=(1-Q^4v^4)
 \sum_{E\mid a}\mu_Q(E)\sum_{i=0}^3\psi_r^{(i)}(r,E,v).
\end{equation}
Theorem~5.3 and Corollary~5.4 of \cite{DFL25}, with base field $Q$
and order four, apply since $E$ is square-free and $(E,r)=1$.
More explicitly, write
$r=r_1r_2^2r_3^3r_4^4$, with $r_1,r_2,r_3$ square-free and pairwise
coprime, as required by the cited theorem. Its second case applies
for order $\ell=4$: throughout $1<\alpha<3/2$ one has
$4\le6/(3-2\alpha)$. The exponent of $|E|_2$ becomes
$\ell(3/4-\alpha/2)-1=2-2\alpha$, while the exponent of $|r|_2$
is $3/4-\alpha/2$. Consequently the quoted bound is
\[
 \psi_r^{(i)}(r,E,v)
 \ll |Er|_2^\epsilon |E|_2^{2-2\alpha}
                   |r|_2^{3/4-\alpha/2}.
\]
The divisor sum is bounded by
\[
 |r|_2^\epsilon\sum_{E\mid a}|E|_2^{2-2\alpha+\epsilon}
 =|r|_2^\epsilon\prod_{\pi\mid a}
                (1+|\pi|_2^{2-2\alpha+\epsilon}).
\]
Choose the preliminary $\epsilon$ smaller than $2\alpha-2$ on
the chosen closed interval. Each factor is then at most two, and
$2^{\omega(a)}\ll_{Q,\epsilon'}|a|_2^{\epsilon'}$ for every
$\epsilon'>0$: separate the finitely many primes of bounded degree
and use $2\le|\pi|_2^{\epsilon'}$ for the others.
Since $a\mid\operatorname{rad}(h)$, this proves the asserted bound
after replacing the small exponents by $\epsilon$.
The unrestricted metaplectic series satisfies
\[
 \psi^{(i)}(r,v)=\frac{v^iP_{r,i}(v^4)}{1-Q^5v^4}
\]
with $P_{r,i}$ a polynomial; see \cite[equation~(4.2)]{DFL25}.
The finite decomposition in Theorem~5.3 expresses the restricted
series through such functions and finite local factors. Its possible
poles are zeros of $1-Q^5v^4$ or of the finite denominator factors
in \cite[Theorem~5.3]{DFL25}. In the latter the quartic reciprocity
sign is one because
$Q\equiv1\pmod8$; their form is $1-Q^{3\deg\pi}v^{4\deg\pi}$.
The global factor has zeros at $|v|=Q^{-5/4}$ and the local factors
at $|v|=Q^{-3/4}$. The decomposition and \eqref{eq:restricted-inclusion} therefore give
holomorphy for $|v|<Q^{-5/4}$, with uniform bounds on compact
subannuli.
\end{proof}

\subsection{The first convergence region}

Write $\sigma_0=\Rea s$, $\tau=\Rea w$, $u=q^{-2s}$ and
$v=q^{-w}$; for $S\subset\mathbb R^2$, put $T(S)=S+i\mathbb R^2$.
We bound the factors in \eqref{eq:Aexpand} separately.
For $\sigma_0\ge1/2+\delta$ and any $\eta>0$, Lindel\"of and
absolute convergence of the reciprocal Euler product give
\begin{equation}\label{eq:Sone-bound}
 (1-Q^{1-s})
 \frac{L_Q(s,\chi^{(hf)})}{L_Q(2s,(\chi^{(hf)})^2)}
 \ll_{q,\delta,\eta}|hf|^\eta.
\end{equation}
The character $\chi^{(hf)}$ is principal precisely when $hf$ is a
fourth power. In that case its numerator is
\[
 L_Q(s,\chi^{(hf)})=\zeta_Q(s)
       \prod_{\pi\mid hf}(1-|\pi|_2^{-s}),
\]
and the factor $1-Q^{1-s}$ removes its pole. The finite Euler
factors of imprimitive characters are $O_{q,\delta,\eta}(|hf|^\eta)$.
Moreover,
\[
 \prod_{\pi\mid D}
 \left|1+\chi_\pi(hf)|\pi|_2^{-s}\right|^{-1}
 \le\prod_{\pi\mid D}(1-|\pi|_2^{-\sigma_0})^{-1}
 \ll_{q,\delta,\eta}|D|^\eta.
\]
Consequently the $D$-sum is bounded by
$O_{q,\delta,\eta}(\zeta_q(2\sigma_0-\eta))$. Inserting these
estimates into \eqref{eq:Aexpand} yields
\[
 (1-Q^{1-s})A_4(s,w;h)
 \ll_{q,\delta,\eta}|h|^\eta
       \zeta_q(\tau-\eta)\zeta_q(2\sigma_0-\eta).
\]
On every compact subregion with $\sigma_0>1/2$ and $\tau>1$,
choose $\eta<\min(2\sigma_0-1,\tau-1)$ uniformly. The expanded
series then converges normally. Thus $(1-Q^{1-s})A_4(s,w;h)$ is
holomorphic on $T(S_1)$, where
\begin{equation}\label{eq:Sone}
 S_1=\{(\sigma_0,\tau):\sigma_0>1/2,\ \tau>1\},
\end{equation}
and is $O_{q,\eta}(|h|^\eta)$ on compact real subregions, uniformly
in the imaginary parts.

\subsection{The residue at \texorpdfstring{$s=1$}{s=1}}

For $\tau>1$, the normal convergence just proved allows evaluation
of $\left[(1-Q^{1-s})A_4(s,w;h)\right]_{s=1}$ term by term. The nonprincipal terms vanish
there after multiplication by $1-q^{2-2s}$. If $hf$ is a fourth
power, the local counting calculation in
Proposition~\ref{prop:first-factor} instead gives $c_q\rho(hf)$.
Using $f=h^\ast B^4$ we obtain
\begin{equation}\label{eq:residue-function}
 \left[(1-Q^{1-s})A_4(s,w;h)\right]_{s=1}
 =c_q|h^\ast|^{-w}\sum_{B\in\Aq}\rho(hB)|B|^{-4w}.
\end{equation}
The expression on the right converges normally for $\tau>1/4$,
because $0<\rho\le1$ and
$\sum_{\deg B=n}|B|^{-4\tau}=q^{(1-4\tau)n}$.
At $w=1/2$, the value is $\mathcal M_{4,q}(h)$.

\subsection{The second convergence region}

An admissible conductor parameter of degree $d$ has conductor degree
$2d$, and there are at most $Q^d=q^{2d}$ such parameters. The
Lindel\"of estimate and the functional equation yield, on fixed
bounded real $w$-intervals,
\begin{equation}\label{eq:family-L-bound}
 |L_q(w,\chi_F)|\ll_{q,\eta}
 q^{2d\max(0,1/2-\tau)+\eta d}.
\end{equation}
For $\tau\ge1/2$, this follows from \eqref{eq:lindelof}.
For $\tau<1/2$, apply
\eqref{eq:FE} and Lindel\"of at $1-w$; its power of $q$ is
$q^{(d-1)(1-2\tau)}$. The remaining quotient is bounded on closed
subregions of $\tau<1/2$. All bounds are uniform in $\Im w$.

Since $|\chi_F(h)|\le1$, summing \eqref{eq:Afirst} by conductor
degree gives
\[
 |A_4(s,w;h)|\ll_{q,\eta}
 \sum_{d\ge1}q^{[2-2\sigma_0+2\max(0,1/2-\tau)+\eta]d}
 \le\zeta_Q\bigl(\sigma_0-\max(0,1/2-\tau)-\eta/2\bigr).
\]
For sufficiently small $\eta$, this gives normal convergence in
\begin{align}\label{eq:Stwo}
 S_2
 &=\{\sigma_0>1+\max(0,1/2-\tau)\}\notag\\
 &=\{\sigma_0>1,\ \sigma_0+\tau>3/2\}.
\end{align}
Hence $A_4$ and $(1-Q^{1-s})A_4(s,w;h)$ are holomorphic on $T(S_2)$, with
bounds independent of $h$.

\subsection{The third representation}

In this calculation write $\alpha=\sigma_0+\tau$ and $r=fh^\ast$.
For $\tau<0$ and $\alpha>3/2$ all expansions in the following
calculation converge absolutely. Combining the degree-$d$
functional equation with Lemma~\ref{lem:root-normalization} gives
\[
 |F|_2^{-s}q^{(d-1)(1-2w)}q^{-d}
 =q^{2w-1}|F|_2^{-(s+w)}.
\]
Also, including vanishing at noncoprime arguments,
\[
 \chi_F(h)\bar\chi_F(f)
 =1_{(F,h)=1}\bar\chi_F(fh^\ast).
\]
It follows that
\begin{equation}\label{eq:dual-before-mobius}
 A_4(s,w;h)=q^{2w-1}\frac{1-q^{-w}}{1-q^{w-1}}\sum_{f\in\Aq}|f|^{w-1}
 \sum_{\substack{F\text{ admissible},\ \deg F\ge1\\(F,fh)=1}}
 \kappa_q^{\deg F}G_Q(r,F)|F|_2^{-s-w}.
\end{equation}
For a square-free extension-field polynomial, admissibility is
expressed by the exact base-field M\"obius identity
\[
 1_{(F,F^\sigma)=1}
 =\sum_{\substack{D\in\Aq\\D\mid F}}\mu_q(D).
\]
The divisor sum vanishes whenever a nonconstant base-field polynomial
divides $F$, and equals one otherwise.

Insert this identity and write $F=DE$. The square-free condition
implies $(D,E)=1$, and the twist additionally requires $(D,fh)=1$
and $(E,fh)=1$. Quartic reciprocity and Gauss multiplicativity give
\begin{align*}
 \bar\chi_{DE}(r)G_Q(1,DE)
 &=\bar\chi_D(r)\bar\chi_E(r)\chi_D(E)^2
       G_Q(1,D)G_Q(1,E)\\
 &=G_Q(r,D)G_Q(rD^2,E).
\end{align*}
In the last step $\bar\chi_E(D^2)=\chi_E(D)^2$ and
$\chi_D(E)=\chi_E(D)$. Since $(E,rD^2h)=1$, a nonsquare-free
$E$ contributes zero; this follows from the vanishing of a local
Gauss sum with a unit argument and a prime-power modulus of exponent
at least two. We may therefore remove the square-free condition
on $E$. This proves the exact identity
\begin{align}\label{eq:dual-exact}
 A_4(s,w;h)
 ={}&q^{2w-1}\frac{1-q^{-w}}{1-q^{w-1}}\Bigg\{
 \sum_{f\in\Aq}\frac1{|f|^{1-w}}
 \sum_{\substack{D\in\Aq\\(D,fh)=1}}
 \frac{\mu_q(D)\kappa_q^{\deg D}G_Q(r,D)}{|D|_2^{s+w}}\notag\\
 &\hspace{23mm}\times
 \widetilde\Psi_Q(rD^2,\kappa_q Q^{-s-w};h)
 -\zeta_q(1-w)\Bigg\}.
\end{align}
The extra restriction from primes dividing the fourth-power part of
$h$ remains inside the Gauss series even when those primes do not divide
$h^\ast$. The term $F=1$ introduced by M\"obius inversion has
$D=E=1$, contributes one for each $f$, and hence accounts for the
subtraction $\zeta_q(1-w)$.

\subsection{The third convergence region}

First assume $5/4<\alpha<3/2$. Lemma~\ref{lem:restricted-gauss}
and $|G_Q(r,D)|\le |D|_2^{1/2}=|D|$ give
\begin{align*}
 |\widetilde\Psi_Q(rD^2,\kappa_qQ^{-s-w};h)|
 &\ll |h|^\eta|rD^2|_2^{3/4-\alpha/2+\eta/2}\\
 &\ll |h|^\eta
       |f|^{3/2-\alpha+\eta}
       |h^\ast|^{3/2-\alpha+\eta}
       |D|^{3-2\alpha+2\eta}.
\end{align*}
Here $|rD^2|_2=|f|^2|h^\ast|^2|D|^4$, and the small exponents
have been relabeled. Substitution into \eqref{eq:dual-exact} gives, on compact real
subregions with $\tau<0$,
\begin{align}\label{eq:majorant-first}
 |A_4(s,w;h)|
 &\ll |h|^\eta|h^\ast|^{3/2-\alpha+\eta}
 \sum_{f\in\Aq}|f|^{1/2-\sigma_0+\eta}
 \sum_{D\in\Aq}|D|^{4-4\alpha+2\eta}+1\notag\\
 &=|h|^\eta|h^\ast|^{3/2-\alpha+\eta}
 \zeta_q(\sigma_0-1/2-\eta)\zeta_q(4\alpha-4-2\eta)+1.
\end{align}
Thus the two sums converge normally when
$\sigma_0>3/2$ and $\alpha>5/4$, after choosing $\eta$ smaller
than the convergence margins. The prefactor and the subtracted
$\zeta_q(1-w)$ are holomorphic for $\tau<0$.

For $\alpha>3/2$ we can instead use the original Gauss series:
\[
 |\widetilde\Psi_Q(R,\kappa_qQ^{-s-w};h)|
 \le\zeta_Q(\alpha-1/2).
\]
The two remaining sums are bounded by
$\zeta_q(1-\tau)$ and $\zeta_q(2\alpha-1)$.
They converge for $\tau<0$ and $\alpha>3/2$, independently of
$r,h$. To justify a neighborhood of the joining line $\alpha=3/2$,
fix a point with $\tau<0$ and choose small $\nu>0$ with
$2\nu<-\tau$ and $4\nu<1$. By the maximum-modulus principle in
the holomorphic disk of Lemma~\ref{lem:restricted-gauss}, the bound
on $|v|=Q^{-3/2+\nu}$ also bounds smaller circles:
\[
 \widetilde\Psi_Q(R,v;h)\ll |h|^\eta|R|_2^{\nu/2+\eta}
 \quad(|v|\le Q^{-3/2+\nu}).
\]
The resulting degree exponents near the joining line are
$\tau+\nu+O(\eta)<0$ and
$2-2\alpha+2\nu+O(\eta)<0$. Both are negative on a neighborhood of the fixed point, for
sufficiently small $\eta$.

Thus $(1-Q^{1-s})A_4(s,w;h)$ is holomorphic on $T(S_3)$, where
\begin{equation}\label{eq:Sthree}
 S_3=\{\sigma_0>3/2,\ \sigma_0+\tau>5/4,\ \tau<0\}.
\end{equation}
For $5/4<\alpha<3/2$, we have
\begin{equation}\label{eq:Sthree-bound}
 (1-Q^{1-s})A_4(s,w;h)\ll
 |h|^\eta|h^\ast|^{3/2-\sigma_0-\tau+\eta},
\end{equation}
on compact real subregions, uniformly in imaginary parts. The
subtracted zeta term is bounded there and is absorbed since the
right side is at least one. 

\subsection{Analytic continuation}

The convex hull of the three regions is
\begin{equation}\label{eq:first-hull}
 \operatorname{conv}(S_1\cup S_2\cup S_3)
 =\{\sigma_0>1/2,\ \sigma_0+\tau>5/4,
                        \ 5\sigma_0+4\tau>13/2\}.
\end{equation}

The character and family expansions agree on $T(S_1\cap S_2)$
by their common initially absolutely convergent expansion. The dual
representation agrees with the family expansion on the nonempty
part $\tau<0$, $\sigma_0+\tau>3/2$ of $T(S_2\cap S_3)$.
The identity theorem extends these identities across the connected
overlaps. Hence $(1-Q^{1-s})A_4(s,w;h)$ is holomorphic on the connected
tube $T(S_1\cup S_2\cup S_3)$. Bochner's tube theorem, in the form
recalled in \cite[Section~2.3]{Hong}, continues it holomorphically
to $T(\operatorname{conv}(S_1\cup S_2\cup S_3))$.

In terms of $u=q^{-2s}$ and $v=q^{-w}$, this region is
\begin{equation}\label{eq:hull-moduli}
 |u|<q^{-1},\qquad |u|^{1/2}|v|<q^{-5/4},\qquad
 |u|^5|v|^8<q^{-13}.
\end{equation}
Its intersection with $\tau=1/2$ is
\begin{equation}\label{eq:central-slice}
 \{(\sigma_0,1/2):\sigma_0>9/10\}.
\end{equation}

The zeros of $1-Q^5(\kappa_qQ^{-s-w})^4$ satisfy
$\sigma_0+\tau=5/4$, outside
$\operatorname{conv}(S_1\cup S_2\cup S_3)$.
Thus the continued $A_4(s,w;h)$ has no poles in this region except
possibly at $s=1+\pi i k/\log q$, arising from $(1-Q^{1-s})^{-1}$.
These points correspond to $u=q^{-2}$.

\subsection{Uniformity in the twist}

The bounds on $S_1$ and $S_3$ also control the dependence on $h$
after continuation. Apply the three-lines theorem to
$(1-Q^{1-s})A_4(s,w;h)$ along complex lines joining the interiors
of these two regions. The endpoint bounds are $O(|h|^\eta)$ and
$O(|h|^\eta|h^\ast|^{1/4-2\delta+\eta})$, respectively, at real
points $(1/2+\delta,1+\delta)$ and
$(3/2+\delta,-1/4+\delta)$. Thus
\begin{equation}\label{eq:interpolation}
 (1-Q^{1-s})A_4(s,1/2;h)
 \ll_{q,\delta,\eta}|h|^\eta|h^\ast|^{1/10+\eta},
 \qquad \Rea s=\frac9{10}+\frac{9\delta}{5}.
\end{equation}
For fixed $h$, holomorphy and the periods $\pi/\log q$ in $s$ and
$2\pi/\log q$ in $w$ give the boundedness required on the
interpolation strips. Choose $\delta=5\varepsilon/18$ and then
$\eta$ sufficiently small. Since $1-Q^{1-s}$ is bounded away from
zero on the resulting contour, we obtain
\begin{equation}\label{eq:uniform-contour}
 A_4(s,1/2;h)\ll_{q,\varepsilon}
 |h^\ast|^{1/10+\varepsilon}|h|^\varepsilon,
 \qquad \Rea s=\frac9{10}+\frac\varepsilon2.
\end{equation}
The estimate is uniform in every monic $h$ and in $\Im s$.

\subsection{The twisted first moment}

\begin{theorem}[Uniform twisted first moment]\label{input:first}
For every $\varepsilon>0$, uniformly in monic $h$ and integers $m\ge1$,
\begin{align}\label{eq:first-input}
 \sumchi\chi(h)\Lq(1/2,\chi)
 ={}&q^{2m}\mathcal M_{4,q}(h)\notag\\
 &+O_{q,\varepsilon}\left(q^{(9/5+\varepsilon)m}
            |h^\ast|^{1/10+\varepsilon}|h|^\varepsilon\right),
\end{align}
where $g=3m-3$ and $\mathcal M_{4,q}(h)$ is the absolutely convergent
factor in \eqref{eq:Mmain}--\eqref{eq:Mmain-product}.
\end{theorem}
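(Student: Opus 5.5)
The plan is to start from the exact Cauchy-integral identity \eqref{eq:perron-first} on a small circle $|u|=r$ and to deform the contour outward to $|u|=q^{-2(9/10+\varepsilon/2)}$, that is, to $\Rea s=9/10+\varepsilon/2$. By the analytic continuation of the previous subsections, $(1-Q^{1-s})A_4(s,1/2;h)$ is holomorphic on the central slice \eqref{eq:central-slice}. In the $u$-variable, the only singularity of $A_4(u,1/2;h)$ in the annulus between the two circles is therefore the simple pole at $u=q^{-2}$. This pole comes from $(1-q^2u)^{-1}$; the periodic copies $s=1+\pi ik/\log q$ all collapse to this single point.

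The residue theorem then gives
\[
 \sumchi\chi(h)\Lq(1/2,\chi)
 =-\Res_{u=q^{-2}}A_4(u,1/2;h)u^{-m-1}
 +\frac1{2\pi i}\oint_{|u|=q^{-9/5-\varepsilon}}
 A_4(u,1/2;h)\frac{du}{u^{m+1}}.
\]
For the residue, write $A_4=(1-q^2u)^{-1}\,[(1-q^2u)A_4]$. The residue of $(1-q^2u)^{-1}u^{-m-1}$ at $u=q^{-2}$ is $-q^{-2}\cdot q^{2(m+1)}=-q^{2m}$. It multiplies the value of the bracket at $u=q^{-2}$, which is the same as $[(1-Q^{1-s})A_4(s,1/2;h)]_{s=1}$. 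By \eqref{eq:residue-function}, whose right side converges for $\tau>1/4$, together with the identity theorem, this value equals $\mathcal M_{4,q}(h)$ of Proposition~\ref{prop:first-factor}. The main term $q^{2m}\mathcal M_{4,q}(h)$ follows.

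One subtlety needs attention. \eqref{eq:residue-function} was derived for $\tau>1$, so I must extend it to $w=1/2$. Both sides are holomorphic in $w$ on the slice $s=1$ of the continued region: that slice contains $\tau>1/4$, since $\sigma_0+\tau>5/4$ and $5+4\tau>13/2$ both hold there. I will also check that $s=1$ lies in the hull for $\tau=1/2$, which holds because $1>9/10$. Continuation in $w$ then gives the identity at $w=1/2$.

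For the remaining integral, I bound $|A_4|$ on the circle $|u|=q^{-9/5-\varepsilon}$ by the uniform estimate \eqref{eq:uniform-contour}. It gives $O(|h^\ast|^{1/10+\varepsilon}|h|^\varepsilon)$, uniformly in $\Im s$, hence uniformly around the circle. Multiplying by the circle length and by $|u|^{-m-1}=q^{(9/5+\varepsilon)(m+1)}$ gives $O_{q,\varepsilon}(q^{(9/5+\varepsilon)m}|h^\ast|^{1/10+\varepsilon}|h|^\varepsilon)$, after absorbing $q^{9/5+\varepsilon}$ and rescaling $\varepsilon$.

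The main obstacle is justifying the contour shift itself. The original $|u|=r$ lies inside the region of absolute convergence of \eqref{eq:Afirst}, which is $S_2$ at $\tau=1/2$. The point to verify is that the continued function agrees with this series there. It does, since the annulus $q^{-9/5-\varepsilon}<|u|<r$ lies inside \eqref{eq:hull-moduli} at $|v|=q^{-1/2}$, and the continuation restricts to $A_4$ on $T(S_2)$. A second check is that $1-Q^{1-s}$ is bounded away from zero on the final contour, which was already noted before \eqref{eq:uniform-contour}. Uniformity in $h$ and $m$ is then automatic, because all constants come from \eqref{eq:uniform-contour}.
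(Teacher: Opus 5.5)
Your proposal is correct and follows the paper's own proof. Both apply the residue theorem to \eqref{eq:perron-first}, pass through the single counting pole at $u=q^{-2}$, identify its residue by continuing \eqref{eq:residue-function} in $w$ to $w=1/2$ with the identity theorem, and bound the integral over the circle $|u|=q^{-9/5-\varepsilon}$ by \eqref{eq:uniform-contour}.

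Two small slips are harmless but should be fixed. First, the $s=1$ slice of the hull is $\tau>3/8$, not $\tau>1/4$, because $5+4\tau>13/2$ forces $\tau>3/8$; this still contains $\tau=1/2$. Second, the annulus should read $r<|u|<q^{-9/5-\varepsilon}$, with $\varepsilon$ small (e.g.\ $\varepsilon<1/10$ as in the paper) so that $q^{-2}$ lies inside it.
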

\begin{proof}
At $s=1$, the continuation region contains $\tau>3/8$.
Thus $\left[(1-Q^{1-s})A_4(s,w;h)\right]_{s=1}$ is holomorphic on that half-plane and
agrees, for $\tau>1$, with \eqref{eq:residue-function}.
The latter expression is holomorphic for $\tau>1/4$; the one-variable
identity theorem shows that they agree throughout $\tau>3/8$.
In particular, $[(1-Q^{1-s})A_4(s,1/2;h)]_{s=1}=\mathcal M_{4,q}(h)$.

By periodicity in $s$ and \eqref{eq:central-slice},
$A_4(u,1/2;h)$ extends meromorphically to $|u|<q^{-9/5}$.
Since $\left[(1-Q^{1-s})A_4(s,1/2;h)\right]_{s=1}=\mathcal M_{4,q}(h)>0$, its unique pole
in this disk is simple and lies at $u=q^{-2}$. Hence
\[
 \Res_{u=q^{-2}}\frac{A_4(u,1/2;h)}{u^{m+1}}
 =-q^{2m}\mathcal M_{4,q}(h).
\]
Fix $0<\varepsilon<1/10$ and set $R=q^{-9/5-\varepsilon}$.
Since $q^{-2}<R<q^{-9/5}$, the residue theorem applied to
\eqref{eq:perron-first} gives
\[
 \sumchi\chi(h)\Lq(1/2,\chi)
 =q^{2m}\mathcal M_{4,q}(h)+
   \frac1{2\pi i}\oint_{|u|=R}
                  A_4(u,1/2;h)\frac{du}{u^{m+1}}.
\]
By \eqref{eq:uniform-contour}, the absolute value of the last integral
is at most
\[
 R^{-m}\sup_{|u|=R}|A_4(u,1/2;h)|
 \ll_{q,\varepsilon}q^{(9/5+\varepsilon)m}
                    |h^\ast|^{1/10+\varepsilon}|h|^\varepsilon.
\]
This proves \eqref{eq:first-input}.
\end{proof}

\begin{remark}\label{rem:first-status}
If $|b(h)|\le1$ and $\deg h\le L$, then
$|h^\ast|\le|h|^3$ and summing the error against
$b(h)|h|^{-1/2}$ gives, after renaming the small exponents,
\[
 q^{(9/5+\varepsilon)m}
 \sum_{n=0}^L q^n q^{(-1/2+3/10+\varepsilon)n}
 \ll_{q,\varepsilon}
 q^{(9/5+\varepsilon)m+(4/5+\varepsilon)L}.
\]
Hence $L<(1/4-\delta)m$, for a fixed $\delta>0$ and sufficiently
small $\varepsilon$, gives a power saving compared with $q^{2m}$.
\end{remark}

\section{Mollifier}\label{sec:second}\label{sec:GZ}

We construct a mollifier from squared truncated exponentials and
decompose the family into good and exceptional sets, following the
framework of Curran~\cite{Curran24} and its Dirichlet analogue in
Gao and Zhao~\cite[Sections~2.7, 3 and~5.9]{GZ}.
The twisted first moment is supplied by Theorem~\ref{input:first},
and character averages are evaluated by Lemma~\ref{lem:orthog}.

\subsection{Parameters, prime squares, and the mollifier}

All logarithms without a subscript are natural. The following parameters
are used for the quartic, sextic, and cubic families throughout the paper.
For sufficiently large $m$, set
\begin{align}\label{eq:GZparameters}
 \theta_j&=e^j(\log m)^{-2},&
 J&=\max\{j\ge0:\theta_j\le e^{-51}\},\notag\\
 N_j&=\lfloor2m\theta_j\rfloor,&
 K_j&=\theta_j^{-3/4},&\ell_j&=2\lceil50K_j\rceil.
\end{align}
Thus $e^{-52}<\theta_J\le e^{-51}$ and $J=O(\log\log m)$.
Put $B=\lceil4\log_q(2m)\rceil$; eventually $2B<N_0$. Write
$I_0=(0,N_0]$, $I_j=(N_{j-1},N_j]$ for $j\ge1$, and
\[
 a_N(P)=e^{-\deg P/N}(1-\deg P/N),\qquad
 Q_B(\chi)=\frac12\sum_{\deg P\le B}\frac{\chi(P)^2}{|P|}.
\]
For $0\le r\le u\le J$ define
\begin{equation}\label{eq:GZblocks}
 \mathcal P_{r,N_u}(\chi)=
 \sum_{\deg P\in I_r}\frac{a_{N_u}(P)\chi(P)}{\sqrt{|P|}}
 +1_{\{r=0\}}Q_B(\chi).
\end{equation}
The square term is the same at every scale. Define
\begin{equation}\label{eq:GZmollifier}
 E_\ell(z)=\sum_{v=0}^\ell\frac{z^v}{v!},\qquad
 M_G(\chi)=\prod_{r=0}^J
 E_{\ell_r}\!\left(-\frac{\mathcal P_{r,N_J}(\chi)}2\right)^2.
\end{equation}
Then
$|M_G|=\prod_r|E_{\ell_r}(-\mathcal P_{r,N_J}/2)|^2$ is a
nonnegative finite polynomial in characters and their conjugates.

\begin{lemma}\label{lem:GZlength}
Writing $M_G(\chi)=\sum_{\deg h\le L}b_m(h)\chi(h)/\sqrt{|h|}$,
one has $|b_m(h)|\le1$ and may take
\begin{equation}\label{eq:GZlength}
 L=2\sum_{r=0}^J\ell_rN_r,
 \qquad \frac Lm\le
 \frac{400\theta_J^{1/4}}{1-e^{-1/4}}+
 \frac{8\theta_J}{1-e^{-1}}<0.005248338.
\end{equation}
\end{lemma}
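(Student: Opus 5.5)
The plan is to treat the two assertions separately: first the support and length of $M_G$, then the coefficient bound, which I would get from a formal majorant argument.

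\emph{Support and length.} In the block $r$, the sum $\mathcal P_{r,N_J}(\chi)$ is a linear combination of $\chi(P)$ with $\deg P\in I_r$. For $r=0$ it also contains $\chi(P)^2=\chi(P^2)$ with $\deg P\le B$, and these satisfy $\deg P^2\le2B<N_0$. So every monomial in $E_{\ell_r}(-\mathcal P_{r,N_J}/2)$ has the form $\chi(h_r)$ with $\deg h_r\le\ell_rN_r$. After squaring, $\deg h_r\le 2\ell_rN_r$. Multiplying over $r$ gives $\deg h\le L=2\sum_r\ell_rN_r$. Writing each $\chi(P)/\sqrt{|P|}$ and $\chi(P^2)/\sqrt{|P^2|}$ as $\chi(h)/\sqrt{|h|}$ puts $M_G$ in the stated form.

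\emph{Coefficient bound.} I would work formally in independent variables $x_P$ standing for $\chi(P)/\sqrt{|P|}$, so that $\chi(P)^2/|P|=x_P^2$. Put $y_P=a_{N_J}(P)x_P+\tfrac12 1_{\deg P\le B}x_P^2$, so that $\mathcal P_{r,N_J}=\sum_{\deg P\in I_r}y_P$. Since $|a_N(P)|\le1$ for $\deg P\le N$ (and $N_r\le N_J$), the coefficients of $E_{\ell_r}(-\mathcal P_r/2)^2$ are bounded in absolute value by those of $\exp(\sum|y_P|)^2$ written with majorant coefficients. Here $\exp(\sum |y_P|/2)^{2}=\exp(\sum|y_P|)$, and $|y_P|\preceq x_P+x_P^2/2$. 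This works because truncating the exponential series only deletes nonnegative majorant terms, and squaring the truncation is dominated by squaring the full series.

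Because the blocks use disjoint sets of primes, the product over $r$ is dominated by $\prod_P\exp(x_P+x_P^2/2)$. The coefficient of $x^k$ in $\exp(x+x^2/2)$ is $t_k/k!$, where $t_k\le k!$ counts involutions. Hence every coefficient of the majorant is at most $1$, and so $|b_m(h)|\le1$. One point needs care: distinct $h$ must correspond to distinct monomials $\prod x_P^{\alpha_P}$. This holds by unique factorisation, so no coefficients merge.

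\emph{Numerics.} From $N_r\le2m\theta_r$ and $\ell_r\le100\theta_r^{-3/4}+2$ we get $L/m\le\sum_{r\le J}(400\theta_r^{1/4}+8\theta_r)$. Since $\theta_r=\theta_Je^{r-J}$, the two geometric series are bounded by $\theta_J^{1/4}/(1-e^{-1/4})$ and $\theta_J/(1-e^{-1})$, which gives the middle expression. Inserting $\theta_J\le e^{-51}$, the first term is $400e^{-12.75}/(1-e^{-1/4})\approx0.0052483$, and the second term is negligible. The only step needing genuine checking is this final decimal bound, which has to be verified to enough digits.
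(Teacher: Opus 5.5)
Your proposal is correct and is essentially the paper's proof. It uses the same degree count via $2B<N_0$, the same majorant $\prod_P e^{x_P+x_P^2/2}$ obtained by dropping signs and Taylor cutoffs, and the same geometric-series numerics; the only variation is that you bound the coefficients of $e^{x+x^2/2}$ by counting involutions ($t_k\le k!$), whereas the paper uses the recurrence $v\alpha_v=\alpha_{v-1}+\alpha_{v-2}$.

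Two small points. First, the majorant should be written $\exp(\sum_P|y_P|/2)^2=\exp(\sum_P|y_P|)$, not $\exp(\sum_P|y_P|)^2$; the latter would give coefficient $2$ at $x_P$. Second, the final decimal check is genuinely tight: $400e^{-51/4}/(1-e^{-1/4})=0.00524833758\ldots$ lies only about $4\times10^{-10}$ below the stated constant.
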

\begin{proof}
Each term in the first block has degree at most $N_0$, including
$P^2$ since $2B<N_0$. Expansion of the squares proves the degree bound.
For absolute coefficients, replace all signs by plus signs and remove
the Taylor cutoffs. At each prime this is coefficientwise bounded by
$e^{x+x^2/2}=\sum_{v\ge0}\alpha_vx^v$. Here
$\alpha_0=\alpha_1=\alpha_2=1$ and
$v\alpha_v=\alpha_{v-1}+\alpha_{v-2}$, so $0\le\alpha_v\le1$.
Consequently $|b_m(h)|\le\prod_{P^v\Vert h}\alpha_v\le1$.
Finally $\ell_r\le100\theta_r^{-3/4}+2$ and summing the two
geometric series gives \eqref{eq:GZlength}.
\end{proof}

\begin{lemma}[Short logarithmic upper bound]\label{lem:GZshort}
Uniformly in $0\le j\le J$, for sufficiently large $m$,
\begin{equation}\label{eq:GZshort}
 \log|\Lq(1/2,\chi)|\le
 \Rea\sum_{r=0}^j\mathcal P_{r,N_j}(\chi)+\theta_j^{-1}+2.
\end{equation}
At a zero, the left side is interpreted as $-\infty$.
\end{lemma}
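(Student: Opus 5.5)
The plan is to prove the function-field analogue of Soundararajan's upper bound for $\log|L(1/2,\chi)|$ (Chandee--Soundararajan type), using the explicit formula for the completed $L$-polynomial, and then match the resulting prime sum to $\mathcal P_{r,N_j}$.

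\textbf{Step 1: the completed $L$-polynomial.} Since $\chi$ is even, write $\Lpol(v,\chi)=(1-v)\prod_{i=1}^{2m-2}(1-\sqrt q\,e^{i\gamma_i}v)$. At $v=q^{-1/2}$ the trivial factor is $1-q^{-1/2}$, which is harmless: $\log|1-q^{-1/2}|<0$.

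\textbf{Step 2: explicit formula.} Taking logarithmic derivatives gives, for $n\ge1$,
\[
 \sum_{\deg f=n}\Lambda(f)\chi(f)=-q^{n/2}\sum_i e^{in\gamma_i}-1 .
\]
Consider $\log|\Lpol(q^{-1/2},\chi)|=\sum_i\log|1-e^{i\gamma_i}|$ plus the trivial term. The function $\log|1-e^{i\gamma}|$ is bounded above by the Fourier truncation built from the Fejér-type weight $a_N(P)$: the weight $e^{-n/N}(1-n/N)$ on $n\le N$ has nonnegative-kernel properties. Concretely, use the Carneiro--Chandee--Milinovich type inequality on the circle,
\[
 \log|1-e^{i\gamma}|\le -\Rea\sum_{n\le N}\frac{\hat w(n)}{n}e^{in\gamma}+\frac{C}{N}.
\]
An alternative route is the mean-value comparison used in Bui--Florea. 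Summing over the $2m-2$ zeros and inserting the explicit formula yields
\[
 \log|\Lq(1/2,\chi)|\le\Rea\sum_{\deg f\le N}\frac{\Lambda(f)\chi(f)a_N(f)}{|f|^{1/2}\deg f}+\frac{2m}{N}+O(1).
\]
With $N=N_j=\lfloor2m\theta_j\rfloor$ the main error is $\theta_j^{-1}$, plus a bounded term.

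\textbf{Step 3: prime powers.} Split $f=P^k$. The terms with $k=1$ give exactly $\sum_{r\le j}\sum_{\deg P\in I_r}a_{N_j}(P)\chi(P)|P|^{-1/2}$. The terms with $k\ge3$ are $O(1)$ absolutely, by the prime polynomial theorem. The terms with $k=2$ give $\tfrac12\sum a_{N_j}(P^2)\chi(P)^2/|P|$. Split these at $\deg P\le B$ and $\deg P>B$:
\begin{itemize}
 \item For $\deg P\le B$, replace $a_{N_j}(P^2)$ by $1$ at cost $\sum_{\deg P\le B}2\deg P/(N_j|P|)\ll B^2/N_0=o(1)$, producing $Q_B$.
 \item For $\deg P>B$, bound the range $B<\deg P\le N_j/2$ using the explicit formula for $\chi^2$. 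Since $\chi^2$ is a nonprincipal character of conductor degree at most $2m$, the Weil bound gives $\sum_{\deg P=n}\chi^2(P)\ll q^{n/2}m/n$. Because $q^{B/2}\ge(2m)^2$, the tail is $\ll\sum_{n>B}m/(nq^{n/2})=o(1)$.
\end{itemize}
In the sextic case $\chi^2$ is cubic, and it is still nonprincipal.

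\textbf{Step 4: assembly.} Collect the $O(1)$ terms. The main obstacle is keeping the constant explicit, so that the total is at most $2$ for large $m$ uniformly in $j$, rather than an unspecified $O(1)$. I will make the following constants explicit:
\begin{itemize}
 \item the Fourier-kernel constant $C$, which should give exactly $2m/N_j\le\theta_j^{-1}(1+o(1))$ plus $O(1/N_j)$;
 \item the $k\ge3$ tail $\sum_P\sum_{k\ge3}|P|^{-k/2}/k$, computed numerically for the worst case $q=3$;
 \item the trivial factor.
\end{itemize}
The rounding in $N_j$ gives $2m/N_j\le\theta_j^{-1}+O(1/(m\theta_j^2))$, which is negligible because $\theta_j\ge(\log m)^{-2}$.
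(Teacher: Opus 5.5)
Your Steps 3 and 4 are the paper's own argument. The cubes and higher prime powers contribute less than one. The squares are shortened to $Q_B(\chi)$ using the Weil bound for the nonprincipal character $\chi^2$ above $B$ and weight replacement below $B$. And $2m/N_j=\theta_j^{-1}+o(1)$.

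The gap is Step 2, which is the only substantive input. You appeal to a Carneiro--Chandee--Milinovich (Beurling--Selberg) majorant with an unspecified kernel $\hat w$ and constant $C$. You then take $\hat w(n)=e^{-n/N}(1-n/N)$ and conclude $2m/N+O(1)$ without justification. This fails on two counts:
\begin{itemize}
\item An extremal trigonometric majorant of $\log|1-e^{i\gamma}|$ has its own Fourier coefficients. That route therefore does not produce the sums $\mathcal P_{r,N_j}$ with the weights $a_{N_j}(P)$ that the statement requires.
\item More seriously, the constant must be exact. Since $\theta_j^{-1}\ge e^{51}$, a per-zero error $C/N$ with a fixed $C>1$ gives $C\theta_j^{-1}$, which cannot be absorbed into the $+2$. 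Your extra $O(1)$ must also be bounded explicitly, because it shares the budget of $2$ with the higher prime powers.
\end{itemize}
Your remark that the kernel constant ``should give exactly $2m/N_j$'' is precisely what has to be proved. The Bui--Florea route you mention only in passing is the right one.

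The needed input, with no further constant, is
\[
 \log|\Lq(1/2,\chi)|\le\Rea\sum_{\deg f\le N}\frac{\Lambda(f)\chi(f)}{\deg f\sqrt{|f|}}\,e^{-\deg f/N}\Bigl(1-\frac{\deg f}{N}\Bigr)+\frac{2m}{N}.
\]
The paper takes this from Lemma~3.1 of David--Florea--Lal\'in. It is Soundararajan's method: compare $1/2$ with $\sigma_0=1/2+1/(N\log q)$, which produces $|f|^{-1/(N\log q)}=e^{-\deg f/N}$, and use the Fej\'er-weighted explicit formula.

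In your per-zero language, it amounts to
\[
 \log|1-e^{i\gamma}|+\Rea\sum_{n\le N}\frac{e^{-n/N}(1-n/N)}{n}e^{in\gamma}\le\frac1N\qquad(\gamma\in\mathbb R).
\]
You can prove this directly. Put $t=e^{-1/N}$ and $w=te^{i\gamma}$. Then
\[
 \sum_{n\le N}\frac{1-n/N}{n}w^n=-\log(1-w)-\frac1N\frac{w}{1-w}+\frac1N\int_0^w\frac{\xi^N\,d\xi}{(1-\xi)^2},
\]
together with
\[
 \Rea\frac{w}{1-w}=-\frac12+\frac{1-t^2}{2|1-w|^2},
 \qquad
 \log|1-e^{i\gamma}|-\log|1-w|\le\frac1{2N}-\frac{(1-t)^2}{2|1-w|^2}.
\]
The two negative Poisson-kernel terms dominate the integral. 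This uses $|1-re^{i\gamma}|\ge|1-w|/2$ for $0\le r\le t$, and $4t^{N+1}/(N(N+1))\le(1-t)^2/2+(1-t^2)/(2N)$ for every $N\ge1$. Summing over the $2m-2$ nontrivial zeros gives at most $2m/N$.

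When you insert your explicit formula, its $-1$ term adds $\sum_{n\le N}e^{-n/N}(1-n/N)n^{-1}q^{-n/2}\le-\log(1-q^{-1/2})$. The trivial factor $\log(1-q^{-1/2})$ cancels exactly this term. With this inequality in place, your Steps 3--4 complete the proof as in the paper.
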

\begin{proof}
The logarithmic-derivative argument of
\cite[Lemma~3.1]{DFL21}, before its prime-power terms are separated,
gives, with conductor degree $2m$,
\begin{equation}\label{eq:log-upper}
 \log|\Lq(1/2,\chi)|\le
 \Rea\sum_{\deg f\le N}
 \frac{\Lambda(f)\chi(f)}{\deg f\sqrt{|f|}}
 e^{-\deg f/N}(1-\deg f/N)+\frac{2m}{N}.
\end{equation}
The logarithmic-derivative argument uses the shift $1/(N\log q)$
and the fact that the reciprocal zeros of the completed $L$-polynomial
have modulus $q^{1/2}$; the zero contribution is then nonpositive. For the
even characters considered here, the factor $1-q^{-s}$ is retained.
The terms of exponent at least three have absolute sum at most
\[
 \sum_P\sum_{v\ge3}\frac1{v|P|^{v/2}}
 \le\frac{-\log(1-q^{-1/2})}{3(1-q^{-1/2})}<1.
\]
Since $\chi^2$ is nonprincipal of conductor degree $2m$, its explicit
formula and the modulus $q^{1/2}$ of its reciprocal zeros imply
$|\sum_{\deg P=d}\chi(P)^2|\ll_q mq^{d/2}/d$.
The contribution of square terms of degree $d>B$ is therefore
$O_q(mq^{-B/2}/(B+1))$. For $d\le B$, replacing
$e^{-2d/N}(1-2d/N)$ by one costs $O(B/N)$, using
$\#\{P:\deg P=d\}\le q^d/d$. Hence, for $N\ge2B$,
\begin{equation}\label{eq:GZsquare-shortening}
 \frac12\sum_{\deg P\le N/2}
 \frac{e^{-2\deg P/N}(1-2\deg P/N)\chi(P)^2}{|P|}
 =Q_B(\chi)+O_q\left(\frac BN+\frac{mq^{-B/2}}{B+1}\right).
\end{equation}
This error tends uniformly to zero for $N=N_j$.
Also $2m/N_j=\theta_j^{-1}+o(1)$ uniformly, since
$m\theta_0^2\to\infty$. Combining these facts proves the lemma.
\end{proof}

\subsection{Finite-model estimates and Taylor errors}

Expectations are taken with respect to \eqref{eq:model}. For
$Z=\sum c_PX_P$ and $V=\sum|c_P|^2$, the real and imaginary
exponential estimates in Lemma~\ref{lem:model-bounds} give
\[
 \Pr(|Z|>v)\le4e^{-v^2/(4V)},\qquad
 \E|Z|^{2s}\le4(4V)^ss!,\qquad
 \E e^{t|Z|}\le4e^{t^2V}.
\]
Integration of the tail estimate gives the moment bound.
The case $V=0$ is immediate.

\begin{lemma}\label{lem:GZTaylor}
Suppose $U=Z_u+Q$, $W=Z_w+Q$, where $Z_u,Z_w$ are prime sums
with coefficient square sums at most $V$, and $|Q|\le D$.
No independence between $Q$ and the prime sums is needed. If
$\ell\ge10^{10}(V+D^2+1)$, then
\begin{equation}\label{eq:GZTaylor}
 \|E_\ell(U/2)E_\ell(-W/2)-e^{(U-W)/2}\|_2\le e^{-\ell}.
\end{equation}
Moreover, if $|z|\le K$ and $\ell\ge100K$, then
\begin{equation}\label{eq:GZpointTaylor}
 E_\ell(\pm z/2)=e^{\pm z/2}(1+\eta),\qquad
 |\eta|\le e^{-2\ell}.
\end{equation}
\end{lemma}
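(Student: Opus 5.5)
The plan is to handle \eqref{eq:GZpointTaylor} first, since it is a direct tail estimate, and then to build the $L^2$ bound \eqref{eq:GZTaylor} from it by splitting the probability space.

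\emph{Pointwise bound.} Write $E_\ell(w)=e^{w}-R_\ell(w)$ with $R_\ell(w)=\sum_{v>\ell}w^v/v!$. For $|w|\le K/2$ and $\ell\ge100K$, consecutive terms of the tail have ratio at most $|w|/(\ell+1)\le1/200$. Together with $v!\ge(v/e)^v$ this gives
\[
|R_\ell(w)|\le 2\frac{|w|^{\ell+1}}{(\ell+1)!}\le 2\Bigl(\frac{eK}{2(\ell+1)}\Bigr)^{\ell+1}\le 2(e/200)^{\ell+1}.
\]
Since $|e^{-w}|\le e^{K/2}\le e^{\ell/200}$, we get
\[
|\eta|=|R_\ell(w)e^{-w}|\le 2e^{\ell/200}(e/200)^{\ell+1}.
\]
This is below $e^{-2\ell}$ because $\log(200/e)>4.2>2+1/200$. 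If $K=0$, then $z=0$ and $\eta=0$.

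\emph{$L^2$ bound.} Put $K=\ell/100$ and split on the event $G=\{|U|\le K,\ |W|\le K\}$.

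On $G$, the pointwise bound gives
\[
E_\ell(U/2)E_\ell(-W/2)=e^{(U-W)/2}(1+\eta_1)(1+\eta_2).
\]
The error is therefore at most $3e^{-2\ell}|e^{(U-W)/2}|\le 3e^{-2\ell}e^{(|Z_u|+|Z_w|)/2+D}$. Its second moment is controlled by Cauchy--Schwarz and $\E e^{t|Z|}\le4e^{t^2V}$ with $t=2$:
\[
\E e^{|Z_u|+|Z_w|}\le(\E e^{2|Z_u|}\E e^{2|Z_w|})^{1/2}\le4e^{4V}.
\]
So the contribution of $G$ is $O(e^{-2\ell+2V+D})$, and this is far below $e^{-\ell}/2$ since $\ell\ge10^{10}(V+D^2+1)$.

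Off $G$, we have $|Z_u|$ or $|Z_w|$ larger than $K-D\ge K/2$, because $D\le\sqrt{\ell}/10^5\ll K$. On this event we bound crudely:
\[
|E_\ell(U/2)|\le e^{|U|/2},\qquad |e^{(U-W)/2}|\le e^{(|U|+|W|)/2}.
\]
The integrand is then at most $2e^{|Z_u|+|Z_w|+2D}$. Applying Cauchy--Schwarz again, and using the tail bound $\Pr(|Z|>K/2)\le4e^{-K^2/(16V)}$ together with the exponential moment $\E e^{4|Z|}\le4e^{16V}$, the contribution is
\[
\ll e^{4D}e^{8V}\,e^{-K^2/(32V)}.
\]
Here $K^2/V=\ell^2/(10^4V)\ge10^6\ell$, since $V\le\ell/10^{10}$. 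The Gaussian factor is therefore $\le e^{-30\ell}$, which absorbs $e^{8V+4D}$ and the constants. If $V=0$, the complement of $G$ is empty. Taking square roots gives a total below $e^{-\ell}$.

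\emph{Main obstacle.} The delicate point is the bookkeeping of constants. We must check that the deterministic bound $|Q|\le D$, with no independence between $Q$ and the prime sums, still leaves the threshold $K-D\ge K/2$. We must also check that all factors $e^{O(V+D)}$ are absorbed by $e^{-2\ell}$ and $e^{-c\ell^2/V}$. The hypothesis $\ell\ge10^{10}(V+D^2+1)$ is exactly what makes both work with room to spare.
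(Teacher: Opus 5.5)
Your argument is correct, but it is organized differently from the paper's proof. The paper does not split the probability space. It bounds the difference everywhere by the Taylor remainder $e^{(|U|+|W|)/2}\bigl((|U|/2)^n+(|W|/2)^n\bigr)/n!$ with $n=\ell+1$, and separates off the exponential factor by H\"older. It then controls $(\E|U|^{4n})^{1/(4n)}\le8\sqrt{nV}+D$ through the $L^p$ bound of Lemma~\ref{lem:model-bounds} and Minkowski. It concludes with the numerical inequality $4e^{4V+2D}\bigl(8e\sqrt{(V+D^2+1)/n}\bigr)^n\le e^{-\ell}$. In the paper, the pointwise estimate is proved separately and plays no role in the $L^2$ bound.

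You instead prove the pointwise estimate first and reuse it on the event $\{|U|,|W|\le\ell/100\}$. On the complement, you combine crude exponential bounds with the sub-Gaussian tail $\Pr(|Z|>v)\le4e^{-v^2/(4V)}$ via Cauchy--Schwarz. This avoids moments of order $4n$ altogether. It also makes the structure of the error transparent: the pointwise Taylor error on the typical set plus a tail of size $e^{-c\ell^2/V}$. The cost is a case split. The paper's single chain of inequalities is shorter and gives a smaller final bound, since your good-set term is only of size $e^{-2\ell+O(V+D)}$.

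There are two harmless slips. Off $G$ the squared integrand is at most $4e^{|Z_u|+|Z_w|+2D}$, not $2e^{|Z_u|+|Z_w|+2D}$. And since $U-W=Z_u-Z_w$, the factor $e^{D}$ in your bound for $|e^{(U-W)/2}|$ is unnecessary.
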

\begin{proof}
Use $|E_\ell(z)|\le e^{|z|}$ and
$|E_\ell(z)-e^z|\le e^{|z|}|z|^{\ell+1}/(\ell+1)!$.
With $n=\ell+1$, the difference in \eqref{eq:GZTaylor} is at most
\[
 \frac{e^{(|U|+|W|)/2}}{n!}
 \big((|U|/2)^n+(|W|/2)^n\big).
\]
H\"older and the preceding exponential bounds give a factor at most
$\sqrt2e^{4V+D}$ for the exponential in its $L^2$ estimate.
Also $(\E|U|^{4n})^{1/(4n)}\le8\sqrt{nV}+D$, and the same
holds for $W$, by Lemma~\ref{lem:model-bounds} and Minkowski.
The norm is therefore at most
\[
 4e^{4V+2D}
 \left(8e\sqrt{\frac{V+D^2+1}{n}}\right)^n\le e^{-\ell}.
\]
The assumed lower bound on $\ell$ proves the last inequality.
For the pointwise statement, the relative error is at most
$e^K(eK/(2n))^n\le e^{-2\ell}$.
\end{proof}

Let $V_r=\sum_{\deg P\in I_r}|P|^{-1}$.
The prime polynomial bound gives $V_0\le1+\log(2m)$ and $V_r\le2$ for
$r\ge1$, uniformly for sufficiently large $m$. In the first block
$|Q_B(X)|\le D=(1+\log B)/2$. Consequently the hypothesis of
Lemma~\ref{lem:GZTaylor} holds for all blocks: for the first block
$\ell_0\gg(\log m)^{3/2}\gg V_0+D^2+1$, and for the later
blocks $\ell_r\ge100e^{153/4}>3\cdot10^{10}$. Furthermore,
\begin{equation}\label{eq:GZsmall-errors}
 \sum_{r=0}^J e^{-\ell_r}<10^{-3}.
\end{equation}
Indeed, with $a=100e^{153/4}$, we have
$\ell_r\ge ae^{3(J-r)/4}\ge a(1+3(J-r)/4)$, and hence
\[
 \sum_{r=0}^J e^{-\ell_r}
 \le\frac{e^{-a}}{1-e^{-3a/4}}<10^{-3}.
\]
This bound is uniform in $J$.

For $j\le J$, put
\[
 W_r(j)=\sum_{\deg P\in I_r}
 \frac{(a_{N_j}(P)-a_{N_J}(P))^2}{|P|}\quad(r\le j).
\]
Since $|a_{N_j}(P)-a_{N_J}(P)|\le2\deg P/N_j$,
\begin{equation}\label{eq:GZvariance}
 \sum_{r\le j}W_r(j)
 \le\frac4{N_j^2}\sum_{d\le N_j}d\le4.
\end{equation}
The common prime-square term cancels in $U-W$. Hence
\[
 \|e^{(U-W)/2}\|_2\le e^{W_r(j)/4},
\]
by Lemma~\ref{lem:model-bounds}. Applying
\eqref{eq:GZTaylor} to each block gives
\begin{equation}\label{eq:GZmatched}
 \prod_{r\le j}\E\left|
 E_{\ell_r}(\mathcal P_{r,N_j}(X)/2)
 E_{\ell_r}(-\mathcal P_{r,N_J}(X)/2)\right|^2
 \le e^3.
\end{equation}
Here the product of $(e^{W_r(j)/4}+e^{-\ell_r})^2$ is at most
$ e^{\frac12\sum W_r(j)+2\sum e^{-\ell_r}}<e^3$.

\subsection{The good set and the later exceptional blocks}

Let $\mathcal T_r$ be the set where
$|\mathcal P_{r,N_u}(\chi)|\le K_r$ for every $u\ge r$, and put
\[
 \mathcal T=\bigcap_{r=0}^J\mathcal T_r,\qquad
 \mathcal B_0=\mathcal T_0^c,\qquad
 \mathcal B_n=\left(\bigcap_{r<n}\mathcal T_r\right)\cap
 \mathcal T_n^c\quad(1\le n\le J).
\]
The sets $\mathcal T,\mathcal B_0,\ldots,\mathcal B_J$ partition the family. Equations
\eqref{eq:GZshort} and \eqref{eq:GZpointTaylor} imply
\begin{equation}\label{eq:GZgood}
 |\Lq(1/2,\chi)M_G(\chi)|\le e^{1/\theta_J+3}<e^{e^{52}+3}
 \qquad(\chi\in\mathcal T).
\end{equation}
The constant $3$ absorbs the $2$ in \eqref{eq:GZshort} and the
uniformly small product of Taylor errors.

\begin{proposition}\label{prop:GZlater}
Put $s_n=\lfloor1/(8\theta_n)\rfloor$ for $n\ge1$. Then
\begin{align}\label{eq:GZbadblock}
 q^{-2m}\sum_{\chi\in\mathcal B_n}|\Lq(1/2,\chi)M_G(\chi)|
 \le{}&e^{20}(J-n+1)e^{1/\theta_{n-1}+4(J-n)}
 \left(\frac{16s_n}{K_n^2}\right)^{s_n}.
\end{align}
Consequently
\begin{equation}\label{eq:GZbadlater}
 q^{-2m}\sum_{n=1}^J\sum_{\chi\in\mathcal B_n}|\Lq(1/2,\chi)M_G(\chi)|
 \le\frac{e^{20}}{(1-e^{-1})^2}e^{-0.03/\theta_J}.
\end{equation}
\end{proposition}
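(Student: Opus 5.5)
On $\mathcal B_n$ with $n\ge1$ the blocks $r<n$ satisfy the size bounds, while some scale $u\ge n$ has $|\mathcal P_{n,N_u}(\chi)|>K_n$. I would begin with a union bound over the $J-n+1$ choices of $u$; this accounts for the factor $(J-n+1)$. For a fixed $u$ I would insert the nonnegative weight $(|\mathcal P_{n,N_u}(\chi)|/K_n)^{2s_n}\ge1$. Next I would bound $|L_q(1/2,\chi)|$ by Lemma~\ref{lem:GZshort} at the shorter scale $j=n-1$:
\[
 |\Lq(1/2,\chi)|\le e^{\theta_{n-1}^{-1}+2}\prod_{r\le n-1}
 \bigl|\exp(\mathcal P_{r,N_{n-1}}(\chi))\bigr|.
\]
For $r\le n-1$ we are on $\mathcal T_r$, so $|\mathcal P_{r,N_{n-1}}|\le K_r$, and \eqref{eq:GZpointTaylor} replaces $|\exp(\mathcal P_{r,N_{n-1}})|=|e^{\mathcal P_{r,N_{n-1}}/2}|^2$ by $|E_{\ell_r}(\mathcal P_{r,N_{n-1}}/2)|^2$, losing only a factor $(1+e^{-2\ell_r})^2$. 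Because the prime-square term $Q_B$ is the same at every scale, the same replacement applies to the $r=0$ block. The result is a bound by
\[
 e^{\theta_{n-1}^{-1}+O(1)}\prod_{r<n}\bigl|E_{\ell_r}(\mathcal P_{r,N_{n-1}}/2)E_{\ell_r}(-\mathcal P_{r,N_J}/2)\bigr|^2
 \prod_{r\ge n}\bigl|E_{\ell_r}(-\mathcal P_{r,N_J}/2)\bigr|^2
 \Bigl(\frac{|\mathcal P_{n,N_u}|}{K_n}\Bigr)^{2s_n}.
\]
After this step the exceptional-set restriction is dropped: the expression is nonnegative, so I would sum it over the whole family.

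This quantity is a nonnegative finite polynomial in $\chi(P)$ and $\overline{\chi(P)}$. I would check its degree: it is $L$ from Lemma~\ref{lem:GZlength} plus at most $2N_{n-1}\ell_{n-1}$ extra from the matched factor, plus $2s_nN_n\le m/4$ from the weight. The radical condition of Lemma~\ref{lem:orthog} then holds with room to spare. Since $|\mathcal C_4(g)|\le q^{2m}$, the normalized family sum is at most the average over all monic $R$ of degree $m$, and that average equals the expectation under the model \eqref{eq:model}. In the model the blocks are independent because their primes are disjoint, so the expectation factors block by block.

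The blocks $r<n$ are controlled by \eqref{eq:GZmatched} at $j=n-1$, giving at most $e^3$. For $n<r\le J$, I would use $|E_\ell(z)|\le e^{|z|}$ together with $\E e^{|Z|}\le4e^{V_r}\le4e^2$. This produces a loss of about $e^{4}$ per block, hence the factor $e^{4(J-n)}$; the precise exponent may need bookkeeping, with any constant surplus absorbed into $e^{20}$. Block $n$ itself carries both the weight and $|E_{\ell_n}(-\mathcal P_{n,N_J}/2)|^2$. Here I would apply Cauchy--Schwarz. The factor $\bigl(\E|E_{\ell_n}|^4\bigr)^{1/2}$ is $O(e^{2V_n})=O(e^4)$. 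The factor $\bigl(\E|\mathcal P_{n,N_u}|^{4s_n}\bigr)^{1/2}$ is at most $2(4V_n)^{s_n}\sqrt{(2s_n)!}\le 2(16s_n)^{s_n}$, using $V_n\le2$ and the model moment bound with $s\to2s_n$. Dividing by $K_n^{2s_n}$ gives $(16s_n/K_n^2)^{s_n}$ up to constants absorbed into $e^{20}$. This establishes \eqref{eq:GZbadblock}.

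The summation step \eqref{eq:GZbadlater} is where I expect most of the care to be needed. Since $K_n^2=\theta_n^{-3/2}$ and $s_n\approx1/(8\theta_n)$, we have $16s_n/K_n^2\approx2\theta_n^{1/2}$, so the last factor is about $\exp\bigl(-\tfrac1{8\theta_n}\log\tfrac1{2\sqrt{\theta_n}}\bigr)$. With $\theta_n\le e^{-51}$ this is roughly $\exp(-\tfrac{25}{8\theta_n})$. It must beat $e^{1/\theta_{n-1}}=e^{e/\theta_n}$ and still leave a margin of at least $0.03/\theta_J$. As a rough check, $\tfrac{25}{8}-e\approx0.41$, so the remaining exponent is about $-0.4/\theta_n$, and $0.4/\theta_n\ge0.4/\theta_J$ covers both $4(J-n)$ and $\log(J-n+1)$. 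To get the stated form I would write $\theta_n=\theta_Je^{-(J-n)}$ and bound the exponent by $-0.03/\theta_J-(J-n)-\log(J-n+1)$, using $e^{J-n}\ge1+(J-n)$. Summing then yields the geometric factor $(1-e^{-1})^{-2}$. The floor in $s_n$ and the crude constants have to be tracked so that the margin survives, and this is the step most likely to require adjustment.
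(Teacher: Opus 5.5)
Your proposal is correct and follows essentially the same route as the paper: you exponentiate Lemma~\ref{lem:GZshort} at scale $N_{n-1}$, replace the controlled exponentials by $|E_{\ell_r}(\mathcal P_{r,N_{n-1}}/2)|^2$, and majorize $1_{\mathcal B_n}$ by $\sum_{u\ge n}|\mathcal P_{n,N_u}/K_n|^{2s_n}$. You then pass to the model via Lemma~\ref{lem:orthog}, bound the blocks by $e^3$, $4e^4(16s_n)^{s_n}$ and $e^4$ each, and sum over $t=J-n$. There is one small slip in your degree count, which does not affect the argument: $2s_nN_n\le m/2$ (not $m/4$), and the matched factor contributes up to $2\sum_{r<n}\ell_rN_r\le L$, not $2\ell_{n-1}N_{n-1}$, so the budget is the paper's $2L+m/2<0.510497m<m$, which still suffices. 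In the summation step, your use of $s_n\approx 1/(8\theta_n)$ gives a larger margin than the paper's cruder $s_n\ge1/(9\theta_n)$, which leaves $0.038$; either works.
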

\begin{proof}
On $\mathcal B_n$ use the shorter scale $N_{n-1}$ in
\eqref{eq:GZshort}. For $r<n$ the prime blocks are controlled, so
\eqref{eq:GZpointTaylor} majorizes their exponentials by
$|E_{\ell_r}(\mathcal P_{r,N_{n-1}}/2)|^2$, with total correction
at most $e$. Insert
\[
 1_{\mathcal B_n}\le
 \sum_{u=n}^J|\mathcal P_{n,N_u}/K_n|^{2s_n}
\]
on the set where earlier blocks are controlled. The resulting
nonnegative finite polynomial is
\[
 \prod_{r<n}|E_{\ell_r}(\mathcal P_{r,N_{n-1}}/2)|^2
 \prod_{r=0}^J|E_{\ell_r}(-\mathcal P_{r,N_J}/2)|^2
 |\mathcal P_{n,N_u}/K_n|^{2s_n}.
\]
Its monomial radical degrees are bounded by
\begin{equation}\label{eq:GZdegreecheck}
 2L+2s_nN_n\le2L+m/2<0.510497m<m.
\end{equation}
Extend its sum to all monic $R\in\AQ$ of degree $m$ and apply
Lemma~\ref{lem:orthog}. The expectation then factors over blocks.
The product over $r<n$ is at most $e^3$ by
\eqref{eq:GZmatched}.

In block $n$, write $Z=\mathcal P_{n,N_u}(X)$ and
$W=\mathcal P_{n,N_J}(X)$. Both variances are at most two, and
H\"older gives
\begin{align*}
 \E\{|Z|^{2s}|E_{\ell_n}(-W/2)|^2\}
 &\le(\E|Z|^{4s})^{1/2}(\E e^{2|W|})^{1/2}\\
 &\le4e^4(16s)^s.
\end{align*}
Here $\sqrt{(2s)!}\le(2s)^s$.
Each later block contributes at most $\E e^{|W|}\le4e^2<e^4$.
The prefactors from \eqref{eq:GZshort} and
\eqref{eq:GZpointTaylor} are at most $e^{2+1/\theta_{n-1}}$ and
$e$, respectively. Multiplying these estimates and summing over $u$
gives \eqref{eq:GZbadblock}.

Now $1/(9\theta_n)\le s_n\le1/(8\theta_n)$, and
$16s_n/K_n^2\le2\sqrt{\theta_n}<1$.
Writing $t=J-n$ and using $\theta_{n-1}=\theta_n/e$, the logarithm
of the scale-dependent factors is at most
\[
 \frac{e+(\log2)/9-(51+t)/18}{\theta_n}+4t
 \le-\frac{0.03}{\theta_n}-t.
\]
Indeed,
\[
 \frac{51}{18}-e-\frac{\log2}{9}>0.038035>0.03,
\]
and $\theta_n\le e^{-51}$ ensures that $t/(18\theta_n)\ge5t$.
Since $\theta_n\le\theta_J$, summing
$(t+1)e^{-0.03/\theta_n-t}$ and using
$\sum_{t\ge0}(t+1)e^{-t}=(1-e^{-1})^{-2}$ gives
\eqref{eq:GZbadlater}.
\end{proof}

\subsection{The initial exceptional block}

\begin{lemma}\label{lem:GZrough}
There are fixed constants $C,C'$ such that
\[
 q^{-2m}\sumchi|\Lq(1/2,\chi)|^4\ll_q m^C,
 \qquad q^{-2m}\sumchi|M_G(\chi)|^4\ll_q m^{C'}.
\]
\end{lemma}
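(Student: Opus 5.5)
For the fourth moment of $\Lq(1/2,\chi)$ we argue pointwise. By Lemma~\ref{lem:GZshort} with $j=0$, or directly from \eqref{eq:log-upper} with $N=N_0$, we have
\[
 \log|\Lq(1/2,\chi)|\le\Rea\mathcal P_{0,N_0}(\chi)+\theta_0^{-1}+2.
\]
Since $\theta_0^{-1}=(\log m)^2$, this pointwise form is too lossy. We therefore run the argument at a scale $N$ with $2m/N$ bounded, for instance $N=\lfloor m/100\rfloor$ (so that $2B<N$). This gives
\[
 |\Lq(1/2,\chi)|^4\le e^{O(1)}\exp\Bigl(4\Rea\mathcal P(\chi)\Bigr),
\]
where
\[
 \mathcal P(\chi)=\sum_{\deg P\le N}a_N(P)\chi(P)|P|^{-1/2}+Q_B(\chi)+o(1).
\]
The exponential is not a polynomial, so we replace it by a truncated Taylor series. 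Split the prime sum into the same type of blocks as in \eqref{eq:GZblocks}. On the set where every block is at most its threshold $K$, Lemma~\ref{lem:GZTaylor} in its pointwise form \eqref{eq:GZpointTaylor} gives
\[
 e^{4\Rea\mathcal P}\ll\prod_r\bigl|E_{\ell}(2\mathcal P_r)\bigr|^2,
\]
and $|Q_B|\le D\ll\log\log m$ contributes at most $e^{O(D)}=(\log m)^{O(1)}$.

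Rather than build the full partition again, we use the simplest route, the bound $\exp(x)\le\sum_{k\le K}x^k/k!+\exp(x)1_{x>K'}$. A cleaner alternative avoids Taylor cutoffs altogether. By Lemma~\ref{lem:orthog}, for $k$ with $2kN\le m$, the moments $q^{-2m}\sum_\chi|\mathcal P_0(\chi)|^{2k}$ are bounded by the model moments $\E|Z|^{2k}\le4(4V)^kk!$, where $V\le1+\log(2m)$. We also have the trivial bound $|\Lq(1/2,\chi)|\le q^{\varepsilon m}$. Taking $k\asymp m/N$ large but bounded, Chebyshev shows that the set where $|\mathcal P|>\log m$ has measure $m^{-c k}$, which is tiny, though not tiny enough against $q^{\varepsilon m}$. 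Hence the set decomposition is genuinely needed, and we adopt the Soundararajan-style iteration over scales $N_j=m/2^j$. On the set $\mathcal S_j$, where the first $j-1$ scales are fine but scale $j$ has $|\mathcal P_{N_j}|>V/\log$ threshold, we use the bound at scale $N_{j-1}$, namely
\[
 |L|^4\le e^{8\cdot 2^{j}}\,|E(4\mathcal P_{N_{j-1}})|^2 .
\]
Multiplying by the moment indicator $|\mathcal P_{N_j}/T|^{2k_j}$ with $k_j\asymp 2^j/10$, the total degree stays below $m$, so Lemma~\ref{lem:orthog} applies. The model expectation is then at most $e^{O(2^j)}(\log m)^{O(1)}\cdot(Ck_jV/T^2)^{k_j}$. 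Choosing $T=(\log m)^{2}$ makes this summable in $j$, and the range $2^j\le m/(\log m)^{A}$ is handled by the trivial bound together with the measure. The outcome is the bound $m^C$. The main obstacle is bookkeeping the exponent $e^{O(2^j)}$ against the gain $(V/T^2)^{k_j}$, which forces the threshold $T$ to be a power of $\log m$.

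For $M_G$ no $L$-function is involved, and $|M_G|^4$ is already a nonnegative finite polynomial in $\chi$ and $\bar\chi$. The catch is that its radical degree is at most $4L<0.021m<m$ by \eqref{eq:GZlength}. So we extend the sum to all $R$ of degree $m$ and apply Lemma~\ref{lem:orthog}, which converts the average into the model expectation; the model factors over blocks. Using $|E_\ell(z)|\le e^{|z|}$, block $r$ contributes at most $\E e^{4|\mathcal P_r|}\le4e^{16V_r+4D1_{r=0}}$ by Lemma~\ref{lem:model-bounds}. With $V_0\le1+\log(2m)$, $V_r\le2$, and $D\ll\log\log m$, the product is at most
\[
 m^{16}e^{O(J)}(\log m)^{O(1)}\ll m^{C'},
\]
since $J=O(\log\log m)$ and $4^{J+1}$ is likewise a power of $\log m$.
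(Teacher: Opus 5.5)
Your bound for $q^{-2m}\sum_\chi|M_G(\chi)|^4$ is correct and is the paper's argument: $4L<m$, Lemma~\ref{lem:orthog}, $|E_\ell(-z/2)|^8\le e^{4|z|}$, and factorization over blocks give $e^{4D}4^{J+1}e^{16\sum_rV_r}\ll m^{C'}$. The gap is in the fourth moment of $\Lq(1/2,\chi)$: the scale iteration you settle on does not close, for three reasons.

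(i) On $\mathcal S_j$ you insert $|E(4\mathcal P_{N_{j-1}})|^2$, the Taylor polynomial of the exponential of a whole prime sum of length $N_{j-1}=m/2^{j-1}$. For \eqref{eq:GZpointTaylor} to apply on $\{|\mathcal P_{N_{j-1}}|\le T\}$, its cutoff must be $\gg T=(\log m)^2$. Its radical degree is then $\gg(\log m)^2m/2^j$, which exceeds $m$ whenever $2^j\lesssim(\log m)^2$, so Lemma~\ref{lem:orthog} is unavailable. The paper's mollifier avoids this only because its thresholds $K_r$ shrink on longer blocks, which makes $\sum_r\ell_rN_r$ a small multiple of $m$.

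(ii) With a threshold independent of $j$, no choice of $k_j$ bounds the probability that scale $j$ fails by anything better than $\exp(-cT^2/V)=\exp(-c(\log m)^3)$. Falling back to $N_{j-1}$ costs $e^{4\cdot2^j}$, so for $2^j\gg(\log m)^3$ you lose. The trivial bound $|\Lq(1/2,\chi)|\le2^{2m}$ is far worse, so ``the trivial bound together with the measure'' does not handle these scales.

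(iii) The ends are uncovered. If the longest scale already fails, there is no scale to fall back on and no admissible high moment ($2kN_0\le m$ with $N_0\approx m$ forces $k=0$). On the set where every scale is fine, a pointwise bound gives only $e^{4T}=e^{4(\log m)^2}$, and averaging instead runs into (i). In your ordering the fallback scale is longer than the failing one; it has to be the other way round.

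The missing idea is that the threshold must grow linearly with the level while the scale shrinks inversely; then no Taylor expansion is needed at all. The paper uses Soundararajan's level-set argument. For $256(1+\log(2m))\le V\le2m\log2$, put $N=\lfloor16m/V\rfloor$ and $s=\lfloor V/64\rfloor$. Then $2m/N\le V/4$, $2sN\le m/2$, and the prime-square and higher prime-power terms in \eqref{eq:log-upper} total at most $(1+\log(2m))/2+1$. Hence $\log|\Lq(1/2,\chi)|>V$ forces $\Rea Z_N(\chi)>V/2$. Markov's inequality with the $2s$-th moment, transferred to the model by Lemma~\ref{lem:orthog}, gives $q^{-2m}\mathcal N(V)\le(2(1+\log(2m))/V)^{V/128}\le e^{-16V}$ for $V\ge2e^{2048}(1+\log(2m))$. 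Below that range use $\mathcal N(V)\le q^{2m}$; above $2m\log2$ the set is empty. Integrating against $4e^{4V}\,dV$ gives $m^C$. Your partition could be repaired in this spirit: use a threshold $\asymp2^j$ at scale $m/2^j$, pointwise bounds instead of Taylor polynomials, detect failure at the longer scale, and take the bound at the shorter one. As written, however, the first estimate is not proved.
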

\begin{proof}
For the first bound, suppose
$256(1+\log(2m))\le V\le2m\log2$, put $N=\lfloor16m/V\rfloor$ and
$s=\lfloor V/64\rfloor$. Then $2sN\le m/2$,
$s\ge V/128$, and $2m/N\le V/4$. The prime-square absolute
contribution to \eqref{eq:log-upper} is at most $(1+\log(2m))/2$, and the higher
powers contribute less than one. Thus $\log|L|>V$ forces the
weighted prime sum $Z_N(\chi)$ to have real part greater than $V/2$.
Its variance is at most $1+\log(2m)$. Finite orthogonality and
Lemma~\ref{lem:model-bounds} imply, if $\mathcal N(V)$ counts
these characters,
\[
 q^{-2m}\mathcal N(V)\le
 \left(\frac{128s(1+\log(2m))}{V^2}\right)^s
 \le\left(\frac{2(1+\log(2m))}{V}\right)^{V/128}.
\]
For $V\ge2e^{2048}(1+\log(2m))$ this is at most $e^{-16V}$. Below that
threshold use $\mathcal N(V)\le q^{2m}$, and above
$2m\log2$ use $\mathcal N(V)=0$, since the factorization of the
completed $L$-polynomial gives $|L_q(1/2,\chi)|\le2^{2m}$.
Integration against $4e^{4V}\,dV$ proves a fixed polynomial bound.

For the second estimate, $4L<0.020994m<m$ permits the use of
Lemma~\ref{lem:orthog}. Since
$|E_\ell(-z/2)|^8\le e^{4|z|}$, its normalized average is at most
\[
 e^{4D}4^{J+1} e^{16\sum_{r=0}^JV_r}.
\]
Here $D=O(\log\log m)$, $J=O(\log\log m)$, and
$\sum V_r\le1+\log(2m)$. This is bounded by a fixed power of $m$.
\end{proof}

\begin{lemma}\label{lem:GZinitial}
One has
\begin{equation}\label{eq:GZinitial}
 \sum_{\chi\in\mathcal B_0}|\Lq(1/2,\chi)M_G(\chi)|=o(q^{2m}).
\end{equation}
\end{lemma}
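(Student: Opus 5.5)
By Cauchy--Schwarz twice (equivalently, H\"older with exponents $4,4,2$), we bound the sum by
\[
 \sum_{\chi\in\mathcal B_0}|\Lq(1/2,\chi)M_G(\chi)|
 \le\Bigl(\sumchi|\Lq(1/2,\chi)|^4\Bigr)^{1/4}
 \Bigl(\sumchi|M_G(\chi)|^4\Bigr)^{1/4}
 |\mathcal B_0|^{1/2}.
\]
Lemma~\ref{lem:GZrough} bounds the first two factors by $q^{m}m^{(C+C')/4}$. It therefore suffices to show that $q^{-2m}|\mathcal B_0|$ decays faster than any fixed power of $m$, say $q^{-2m}|\mathcal B_0|\ll\exp(-c(\log m)^{3/2})$.

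To count $\mathcal B_0$, note that $\chi\in\mathcal B_0$ means $|\mathcal P_{0,N_u}(\chi)|>K_0$ for some $u\in\{0,\dots,J\}$. Here $K_0=\theta_0^{-3/4}=(\log m)^{3/2}$, while the first block has variance $V_0\le1+\log(2m)$ and $|Q_B|\le D=(1+\log B)/2=O(\log\log m)$. Since $D<K_0/2$ for large $m$, the condition forces $|Z_u(\chi)|>K_0/2$, where $Z_u$ is the prime part of $\mathcal P_{0,N_u}$. We then use the Markov bound
\[
 1_{\mathcal B_0}\le\sum_{u=0}^J|2Z_u/K_0|^{2s},
\]
with $s=\lfloor c_0(\log m)^{2}\rfloor$, or some comparable choice. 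Each term is a nonnegative character polynomial whose monomials have radical degree at most $2sN_0=4s\,m(\log m)^{-2}\le m/2$ when $c_0\le1/8$, so Lemma~\ref{lem:orthog} applies. After extending the count to all $R$ of degree $m$, the normalized average equals the model expectation, and the moment bound after Lemma~\ref{lem:model-bounds} gives $\E|Z_u|^{2s}\le4(4V_0)^ss!$. This yields
\[
 q^{-2m}|\mathcal B_0|\ll(J+1)\Bigl(\frac{64sV_0}{K_0^2}\Bigr)^{s}
 \le(J+1)\Bigl(\frac{64c_0(1+\log 2m)}{\log m}\Bigr)^{s}.
\]
Taking $c_0$ small, say $c_0=1/(128e)$, makes the base at most $e^{-1}$ for large $m$. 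The bound is then $\ll(\log\log m)\,e^{-s}=\exp(-c(\log m)^2)$, which beats every power of $m$.

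Combining the two estimates, the sum is $\ll q^{2m}m^{(C+C')/4}\exp(-\tfrac c2(\log m)^2)=o(q^{2m})$.

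The main thing to check is that the moment exponent $s$ can be taken large enough to overcome the polynomial loss $m^{(C+C')/4}$ while keeping $2sN_0<m$. This works because $N_0\asymp m/(\log m)^2$, which leaves room for $s$ of order $(\log m)^2$. The ratio $K_0^2/V_0\asymp(\log m)^2$ is exactly what makes the base of the power small. One also has to handle the nonprime square term correctly, which we do by absorbing it via $D\le K_0/2$ rather than including it in the moment. Positivity of the inserted polynomials justifies extending the sum from admissible $F$ to all $R$.
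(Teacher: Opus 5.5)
Your proof is correct and follows the paper's argument: you absorb $Q_B$ using $D=o(K_0)$, bound $q^{-2m}|\mathcal B_0|$ by Markov with a high moment of $Z_u$ via Lemma~\ref{lem:orthog} and the model moment bound, and then apply H\"older with exponents $2,4,4$ together with Lemma~\ref{lem:GZrough}. The only difference is the moment exponent: you take $s\asymp(\log m)^2$, giving decay $\exp(-c(\log m)^2)$, where the paper takes $s_0=\lfloor K_0\rfloor$, giving $\exp(-c(\log m)^{3/2}\log\log m)$; both choices respect $2sN_0<m$ and beat any fixed power of $m$.
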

\begin{proof}
Write $\mathcal P_{0,N_u}=Z_u+Q_B$.
Since $|Q_B|\le D=o(K_0)$, failure in the first block implies
$|Z_u|>K_0/2$ for some $u$. Take $s_0=\lfloor K_0\rfloor$.
The degree $2s_0N_0=o(m)$ permits exact finite orthogonality.
Markov and the model moment bound give
\[
 q^{-2m}|\mathcal B_0|\le
 4(J+1)\left(\frac{16(1+\log(2m))s_0}{K_0^2}\right)^{s_0}
 \le e^{-c(\log m)^{3/2}\log\log m}
\]
for some absolute $c>0$ and sufficiently large $m$.
H\"older with exponents $2,4,4$ bounds the left side of
\eqref{eq:GZinitial} by
\[
 |\mathcal B_0|^{1/2}
 \left(\sumchi|\Lq(1/2,\chi)|^4\right)^{1/4}
 \left(\sumchi|M_G(\chi)|^4\right)^{1/4}.
\]
Lemma~\ref{lem:GZrough} and the preceding bound for
$|\mathcal B_0|$ now give \eqref{eq:GZinitial}.
\end{proof}

\subsection{The mollified first moment}

For $\ell=4$, the arithmetic factor in \eqref{intro:arithmetic} is
\begin{equation}\label{eq:GZA}
 A_{4,q,G}=\prod_P\left(1+\rho_P\sum_{k\ge1}d_{4k}|P|^{-2k}\right).
\end{equation}
All $d_v$ are nonnegative and at most one: the exponent in
\eqref{eq:GZlocal} is coefficientwise bounded by
$\sum_{v\ge1}x^v/v$. Also $d_4=1/4$. Hence the product converges
absolutely, with
\begin{equation}\label{eq:GZAbounds}
 1<A_{4,q,G}\le\zeta_q(2),\qquad
 1+\rho_P\sum_{k\ge1}d_{4k}|P|^{-2k}
 =1+\frac{\rho_P}{4|P|^2}+O(|P|^{-4}).
\end{equation}

\begin{proposition}\label{prop:GZfirst}
The mollifier \eqref{eq:GZmollifier} satisfies
\begin{equation}\label{eq:GZfirst}
 \sumchi\Lq(1/2,\chi)M_G(\chi)
 =(c_qA_{4,q,G}+o(1))q^{2m}.
\end{equation}
\end{proposition}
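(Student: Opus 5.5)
Expand $M_G(\chi)=\sum_{\deg h\le L}b_m(h)\chi(h)|h|^{-1/2}$ as in Lemma~\ref{lem:GZlength} and insert Theorem~\ref{input:first} termwise. Since $L/m<0.005248338<1/4$, Remark~\ref{rem:first-status} shows that the summed error terms are $O(q^{(2-\delta)m})$ for some $\delta>0$. This leaves
\[
 q^{2m}\sum_{\deg h\le L}\frac{b_m(h)}{\sqrt{|h|}}\mathcal M_{4,q}(h)
 =c_q q^{2m}\sum_{\deg h\le L}\frac{b_m(h)}{\sqrt{|h|\,|h^\ast|}}\sum_{B}\frac{\rho(hB)}{|B|^2}.
\]
The task is therefore to show that this deterministic main term equals $c_qA_{4,q,G}+o(1)$.

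For the main term I would read it as a model expectation. The weight $|hh^\ast|^{-1/2}\rho(hB)$ with $f=h^\ast B^4$ is the diagonal of the first moment. Concretely, define independent random variables $Y_P$, uniform on the fourth roots of unity. Attach to each $P$ the local factor $\sum_{j\ge0}|P|^{-j/2}Y_P^j$ (the Euler factor of $L$ at $1/2$), and include the coprimality weight $\rho_P$ on every term involving $P$. Then $\mathcal M_{4,q}(h)/c_q$ is $\E[\overline{Y}(h)\,\Lambda]$, where $\Lambda$ is the formal product of these weighted local factors. The main term becomes $c_q\E[\Lambda\cdot M_G(Y)]$, with $\overline{Y}(h)$ replaced by $Y(h)$. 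Everything then factors over primes, and within each prime over blocks. At a prime $P$ in block $r$, the local factor of $M_G$ is the Taylor truncation of $\exp(-a_{N_J}(P)Y_P|P|^{-1/2})$ times the prime-square correction $\exp(-Y_P^2/(2|P|))$ for $\deg P\le B$. Paired with the local $L$-factor $(1-Y_P|P|^{-1/2})^{-1}$, the untruncated local expectation is the constant term of
\[
 \frac{e^{-x-x^2/2}}{1-x},\qquad x=Y_P|P|^{-1/2}.
\]
Averaging over $Y_P$ picks out the powers divisible by $4$, which gives $1+\rho_P\sum_k d_{4k}|P|^{-2k}$ by \eqref{eq:GZlocal}. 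The product over $P$ is $A_{4,q,G}$.

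The deviations from this ideal come from three sources, and each must be shown to be $o(1)$ after multiplying over primes:
\begin{itemize}
 \item the factor $a_{N_J}(P)\ne1$, with $\sum_{\deg P\le N_J}(1-a_{N_J}(P))^2/|P|\ll1$, which needs to be controlled rather than merely bounded;
 \item the missing square correction for $B<\deg P$;
 \item primes with $\deg P>N_J$, where $M_G$ has no factor and the local expectation is $1+O(|P|^{-2})$, so their product tends to one.
\end{itemize}
The Taylor truncations are handled block by block. The $L^2$ estimate in Lemma~\ref{lem:GZTaylor} together with \eqref{eq:GZsmall-errors} shows that replacing $E_{\ell_r}$ by the exponential changes the expectation by a quantity bounded by $e^{-\ell_r}$ times a bounded factor. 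Exactness at the diagonal holds because, after the fourth-power condition, the linear and quadratic terms cancel exactly against $-x-x^2/2$ in the exponent, provided $a=1$.

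I expect the main obstacle to be the weights $a_{N_J}(P)$. With $a\ne1$ the linear terms no longer cancel, and the local factor becomes roughly $1+(1-a)^2/(2|P|)+\cdots$ from the $x^2$-type cross terms surviving the fourth-power average. Here I would check carefully that only exponents divisible by $4$ survive. A term such as $(1-a)x$ paired against $x^3$ from the $L$-factor survives, and it contributes $O((1-a)|P|^{-2})$, which is summable. Only the pure $x^4$-level terms matter, and they are $O(|P|^{-2})$. Since the $4$th-power condition kills every term of total degree less than $4$, every deviation is $O(|P|^{-2}\deg P/N_J)$ for $\deg P\le N_J$. Summing over $P$ gives $O(1/N_J)=o(1)$, and this settles the obstacle. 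The remaining point is to justify exchanging the sum over $B$ and $h$ by absolute convergence, using $|b_m(h)|\le1$ and $\sum_h|h|^{-1/2}|h^\ast|^{-1/2}$ restricted to $\deg h\le L$. Controlling this sum is routine via the Euler product.
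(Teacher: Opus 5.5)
\textbf{Verdict: genuine gap.} Your error-term step matches the paper. The main-term argument, however, breaks down at the Taylor truncations.

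You bound the cost of replacing each $E_{\ell_r}$ by the exponential by $e^{-\ell_r}$ times a bounded factor, then sum using \eqref{eq:GZsmall-errors}. That only bounds the total error by a fixed small constant, not by $o(1)$. When $J-r$ is bounded, $\theta_r\asymp1$, so $\ell_r=2\lceil50\theta_r^{-3/4}\rceil$ stays bounded (for instance $\ell_J<100e^{39}+2$). Hence $e^{-\ell_r}$ does not tend to zero as $m\to\infty$. As written, you obtain the main term only up to an error of order $10^{-3}$ times your factor, which does not prove \eqref{eq:GZfirst}.

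Relatedly, ``everything factors over primes, and within each prime over blocks'' is reversed. The diagonal pairing factors over blocks, but inside a block the truncation couples the primes.

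Two smaller repairs are also needed.
\begin{enumerate}
\item With $Y_P$ uniform on the fourth roots of unity, no $h$-independent $\Lambda$ satisfies $\mathcal M_{4,q}(h)/c_q=\E[\overline{Y}(h)\Lambda]$. Indeed $\overline Y(P^4)=\overline Y(1)$, while $\mathcal M_{4,q}(P^4)<\mathcal M_{4,q}(1)$ for even-degree $P$, because $\rho(hB)$ carries $\rho_P$ whenever $P\mid h$, even if $P\nmid B$. Take instead $X_P=0$ with probability $1-\rho_P$ and uniform on the fourth roots of unity otherwise. Then $\E[X(h)X(f)]=\rho(hf)$ if $hf$ is a fourth power and $0$ otherwise.
\item In block $0$ the companion factor is not bounded. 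The block-$0$ part of the $L$-product has $L^2$ norm of order $N_0^{1/2}$, although $e^{-\ell_0}$ still beats this.
\end{enumerate}

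\textbf{The missing idea.} The truncations never affect the coefficients that matter in the limit. The paper exploits this by dominated convergence rather than by a model computation.
\begin{itemize}
\item Fix $h$. Once every prime of $h$ has degree at most $B$ and $\ell_0\ge\Omega(h)$, only products of at most $\Omega(h)$ terms of $\mathcal P_{0,N_J}$ can produce $\chi(h)$. So $b_m(h)$ equals the coefficient of the untruncated exponential, namely $\prod_{P^v\Vert h}$ of the coefficient of $x^v$ in $e^{-a_{N_J}(P)x-x^2/2}$.
\item This tends to $b_\infty(h)$, whose prime series is $e^{-x-x^2/2}$.
\item The normalized main term is $\sum_h b_m(h)|hh^\ast|^{-1/2}\sum_B\rho(hB)|B|^{-2}$. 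The bound $|b_m(h)|\le1$, which you already have, is an $m$-independent summable majorant, since $\sum_h|hh^\ast|^{-1/2}=\prod_P\bigl(1+4|P|^{-2}(1-|P|^{-2})^{-1}\bigr)<\infty$. (The paper uses the majorant $\prod\alpha_v$ instead.)
\item Dominated convergence then gives $A_{4,q,G}$ through exactly your local computation. It handles the weights $a_{N_J}\ne1$, the missing square correction above $B$, and all truncations at once.
\end{itemize}

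If you prefer to keep your model framework, use the same majorant to show that all $h$ with a prime factor of degree $>N_0$ contribute $O\bigl(\sum_{\deg P>N_0}|P|^{-2}\bigr)=o(1)$. Blocks $r\ge1$ can then be discarded before any de-truncation. Only block $0$, where $\ell_0\to\infty$, needs an estimate of the type in Lemma~\ref{lem:GZTaylor}.
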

\begin{proof}
Put $\lambda=0.005248338$, so that $L<\lambda m<m/10$ by
Lemma~\ref{lem:GZlength}. Apply the existing estimate
\eqref{eq:first-input} to each coefficient of \eqref{eq:GZmollifier}.
The total error is
\[
 \ll_{q,\varepsilon}q^{(9/5+\varepsilon)m}
 \sum_{\deg h\le L}|h|^{-1/2}|h^\ast|^{1/10+\varepsilon}|h|^\varepsilon
 \ll q^{(9/5+\varepsilon)m+(4/5+\varepsilon)L}=o(q^{2m}),
\]
after relabeling $\varepsilon$ and taking it sufficiently small, since
\[
 \frac95+\frac45\lambda<1.804199<2.
\]
The normalized main term is
\[
 c_q\sum_{h,D\in\Aq}
 \frac{b_m(h)\rho(hD)}{\sqrt{|hh^\ast|}|D|^2}.
\]
For each fixed $h$, all its prime factors eventually lie in the
first block and have degree at most $B$; their weights tend to one
and $\ell_0\to\infty$. Thus $b_m(h)$ tends to the multiplicative
coefficient whose prime series is $e^{-x-\frac{x^2}{2}}$.
For dominated convergence, use the coefficient
majorant $\prod_{P^v\Vert h}\alpha_v$ from
Lemma~\ref{lem:GZlength}. Its weighted sum is finite: the local
$h$-factor is
\[
 1+\sum_{v\ge1}\alpha_v|P|^{-2\lceil v/4\rceil}
 =1+O(|P|^{-2}),
\]
where $\sum_v\alpha_v=e^{3/2}$; the $D$-sum is $\zeta_q(2)$.
At each prime the diagonal
requires the total exponent from the mollifier and the $L$-series
to be divisible by four. Formula \eqref{eq:GZlocal} then gives
exactly the local factor in \eqref{eq:GZA}.
\end{proof}

\subsection{A lower bound for the non-vanishing proportion}

Let $N_4(g)=\#\{\chi\in\Cfour:\Lq(1/2,\chi)\ne0\}$.

\begin{theorem}\label{thm:GZpositive}
For the family \eqref{eq:family},
\begin{equation}\label{eq:GZpositive}
 \displaystyle
 \liminf_{\substack{g\to\infty\\3\mid g}}
 \frac{N_4(g)}{|\Cfour|}
 \ge\frac{A_{4,q,G}}{2 e^{e^{52}+3}}
 >\frac1{2 e^{e^{52}+3}}> e^{-e^{53}}.
\end{equation}
\end{theorem}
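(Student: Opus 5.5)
We combine the mollified first moment of Proposition~\ref{prop:GZfirst} with the partition $\Cfour=\mathcal T\sqcup\mathcal B_0\sqcup\cdots\sqcup\mathcal B_J$. The mollified values on the exceptional sets are discarded in absolute value. On the good set, the pointwise bound \eqref{eq:GZgood} converts a lower bound for the first moment into a lower bound for the number of nonvanishing central values.

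First, write
\[
 \sum_{\chi\in\mathcal T}\Lq(1/2,\chi)M_G(\chi)
 =\sumchi\Lq(1/2,\chi)M_G(\chi)
 -\sum_{n=0}^J\sum_{\chi\in\mathcal B_n}\Lq(1/2,\chi)M_G(\chi).
\]
By Proposition~\ref{prop:GZfirst}, the first term is $(c_qA_{4,q,G}+o(1))q^{2m}$. Lemma~\ref{lem:GZinitial} bounds the $\mathcal B_0$ contribution by $o(q^{2m})$. By \eqref{eq:GZbadlater}, the contribution of $\mathcal B_1,\ldots,\mathcal B_J$ is at most
\[
 \frac{e^{20}}{(1-e^{-1})^2}e^{-0.03/\theta_J}q^{2m}
 \le\frac{e^{20}}{(1-e^{-1})^2}e^{-0.03e^{51}}q^{2m}.
\]
This is a tiny absolute constant times $q^{2m}$.

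The one comparison that needs checking is that this last constant is less than $\tfrac12c_qA_{4,q,G}$. By \eqref{eq:cq-lower}, $c_q\ge(1-q^{-1})^3\ge(2/3)^3$, since $q\ge3$. By \eqref{eq:GZAbounds}, $A_{4,q,G}>1$. On the other side, $e^{20-0.03e^{51}}$ is astronomically small. Hence, for large $m$,
\[
 \Bigl|\sum_{\chi\in\mathcal T}\Lq(1/2,\chi)M_G(\chi)\Bigr|\ge\tfrac12 c_qA_{4,q,G}q^{2m}.
\]

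Next, by the triangle inequality and \eqref{eq:GZgood}, every $\chi\in\mathcal T$ with $\Lq(1/2,\chi)=0$ contributes nothing to this sum. Each remaining term has modulus at most $e^{e^{52}+3}$. Therefore
\[
 \#\{\chi\in\mathcal T:\Lq(1/2,\chi)\ne0\}\ge\frac{c_qA_{4,q,G}q^{2m}}{2e^{e^{52}+3}}.
\]
Dividing by $|\Cfour|=c_qq^{2m}(1+o(1))$ from Lemma~\ref{lem:count} and taking $\liminf$ as $m\to\infty$ gives the first inequality in \eqref{eq:GZpositive}. Here $m\to\infty$ is equivalent to $g\to\infty$ with $3\mid g$, since $g=3m-3$.

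The remaining inequalities are elementary. The bound $A_{4,q,G}>1$ is \eqref{eq:GZAbounds}. For the last one, note that $2e^{e^{52}+3}<e^{e^{53}}$ because $e^{53}-e^{52}\gg 3+\log2$.

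The main obstacle is the numerical check that the later exceptional blocks, with constant $e^{20}(1-e^{-1})^{-2}e^{-0.03/\theta_J}$, really cost less than half of $c_qA_{4,q,G}$. This requires a uniform lower bound for $c_q$ that does not depend on $q$, which \eqref{eq:cq-lower} supplies. All the analytic work is already contained in the cited propositions.
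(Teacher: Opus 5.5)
Your proposal is correct and follows the paper's proof essentially step for step. You subtract the absolute exceptional contributions from Proposition~\ref{prop:GZlater} and Lemma~\ref{lem:GZinitial} from \eqref{eq:GZfirst}, use $c_q\ge 8/27$ and $A_{4,q,G}>1$ to retain half the main term, and finish with \eqref{eq:GZgood} and the triangle inequality; the paper compares the exceptional constant with $c_qA_{4,q,G}/4$ rather than $c_qA_{4,q,G}/2$, which is immaterial because your inequality is strict and $q$ is fixed, so the $o(1)$ terms are absorbed for large $m$.
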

\begin{proof}
Proposition~\ref{prop:GZlater} and Lemma~\ref{lem:GZinitial} give
\[
 \sum_{\chi\notin\mathcal T}|\Lq(1/2,\chi)M_G(\chi)|
 \le\left(\frac{e^{20-0.03e^{51}}}{(1-e^{-1})^2}+o(1)\right)q^{2m}.
\]
For $q\ge3$, \eqref{eq:cq-lower} gives $c_q\ge8/27$.
Thus the constant on the right is less than $2/27\le c_qA_{4,q,G}/4$,
by \eqref{eq:GZAbounds}.
Subtracting this absolute contribution from \eqref{eq:GZfirst},
and taking $m$ sufficiently large, yields
\[
 \left|\sum_{\chi\in\mathcal T}\Lq(1/2,\chi)M_G(\chi)\right|
 \ge\tfrac12c_qA_{4,q,G}q^{2m}.
\]
Each nonzero summand on $\mathcal T$ has modulus at most
$e^{1/\theta_J+3}$ by \eqref{eq:GZgood}, and characters with
zero central value contribute zero. The triangle inequality gives
\[
 N_4(g)\ge\frac{c_qA_{4,q,G}}{2e^{1/\theta_J+3}}q^{2m}
 \ge\frac{c_qA_{4,q,G}}{2e^{e^{52}+3}}q^{2m}.
\]
Divide by $|\Cfour|\sim c_qq^{2m}$. Finally
$e^{52}+3+\log2<e^{53}$ proves the last inequality in
\eqref{eq:GZpositive}.
\end{proof}

\begin{remark}
The proof requires the uniform twist dependence in
Theorem~\ref{input:first} and the absolute exceptional-set estimates.
A sharp mollified second-moment bound is not needed.
\end{remark}

\subsection{Numerical forms}\label{subsec:positive}

Theorem~\ref{thm:GZpositive} gives, for each fixed $q\equiv3\pmod4$,
\[
 \liminf_{\substack{g\to\infty\\3\mid g}}
 \frac{N_4(g)}{|\Cfour|}
 \ge \frac{A_{4,q,G}}{2 e^{e^{52}+3}}
 > \frac1{2 e^{e^{52}+3}}.
\]
Here $A_{4,q,G}$ is the convergent Euler product \eqref{eq:GZA}, whose
first nonconstant local term is $\rho_P/(4|P|^2)$.
The family-counting constant $c_q$ cancels when the good-set first
moment is divided by the family size.

Moreover,
\[
 \log_{10}\bigl(2 e^{e^{52}+3}\bigr)
 =\frac{e^{52}+3+\log2}{\log10}<2\cdot10^{22}.
\]
Consequently the same lower limit is strictly greater than
\[
 \frac1{10^{\,2\cdot10^{22}}}.
\]
The bounds hold as $g\to\infty$ through multiples of three for the
family \eqref{eq:family}.

\section{Sextic case}\label{sec:sextic}

We treat the sextic family parameterized in \cite[Lemma~3.6]{Lin}.
A double Dirichlet series gives the required twisted first moment;
the truncation and exceptional-set estimates of Section~\ref{sec:GZ}
then give an explicit lower bound for the non-vanishing proportion.

\subsection{The family and its parameterization}

Throughout this section, $q\equiv5\pmod6$. We retain $Q=q^2$,
the notation $\Aq,\AQ$, and the norms of Section~\ref{sec:prelim};
thus $Q\equiv1\pmod{12}$.
Fix compatible identifications of the sixth roots of unity and define
\begin{equation}\label{sextic:family}
 \mathcal F_m=\{F\in\AQ:F\text{ square-free},\ \deg F=m,\ (F,F^\sigma)=1\},
 \qquad
 \chi_F=\left.\left(\frac{\cdot}{F}\right)_6\right|_{\F_q[T]}.
\end{equation}
Here $\sigma$ on polynomials denotes coefficient Frobenius, as before.
We write $\mathcal C_6(g)=\{\chi_F:F\in\mathcal F_m\}$ when
$g=5m-5$.

\begin{lemma}\label{sextic:parameterization}
The map $F\mapsto\chi_F$ is a bijection onto the primitive order-six
characters of conductor degree $2m$ satisfying
\begin{equation}\label{sextic:conductor}
 \cond(\chi^2)=\cond(\chi^3)=\cond(\chi).
\end{equation}
These characters are even, their conductor is $FF^\sigma$, and their
completed $L$-polynomial has degree $2m-2$.  The associated cyclic
sextic extension has genus $g=5m-5$.
\end{lemma}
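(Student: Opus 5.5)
The proof will follow Lemma~\ref{lem:family}, replacing four by six. The steps are: a local classification of order-six characters at each prime of $\F_q[T]$, then the Chinese remainder theorem to assemble the global bijection, then evenness, and finally Riemann--Hurwitz for the genus.

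\emph{Local classification.} A primitive character of $\F_q[T]$ with square-free conductor is a product of local characters $(\F_q[T]/P)^\times\to\mu_6$. Such a local character has exact order six only when $6\mid |P|-1$. Since $q\equiv-1\pmod6$, this holds if and only if $\deg P$ is even. In that case $P=\pi\pi^\sigma$ splits over $\F_{q^2}$, and $\F_q[T]/P\cong\F_{q^2}[T]/\pi$. The order-six characters of this cyclic group are $\left(\frac{\cdot}{\pi}\right)_6$ and its inverse, which is $\left(\frac{\cdot}{\pi^\sigma}\right)_6$ restricted to $\F_q[T]$. I will check this restriction identity by the same Frobenius computation as for the quartic symbol.

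\emph{Role of the conductor condition.} Condition \eqref{sextic:conductor} says every ramified prime has local order divisible by both $2$ and $3$, hence exactly $6$. So $\chi$ is determined by choosing one prime $\pi\mid P$ for each ramified $P$. Set $F=\prod\pi$. Then $F$ is square-free, $(F,F^\sigma)=1$, no prime of odd degree divides $F$, and $\cond(\chi_F)=FF^\sigma$ has degree $2\deg F=2m$. Conversely, every admissible $F$ gives such a $\chi_F$. Injectivity follows because the conductor determines the set of $P$, and the local component at each $P$ determines $\pi$. This is the argument of \cite[Lemma~3.6]{Lin}, which I will cite for the details.

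\emph{Evenness.} For $a\in\F_q^\times$ and $\pi$ of degree $d$, the symbol is $a^{(q^{2d}-1)/6}$. Since $q+1\equiv0\pmod 6$, we have $(q-1)\mid (q^{2d}-1)/6$, so the symbol equals $1$. Hence $\chi_F$ is even, and infinity is unramified in the associated extension.

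\emph{Degree and genus.} The degree of the completed $L$-polynomial is the conductor degree minus $2$ for even primitive characters, giving $2m-2$. For the genus, the cyclic sextic extension is totally (tamely) ramified at each finite ramified prime; tameness holds since $q$ is odd and prime to $3$. By Riemann--Hurwitz, $2g-2=6(-2)+5\cdot 2m$, so $g=5m-5$.

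The step most likely to need care is the claim that the local order-six characters at $P$ are exactly the two residue symbols, with the correct restriction and conjugation behaviour. The fallback is a direct computation: $\F_q[T]/P$ is identified with $\F_{q^2}[T]/\pi$, so $\left(\frac{f}{\pi}\right)_6\equiv f^{(|P|-1)/6}$, which is a generator of the character group.
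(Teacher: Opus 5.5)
Your proposal is correct and follows the paper's proof essentially step for step. Both arguments use the same ingredients: local order six is possible only at even-degree $P=\pi\pi^\sigma$ because $q\equiv-1\pmod 6$; condition \eqref{sextic:conductor} forces local order exactly six; the bijection comes from \cite[Lemma~3.6]{Lin}; evenness follows from $(q-1)\mid(Q^{\deg\pi}-1)/6$; and Riemann--Hurwitz gives $2g-2=-12+5(2m)$. One small slip in your fallback remark: $f\mapsto f^{(|P|-1)/6}$ generates the subgroup of characters of order dividing six of $(\F_q[T]/P)^\times$, not the whole character group.
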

\begin{proof}
A base-field prime supports a local character of exact order six
precisely when its degree is even.  It then splits as
$P=\pi\pi^\sigma$ over $\F_Q$, and the two choices $\pi,\pi^\sigma$
give its two local characters of exact order six.  Condition
\eqref{sextic:conductor} excludes local order two and local order three
at ramified primes.  Independent local choices give the parameterization,
including $(F,F^\sigma)=1$; see \cite[Lemma~3.6]{Lin}.
For $a\in\F_q^\times$, its sextic symbol is one since $q-1$ divides
$(Q^{\deg\pi}-1)/6$.  Thus the characters are even and infinity is
unramified.  Riemann--Hurwitz gives
$2g-2=-12+5(2m)$, whereas the degree of each completed $L$-polynomial
is its conductor degree minus two.
\end{proof}

The admissible conductor parameters are the same as in
Lemma~\ref{lem:count}.  Consequently
\begin{equation}\label{sextic:count}
 N_m:=|\mathcal F_m|=c_qq^{2m}+O_{q,\varepsilon}(q^{(1+\varepsilon)m}),
 \qquad c_q\ge(1-q^{-1})^3\ge64/125,
\end{equation}
where $c_q$ is \eqref{eq:cq}, evaluated at the present $q$.
The factors $\rho_P$ and $\rho(H)$ in \eqref{eq:rho} also remain
unchanged: imposing $(F,H)=1$ multiplies the leading counting
coefficient by $\rho(H)$.  Conjugate characters are counted separately.
Condition \eqref{sextic:conductor} is retained throughout;
the resulting proportion is relative to this family.

For a monic twist, write uniquely
\begin{equation}\label{sextic:hstar}
 h=C_1C_2^2C_3^3C_4^4C_5^5E^6,
 \qquad h^\ast=C_1^5C_2^4C_3^3C_4^2C_5,
\end{equation}
where $C_1,\ldots,C_5$ are square-free and pairwise coprime; no
coprimality with $E$ is imposed.  Then $hh^\ast$ is a sixth power,
$|h^\ast|\le|h|^5$, and
\[
 \chi_F(h)=\overline{\chi_F(h^\ast)}\qquad((F,h)=1).
\]
The coprimality restriction must be retained even when $h^\ast=1$.

\subsection{The double Dirichlet series and the first two regions}

Define
\begin{equation}\label{sextic:mds}
 A_6(s,w;h)=\sum_{d\ge1}\sum_{F\in\mathcal F_d}
       \frac{\chi_F(h)\Lq(w,\chi_F)}{|F|_2^s},
 \qquad B_{6,h}(s,w)=(1-Q^{1-s})A_6(s,w;h).
\end{equation}
Put $u=Q^{-s}=q^{-2s}$ and $v=q^{-w}$, writing
$A_6(u,v;h)$ for the same function in these variables.  Initially,
\begin{equation}\label{sextic:perron}
 \sum_{F\in\mathcal F_m}\chi_F(h)\Lq(1/2,\chi_F)
 =\frac1{2\pi i}\oint A_6(u,q^{-1/2};h)\frac{du}{u^{m+1}}.
\end{equation}

Let $K_6(u;k)$ be the character generating series with $F=1$ included.
For an even-degree prime $P=\pi\pi^\sigma$, put
$\alpha_P=\left(\frac{k}{\pi}\right)_6$, setting it to zero when $P\mid k$.  The
conjugate prime contributes $\overline{\alpha_P}$, so
\begin{equation}\label{sextic:character-product}
 K_6(u;k)=\prod_{\deg P\text{ even}}
 \bigl(1+(\alpha_P+\overline{\alpha_P})u^{\deg P/2}\bigr).
\end{equation}
By reciprocity,
\[
 K_6(Q^{-s};k)=L_Q(s,\chi^{(k)})\mathcal E_6(s;k),
 \qquad \chi^{(k)}(R)=\left(\frac{k}{R}\right)_6.
\]
The product $\mathcal E_6$ converges normally for $\Rea s>1/2$.
Indeed, removing the two local $L$-factors at an even prime cancels
the linear term and leaves $1+O(|P|^{-2\Rea s})$.  At an odd prime
not dividing $k$, the symbol on base-field arguments is trivial and
the correction is $1-|P|^{-2s}$; at primes dividing $k$ the factor is
one.  The character $\chi^{(k)}$ is principal exactly when $k$ is a
sixth power.  The fixed-field Lindel\"of bound, including finite
imprimitive factors, therefore gives
\begin{equation}\label{sextic:character-bound}
 (1-Q^{1-s})K_6(Q^{-s};k)\ll_{q,\eta}|k|^\eta
 \qquad(\Rea s>1/2).
\end{equation}
As throughout the quartic argument, bounds on an open region are
uniform on compact subsets of its real base and on all imaginary
translates.

Write $\sigma_0=\Rea s$ and $\tau=\Rea w$.  The character expansion
\begin{equation}\label{sextic:character-expansion}
 A_6(s,w;h)=\sum_{f\in\Aq}\frac{K_6(Q^{-s};hf)-1}{|f|^w}
\end{equation}
continues $B_{6,h}$ holomorphically to the tube over
\begin{equation}\label{sextic:S1}
 S_1=\{\sigma_0>1/2,\ \tau>1\},
 \qquad B_{6,h}(s,w)\ll_{q,\eta}|h|^\eta.
\end{equation}
The subtraction removes $F=1$ and is retained in every representation.
On the other hand, the functional equation \eqref{eq:FE} and Weil's
theorem give
\[
 |\Lq(w,\chi_F)|\ll_{q,\eta}
 |F|_2^{\max(0,1/2-\tau)+\eta}.
\]
Summing first over conductor degree proves normal convergence of the
original series in
\begin{equation}\label{sextic:S2}
 S_2=\{\sigma_0>1+\max(0,1/2-\tau)\}
     =\{\sigma_0>1,\ \sigma_0+\tau>3/2\}.
\end{equation}
The resulting bounds are independent of $h$.  The two representations
agree on their common domain of absolute convergence, hence throughout
their overlap.

\subsection{The sextic Gauss series bound and the non-Kummer restriction}

Use the standard compatible additive characters, with
$e_Q=e_q\circ\operatorname{Tr}_{\F_Q/\F_q}$, and write
\[
 G_6(r,F)=\sum_{a\bmod F}\left(\frac{a}{F}\right)_6e_Q(ar/F).
\]
For $F\in\mathcal F_d$, the root-number calculation of
Section~\ref{sec:first}, with sixth powers in place of fourth powers,
gives
\begin{equation}\label{sextic:root}
 \omega(\chi_F)=\xi_q^d q^{-d}G_6(1,F),
 \qquad \xi_q=(-1)^{(q+1)/6}.
\end{equation}
The Chinese remainder theorem identifies the base-field Gauss sum with
$\left(\frac{F^\sigma}{F}\right)_6G_6(1,F)$.  If $R=\operatorname{Res}(F,F^\sigma)$,
then $R^q=(-1)^dR$, and hence
$\left(\frac{F^\sigma}{F}\right)_6=R^{(Q-1)/6}=(-1)^{d(q+1)/6}$, with the roots of unity
identified as fixed above.  The remaining phase depends only on $d$ and is therefore absorbed
into the generating variable.

For $0\le i\le5$, let
\begin{equation}\label{sextic:gauss-definitions}
 \psi^{(i)}(r,z)=\frac1{1-Q^6z^6}
   \sum_{\deg K\equiv i\ (6)}G_6(r,K)z^{\deg K},
 \qquad
 \widetilde\Psi_6(r,z;h)=
   \sum_{\substack{K\in\AQ\\(K,rh)=1}}G_6(r,K)z^{\deg K}.
\end{equation}
Theorem~4.1 of \cite{DFL25}, over $\F_Q$, gives
\[
 \psi^{(i)}(r,z)\ll_{q,\eta}|r|_2^{3/4-\beta/2+\eta},
 \qquad |z|=Q^{-\beta},\quad 4/3<\beta<3/2.
\]
The possible global poles $z^6=Q^{-7}$ have $\beta=7/6$ and do not
meet this range.  We require the following restricted form.

\begin{lemma}\label{sextic:restricted-gauss}
The function $\widetilde\Psi_6(r,z;h)$ is holomorphic for
$|z|<Q^{-7/6}$.  Uniformly on closed substrips of $4/3<\beta<3/2$,
\begin{equation}\label{sextic:restricted-bound}
 \widetilde\Psi_6(r,z;h)\ll_{q,\eta}
 |rh|_2^\eta |r|_2^{3/4-\beta/2}
 \qquad(|z|=Q^{-\beta}).
\end{equation}
For $\beta>3/2$, the defining series converges absolutely and is
bounded independently of $r,h$ on closed substrips.
\end{lemma}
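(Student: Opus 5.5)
The plan is to mirror the proof of Lemma~\ref{lem:restricted-gauss}, replacing quartic inputs by their sextic analogues from \cite[Sections~4--5]{DFL25} over $\F_Q$.

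\emph{Absolute convergence for $\beta>3/2$.} For $(K,rh)=1$, the sum $G_6(r,K)$ vanishes unless $K$ is square-free; for square-free $K$ it has modulus $|K|_2^{1/2}$, by multiplicativity and the vanishing of local Gauss sums with unit argument at prime-power moduli of exponent at least two. Hence
\[
 |\widetilde\Psi_6(r,z;h)|\le\zeta_Q(Q^{1/2}|z|)=\zeta_Q(\beta-1/2),
\]
which is bounded on closed substrips of $\beta>3/2$, independently of $r$ and $h$.

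\emph{Reduction to the metaplectic series.} Let $a$ be the product of the extension-field primes dividing $h$ but not $r$. Finite inclusion--exclusion on the condition $(K,a)=1$ gives the exact identity
\[
 \widetilde\Psi_6(r,z;h)=(1-Q^6z^6)\sum_{E\mid a}\mu_Q(E)\sum_{i=0}^5\psi_r^{(i)}(r,E,z),
\]
where $\psi_r^{(i)}(r,E,z)$ is the series over $K$ with $(K,r)=1$, $E\mid K$ and $\deg K\equiv i\pmod 6$, normalised by $(1-Q^6z^6)^{-1}$ as in \eqref{sextic:gauss-definitions}. I would then invoke \cite[Theorem~5.3 and Corollary~5.4]{DFL25} with $\ell=6$ over $\F_Q$. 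Here $E$ is square-free and coprime to $r$, and $r$ is written as $r_1r_2^2\cdots r_5^5r_6^6$. The reciprocity sign is one since $Q\equiv1\pmod{12}$. The admissibility condition of their bound in the range $4/3<\beta<3/2$ needs to be checked; the analogue of $\ell\le 6/(3-2\alpha)$ becomes $6\le6/(3-2\beta)$, which holds since $\beta\ge1$. This should give
\[
 \psi_r^{(i)}(r,E,z)\ll|Er|_2^\eta\,|E|_2^{6(3/4-\beta/2)-1}\,|r|_2^{3/4-\beta/2}.
\]
The $|E|_2$-exponent is $7/2-3\beta<-1/2$, and it is uniformly negative on closed substrips. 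So each divisor factor $1+|\pi|_2^{7/2-3\beta+\eta}$ is at most $2$, and $2^{\omega(a)}\ll|a|_2^\eta\le|h|_2^\eta$, exactly as in the quartic proof. This yields \eqref{sextic:restricted-bound}.

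\emph{Holomorphy.} By \cite[equation~(4.2)]{DFL25} and Theorem~5.3 there, the unrestricted pieces have the form $z^iP_{r,i}(z^6)/(1-Q^7z^6)$. The restricted pieces add only finite local denominators of the form $1-Q^{c\deg\pi}z^{6\deg\pi}$, whose zeros lie at $|z|=Q^{-c/6}$ with $c\le 5$. All of these are outside $|z|<Q^{-7/6}$, and the global poles lie exactly on $|z|=Q^{-7/6}$. Hence each term, and the finite combination above, is holomorphic in $|z|<Q^{-7/6}$, with uniform bounds on compact subannuli.

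\emph{Main obstacle.} The main work is transcribing the exact exponents and local denominators of the sextic case of \cite[Theorem~5.3]{DFL25}. I expect the precise local denominator exponent $c$ and the case selection in their bound to need checking. If the $E$-exponent were not negative near $\beta=4/3$, I would restrict to a slightly smaller substrip.
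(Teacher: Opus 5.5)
Your proposal is correct and is essentially the paper's proof: the same inclusion--exclusion over $E\mid a$, then the second case of \cite[Theorem~5.3 and Corollary~5.4]{DFL25} with $E$-exponent $7/2-3\beta<-1/2$. Holomorphy is read off from the denominators $1-Q^7z^6$ and the local factors, and the majorant $\zeta_Q(\beta-1/2)$ gives the range $\beta>3/2$; the local exponent you left as $c\le5$ is exactly $c=5$ in the paper, so those zeros lie on $|z|=Q^{-5/6}$.
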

\begin{proof}
Let $a$ be the product of the extension-field primes dividing $h$
but not $r$.  In the notation of \cite[Section~5]{DFL25}, finite
inclusion--exclusion gives
\begin{equation}\label{sextic:restricted-inclusion}
 \widetilde\Psi_6(r,z;h)
 =(1-Q^6z^6)\sum_{E\mid a}\mu_Q(E)
                 \sum_{i=0}^5\psi_r^{(i)}(r,E,z),
\end{equation}
where $\psi_r^{(i)}(r,E,z)$ imposes $(K,r)=1$ and $E\mid K$.
Write $r=r_1r_2^2\cdots r_5^5r_6^6$ as required in
\cite[Theorem~5.3 and Corollary~5.4]{DFL25}.  Their second case applies:
$6\le6/(3-2\beta)$ throughout the stated range.  The bound is
\[
 \psi_r^{(i)}(r,E,z)
 \ll_{q,\eta}|Er|_2^\eta
 |E|_2^{7/2-3\beta}|r|_2^{3/4-\beta/2}.
\]
Since $7/2-3\beta<-1/2$, summing over $E\mid a$ costs only
$|h|_2^\eta$, after relabeling small exponents.  This proves
\eqref{sextic:restricted-bound}, retaining primes in the sixth-power
part of the original twist.  The finite decomposition in the cited
theorem has possible denominators $1-Q^7z^6$ and
$1-Q^{5\deg\pi}z^{6\deg\pi}$; the reciprocity signs are one since
$Q\equiv1\pmod{12}$.  Their zeros have radii $Q^{-7/6}$ and
$Q^{-5/6}$, respectively, proving the stated holomorphy.  Finally, on
the coprime support the Gauss sum vanishes for nonsquare-free $K$ and
otherwise has size $|K|_2^{1/2}$.  Thus absolute convergence follows
from the majorant $\zeta_Q(\beta-1/2)$ for $\beta>3/2$.
\end{proof}

For the dual representation put $r=fh^\ast$ and $\beta=\sigma_0+\tau$.
Initially $\tau<0$ and $\beta>3/2$.  The functional equation and
\eqref{sextic:root}, followed by the exact sieve
\[
 \mathbf1_{(F,F^\sigma)=1}
 =\sum_{\substack{D\in\Aq\\D\mid F}}\mu_q(D)
 \qquad(F\text{ square-free}),
\]
give
\begin{align}\label{sextic:dual}
 A_6(s,w;h)
 ={}&q^{2w-1}\frac{1-q^{-w}}{1-q^{w-1}}\Bigg\{
 \sum_{f\in\Aq}|f|^{w-1}
 \sum_{\substack{D\in\Aq\\(D,fh)=1}}
 \frac{\mu_q(D)\xi_q^{\deg D}G_6(r,D)}{|D|_2^{s+w}}\notag\\
 &\hspace{21mm}\times\widetilde\Psi_6(rD^4,\xi_qQ^{-(s+w)};h)
 -\zeta_q(1-w)\Bigg\}.
\end{align}
Indeed, Gauss multiplicativity and sextic reciprocity give
$G_6(r,DK)=G_6(r,D)G_6(rD^4,K)$ when $(D,K)=1$.
The vanishing of nonsquare-free coprime Gauss sums removes that
restriction on $K$.  The term $\zeta_q(1-w)$ subtracts the empty
conductor $D=K=1$.  Thus the non-Kummer sieve produces the twist
$rD^4$, rather than the quartic twist $rD^2$.

\subsection{The dual domain and analytic continuation}

For $4/3<\beta<3/2$, Lemma~\ref{sextic:restricted-gauss} bounds the
inner series in \eqref{sextic:dual} by
\[
 |h|^\eta |fh^\ast D^4|_2^{3/4-\beta/2+\eta}
 \ll |h|^{O(\eta)}|fh^\ast|^{3/2-\beta+O(\eta)}
                    |D|^{6-4\beta+O(\eta)}.
\]
Together with $|G_6(r,D)|\le |D|$, the resulting majorant is
\begin{equation}\label{sextic:majorant}
 |h|^\eta|h^\ast|^{3/2-\beta+\eta}
 \sum_{f\in\Aq}|f|^{1/2-\sigma_0+\eta}
 \sum_{D\in\Aq}|D|^{7-6\beta+\eta}+O(1),
\end{equation}
after relabeling $\eta$.  The degree-$d$ contribution of the last sum is
$q^{(8-6\beta+\eta)d}$.  Normal convergence therefore requires
$\sigma_0>3/2$ and $\beta>4/3$.  For $\beta>3/2$, absolute convergence of the Gauss series gives a
bound independent of $r,h$.  A neighborhood of $\beta=3/2$ is obtained
by applying the maximum-modulus principle on a slightly larger circle
$|z|=Q^{-3/2+\nu}$ in Lemma~\ref{sextic:restricted-gauss}, with
$\nu>0$ chosen within the strict margins $\sigma_0>3/2$ and $\tau<0$.
Thus $B_{6,h}$ is holomorphic on the tube over
\begin{equation}\label{sextic:S3}
 S_3=\{\sigma_0>3/2,\ \sigma_0+\tau>4/3,\ \tau<0\}.
\end{equation}
For $4/3<\beta<3/2$ its bound is
\begin{equation}\label{sextic:S3bound}
 B_{6,h}(s,w)\ll_{q,\eta}
 |h|^\eta|h^\ast|^{3/2-\sigma_0-\tau+\eta}.
\end{equation}

The character and family expansions agree in their common region of
absolute convergence. The dual expansion agrees with them where
$\tau<0$ and $\sigma_0+\tau>3/2$. The identity theorem on the
connected overlaps gives a single holomorphic function on
$T(S_1\cup S_2\cup S_3)$.
Bochner's tube theorem, as used in Section~\ref{sec:first}, extends it
to the tube over
\begin{equation}\label{sextic:hull}
 \operatorname{conv}(S_1\cup S_2\cup S_3)
 =\left\{\sigma_0>\frac12,\quad
 \sigma_0+\tau>\frac43,\quad
 7\sigma_0+6\tau>\frac{19}{2}\right\}.
\end{equation}
In $(u,v)$ coordinates,
\eqref{sextic:hull} becomes
\begin{equation}\label{sextic:hull-uv}
 |u|<q^{-1},\qquad |u|^{1/2}|v|<q^{-4/3},\qquad
 |u|^7|v|^{12}<q^{-19}.
\end{equation}
At $w=1/2$, its slice is $\sigma_0>13/14$.
The global Gauss poles have $\beta=7/6$, and the local denominators
have $\beta=5/6$; both lie outside the region used.  Hence the only
possible poles of $A_6$ inside the hull come from $(1-Q^{1-s})^{-1}$.

For uniformity in $h$, interpolate the bounds on $S_1$ and $S_3$
as in Section~\ref{sec:first}. At real points
$(1/2+\delta,1+\delta)$ and $(3/2+\delta,-1/6+\delta)$, they are
$O(|h|^\eta)$ and
$O(|h|^\eta|h^\ast|^{1/6-2\delta+\eta})$, respectively.
The three-lines theorem gives a twist exponent at most $1/14+\eta$
on the central line $\Rea s=13/14+13\delta/7$.
The periods $\pi/\log q$ and $2\pi/\log q$ ensure boundedness
on the interpolation strips. Taking $\delta=7\varepsilon/26$,
choosing $\eta$ sufficiently small, and using $|h^\ast|\le|h|^5$
yields
\begin{equation}\label{sextic:contour-bound}
 A_6(s,1/2;h)\ll_{q,\varepsilon}
 |h^\ast|^{1/14}|h|^\varepsilon,
 \qquad \Rea s=\frac{13}{14}+\frac\varepsilon2.
\end{equation}
This estimate is uniform in $h$ and $\Im s$.

\subsection{The counting pole and the twisted first moment}

In \eqref{sextic:character-expansion}, the counting pole at $s=1$
occurs precisely when $hf$ is a sixth power.  Writing $f=h^\ast A^6$
and using \eqref{eq:rho}, we obtain initially for $\tau>1$
\begin{equation}\label{sextic:residue}
 B_{6,h}(1,w)=c_q|h^\ast|^{-w}
                \sum_{A\in\Aq}\rho(hA)|A|^{-6w}.
\end{equation}
The right side is holomorphic for $\tau>1/6$, while the $s=1$ slice
of \eqref{sextic:hull} contains $\tau>5/12$.  The identity theorem
thus continues this equality to $w=1/2$.  The density is $\rho(hA)$,
not $\rho(h^\ast A)$: primes in the sixth-power part of $h$ still
exclude conductors.

\begin{theorem}[Sextic twisted first moment]\label{sextic:tfm}
For fixed $q\equiv5\pmod6$ and every $\varepsilon>0$, uniformly in
monic $h$ with $\deg h\le m/10$, one has
\begin{align}\label{sextic:tfm-formula}
 \sum_{F\in\mathcal F_m}\chi_F(h)\Lq(1/2,\chi_F)
 ={}&q^{2m}\mathcal M_{6,q}(h)\notag\\
 &+O_{q,\varepsilon}\!\left(q^{(13/7+\varepsilon)m}
                     |h^\ast|^{1/14}|h|^\varepsilon\right),
\end{align}
where
\begin{align}\label{sextic:main}
 \mathcal M_{6,q}(h)
 ={}&\frac{c_q}{\sqrt{|h^\ast|}}
                  \sum_{A\in\Aq}\frac{\rho(hA)}{|A|^3}\notag\\
 ={}&\frac{c_q\zeta_q(3)}{\sqrt{|h^\ast|}}
  \prod_P\bigl(1-(1-\rho_P)|P|^{-3}\bigr)
  \prod_{P\mid h}\frac{\rho_P}{1-(1-\rho_P)|P|^{-3}}.
\end{align}
\end{theorem}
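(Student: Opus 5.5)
The plan is to repeat the proof of Theorem~\ref{input:first}, substituting the sextic inputs established above. The first step is the main term. For $\Rea w>1$ and $\Rea s>1/2$, evaluate $B_{6,h}(1,w)$ using \eqref{sextic:residue}. The $s=1$ slice of \eqref{sextic:hull} contains $\tau>5/12$, and the right side of \eqref{sextic:residue} is holomorphic for $\tau>1/6$. The one-variable identity theorem therefore gives
\[
B_{6,h}(1,1/2)=c_q|h^\ast|^{-1/2}\sum_{A}\rho(hA)|A|^{-3}=\mathcal M_{6,q}(h),
\]
which is positive because $\rho>0$. The Euler product in \eqref{sextic:main} follows from the same local computation as in Proposition~\ref{prop:first-factor}, with $|P|^{-2}$ replaced by $|P|^{-3}$. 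At $P\nmid h$ the local $A$-sum is $1+\rho_P|P|^{-3}/(1-|P|^{-3})$. At $P\mid h$ it is $\rho_P/(1-|P|^{-3})$.

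Second, the analytic structure. At $w=1/2$ the slice of the hull is $\sigma_0>13/14$, that is, $|u|<q^{-13/7}$. There the only possible poles of $A_6(u,q^{-1/2};h)$ come from $(1-Q^{1-s})^{-1}$, and in the $u$-variable these all collapse to the single point $u=q^{-2}$. Since $B_{6,h}(1,1/2)=\mathcal M_{6,q}(h)\ne0$, the pole is simple. Its residue is computed as in the quartic case:
\[
\Res_{u=q^{-2}}A_6(u,q^{-1/2};h)u^{-m-1}=-q^{2m}\mathcal M_{6,q}(h).
\]
This uses $1-Q^{1-s}=1-q^2u$.

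Third, shift the contour. Fix $0<\varepsilon<1/7$ and take $R=q^{-13/7-\varepsilon}$, so that $q^{-2}<R<q^{-13/7}$. Moving the contour in \eqref{sextic:perron} from a small circle out to $|u|=R$ picks up the main term $q^{2m}\mathcal M_{6,q}(h)$. The remaining integral is at most $R^{-m}\sup_{|u|=R}|A_6|$. By \eqref{sextic:contour-bound}, applied after relabeling $\varepsilon$ so that the circle $|u|=R$ corresponds to $\Rea s=13/14+\varepsilon/2$, this is
\[
\ll q^{(13/7+\varepsilon)m}|h^\ast|^{1/14}|h|^\varepsilon.
\]
Because \eqref{sextic:contour-bound} is uniform in $h$, the restriction $\deg h\le m/10$ is only needed to keep the stated formula meaningful. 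That restriction is harmless and keeps the error smaller than the main term, since $|h^\ast|^{1/14}\le|h|^{5/14}$.

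The main obstacle is making sure that \eqref{sextic:contour-bound} really holds on the circle used, uniformly in $\Im s$. This depends on the three-lines interpolation between $S_1$ and $S_3$, which needs boundedness of $B_{6,h}$ on the interpolation strips. For each fixed $h$ that boundedness comes from the periodicity in $s$ and $w$ together with the holomorphy on the tube. I would check that the segment from $(1/2+\delta,1+\delta)$ to $(3/2+\delta,-1/6+\delta)$ passes through the point $(13/14+13\delta/7,1/2)$ and lies inside the hull. I would also check that the factor $1-Q^{1-s}$ stays bounded away from zero on $\Rea s=13/14+\varepsilon/2$, which holds because $\Rea s<1$ there. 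Once these are verified, the proof goes through exactly as in Theorem~\ref{input:first}.
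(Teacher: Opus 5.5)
Your proposal is correct and follows the paper's proof essentially step for step. You identify $B_{6,h}(1,1/2)=\mathcal M_{6,q}(h)$ by continuing \eqref{sextic:residue} along the $s=1$ slice, use periodicity and \eqref{sextic:hull} to isolate a single simple pole at $u=q^{-2}$ in $|u|<q^{-13/7}$, and shift the Perron contour to $|u|=q^{-13/7-\varepsilon}$, bounding the remaining integral by \eqref{sextic:contour-bound}.

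One aside is inaccurate, although the proof does not use it. For $\deg h\le m/10$ your error bound is $o(q^{2m})$, but it need not be smaller than the main term. Since $\mathcal M_{6,q}(h)\ll_q|h^\ast|^{-1/2}$, a square-free $h$ of degree $m/10$ gives a main term $O(q^{7m/4})$ against an error bound of size $q^{(53/28+\varepsilon)m}$. This is why the paper claims only an absolute-error formula (Remark~\ref{sextic:twist-length}).
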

\begin{proof}
Periodicity and \eqref{sextic:hull} continue
$A_6(u,q^{-1/2};h)$ meromorphically to $|u|<q^{-13/7}$, with the
only pole at $u=Q^{-1}$.  Its principal part is
$\mathcal M_{6,q}(h)/(1-Qu)$ by \eqref{sextic:residue}.
For $0<\varepsilon<1/7$, move the contour in
\eqref{sextic:perron} to $|u|=q^{-13/7-\varepsilon}$.  The counting
pole contributes $q^{2m}\mathcal M_{6,q}(h)$; explicitly, the residue
of the Perron integrand there is $-q^{2m}\mathcal M_{6,q}(h)$.
The remaining integral is bounded using \eqref{sextic:contour-bound},
which proves the error term.  The Euler product follows from the
geometric series at each prime.
\end{proof}

\begin{corollary}\label{sextic:first-moment}
Fix an odd prime power $q\equiv5\pmod6$. For every $\varepsilon>0$,
as $m\to\infty$,
\begin{equation}\label{sextic:first-moment-formula}
 \sum_{F\in\mathcal F_m}\Lq(1/2,\chi_F)
 =q^{2m}\mathcal M_{6,q}(1)
  +O_{q,\varepsilon}\!\left(q^{(13/7+\varepsilon)m}\right),
\end{equation}
where
\begin{equation}\label{sextic:first-moment-constant}
 \mathcal M_{6,q}(1)
 =c_q\zeta_q(3)
   \prod_P\left(1-(1-\rho_P)|P|^{-3}\right)>0.
\end{equation}
\end{corollary}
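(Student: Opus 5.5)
This is the case $h=1$ of Theorem~\ref{sextic:tfm}, so the plan is to specialize that theorem rather than redo any analysis. For $h=1$ the decomposition \eqref{sextic:hstar} has all $C_i=E=1$, so $h^\ast=1$ and $|h^\ast|=1$. Also $\deg h=0\le m/10$ for every $m\ge1$, so the uniform formula \eqref{sextic:tfm-formula} applies with $\chi_F(1)=1$. The error term becomes $O_{q,\varepsilon}(q^{(13/7+\varepsilon)m}\cdot1\cdot1)$, and the main term is $q^{2m}\mathcal M_{6,q}(1)$. This gives \eqref{sextic:first-moment-formula} directly.

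For the constant, substitute $h=1$ into the second line of \eqref{sextic:main}. The product over $P\mid h$ is empty, hence equal to one, and this leaves $c_q\zeta_q(3)\prod_P(1-(1-\rho_P)|P|^{-3})$. That is exactly \eqref{sextic:first-moment-constant}.

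To confirm this Euler product independently, expand $\sum_A\rho(A)|A|^{-3}$ prime by prime. The local factor is $1+\rho_P|P|^{-3}/(1-|P|^{-3})$, which equals $(1-(1-\rho_P)|P|^{-3})/(1-|P|^{-3})$. The product of the denominators is $\zeta_q(3)$. Absolute convergence holds because $0<\rho_P\le1$.

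The only remaining point is positivity. First, $c_q>0$ by Lemma~\ref{lem:count}, and more precisely $c_q\ge(1-q^{-1})^3$ by \eqref{sextic:count}. Second, $\zeta_q(3)=(1-q^{-2})^{-1}>0$. Third, each factor $1-(1-\rho_P)|P|^{-3}$ lies in $[1-|P|^{-3},1]$, since $0\le1-\rho_P<1$. So the product is bounded below by $\prod_P(1-|P|^{-3})=\zeta_q(3)^{-1}>0$.

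Combining these, $\mathcal M_{6,q}(1)\ge c_q>0$. Nothing here should present a genuine obstacle; the only care needed is to check that the hypothesis $\deg h\le m/10$ and the implied constants in Theorem~\ref{sextic:tfm} impose no restriction when $h=1$, and they do not.
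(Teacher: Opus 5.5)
Your proposal is correct and matches the paper's proof: set $h=1$ in Theorem~\ref{sextic:tfm}, so that $h^\ast=1$ and the error term reduces to $O_{q,\varepsilon}(q^{(13/7+\varepsilon)m})$, then read off the constant from the Euler product in \eqref{sextic:main}, where the product over $P\mid h$ is empty. Your positivity check, which gives $\mathcal M_{6,q}(1)\ge c_q>0$ from $1-|P|^{-3}\le 1-(1-\rho_P)|P|^{-3}\le 1$, is a more explicit form of the paper's remark that the absolutely convergent product has positive local factors.
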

\begin{proof}
Set $h=1$ in Theorem~\ref{sextic:tfm}. Then $h^\ast=1$, and
\eqref{sextic:main} gives the stated constant. Its Euler product
converges absolutely and has positive local factors.
\end{proof}

\begin{remark}\label{sextic:twist-length}
Theorem~\ref{sextic:tfm} gives a uniform absolute-error formula; a
relative asymptotic is not asserted for every growing twist in its range.  If $|b(h)|\le1$ and
$\deg h\le L\le m/10$, its summed error is at most
\begin{equation}\label{sextic:summed-error}
 q^{(13/7+\varepsilon)m}
 \sum_{\deg h\le L}|h|^{-1/2}|h^\ast|^{1/14}|h|^\varepsilon
 \ll q^{(13/7+\varepsilon)m+(6/7+\varepsilon)L}.
\end{equation}
The length in \eqref{eq:GZlength} therefore leaves a fixed power
saving relative to $q^{2m}$.
\end{remark}

\subsection{The squared exponential mollifier and its first moment}

Use the parameters \eqref{eq:GZparameters} for the
sextic family, so $e^{-52}<\theta_J\le e^{-51}$ and
$B=\lceil4\log_q(2m)\rceil$.  Put
\begin{align}\label{sextic:blocks}
 Q_B(\chi)&=\frac12\sum_{\deg P\le B}\frac{\chi(P)^2}{|P|},\notag\\
 \mathcal P^{(6)}_{r,N_u}(\chi)
 &=\sum_{\deg P\in I_r}\frac{a_{N_u}(P)\chi(P)}{\sqrt{|P|}}
                       +1_{\{r=0\}}Q_B(\chi),\notag\\
 M_{6,G}(\chi)&=\prod_{r=0}^J
     E_{\ell_r}\!\left(-\frac{\mathcal P^{(6)}_{r,N_J}(\chi)}2\right)^2.
\end{align}
The prime-square term is common to all scales, as in
Section~\ref{sec:GZ}.
Lemma~\ref{lem:GZlength} is a coefficient calculation independent of
the character order; it yields
\begin{equation}\label{sextic:length}
 M_{6,G}(\chi)=\sum_{\deg h\le L}\frac{b_m(h)\chi(h)}{\sqrt{|h|}},
 \qquad |b_m(h)|\le1,\qquad L/m<0.005248338.
\end{equation}

Retain the nonnegative coefficients $d_v$ from \eqref{eq:GZlocal},
namely
\begin{equation}\label{sextic:local-series}
 \frac{e^{-x-\frac{x^2}{2}}}{1-x}
 = e^{\sum_{v\ge3}\frac{x^v}{v}}
 =\sum_{v\ge0}d_vx^v,
 \qquad d_6=\frac16+\frac1{18}=\frac29.
\end{equation}
The sextic arithmetic factor is
\begin{equation}\label{sextic:arithmetic}
 A_{6,q,G}=\prod_P\left(1+\rho_P\sum_{k\ge1}d_{6k}|P|^{-3k}\right).
\end{equation}
Since $0\le d_v\le1$, it converges absolutely and satisfies
\begin{equation}\label{sextic:arithmetic-bounds}
 1<A_{6,q,G}\le\zeta_q(3),\qquad
 1+\rho_P\sum_{k\ge1}d_{6k}|P|^{-3k}
 =1+\frac{2\rho_P}{9|P|^3}+O(|P|^{-6}).
\end{equation}

\begin{proposition}\label{sextic:mollified-first}
For the mollifier \eqref{sextic:blocks},
\begin{equation}\label{sextic:first}
 \sum_{F\in\mathcal F_m}\Lq(1/2,\chi_F)M_{6,G}(\chi_F)
 =(c_qA_{6,q,G}+o(1))q^{2m}.
\end{equation}
\end{proposition}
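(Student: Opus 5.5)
We follow the proof of Proposition~\ref{prop:GZfirst}, with Theorem~\ref{sextic:tfm} in place of Theorem~\ref{input:first}. Expand the mollifier as in \eqref{sextic:length} and insert each coefficient into the twisted first moment. Since $L<0.005248338\,m<m/10$, every twist $h$ with $b_m(h)\ne0$ lies in the uniform range of Theorem~\ref{sextic:tfm}. By Remark~\ref{sextic:twist-length}, the total error is
\[
 \ll_{q,\varepsilon} q^{(13/7+\varepsilon)m+(6/7+\varepsilon)L},
\]
where we used $|h^\ast|\le|h|^5$. This is $o(q^{2m})$ because
\[
 \tfrac{13}{7}+\tfrac67\cdot0.005248338<1.8617<2
\]
once $\varepsilon$ is small. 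Dividing by $q^{2m}$, the main term is
\[
 c_q\sum_{h}\sum_{A\in\Aq}\frac{b_m(h)\rho(hA)}{\sqrt{|hh^\ast|}\,|A|^3}.
\]

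We pass to the limit $m\to\infty$ inside this double sum by dominated convergence. For each fixed $h$, every prime factor of $h$ eventually has degree at most $B$ and lies in the first block. There the weights $a_{N_J}(P)\to1$ and $\ell_0\to\infty$, so the Taylor truncation is eventually inactive on the coefficient of $h$. Hence $b_m(h)$ converges to the multiplicative coefficient $b(h)$ generated locally by $e^{-x-x^2/2}$. Both the first-order term $-\chi(P)/\sqrt{|P|}$ and the prime-square term $-\chi(P)^2/(2|P|)$ contribute to the exponent of $P$, which is exactly why the square correction sits in $Q_B$.

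For the dominating function, take the majorant $|b_m(h)|\le\prod_{P^v\Vert h}\alpha_v$ from Lemma~\ref{lem:GZlength}. The weight $|hh^\ast|^{-1/2}$ equals $|P|^{-3\lceil v/6\rceil}$ at $P^v\Vert h$, so the local $h$-factor is
\[
 1+\sum_{v\ge1}\alpha_v|P|^{-3\lceil v/6\rceil}=1+O(|P|^{-3}),
\]
using $\sum_v\alpha_v=e^{3/2}$. The $A$-sum is bounded by $\zeta_q(3)$, so the product converges and dominated convergence applies.

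It remains to identify the limit with $c_qA_{6,q,G}$, which is the only genuinely sextic computation. Since $b$ is multiplicative and $\rho$ is multiplicative over the primes of $hA$, the limit factors as an Euler product. At a prime $P$ the diagonal combines $P^v\Vert h$ with the $L$-series exponent, namely $f=h^\ast A^6$. The local factor is a sum of $b(P^v)|P|^{-w/2}$ over total exponents $w=v+k\equiv0\pmod6$, where $k$ is the exponent of $P$ in $f$. The factor $\rho_P$ appears whenever $w>0$, and the weight $|P|^{-w/2}$ combines $|P|^{-v/2}$ from $|h|^{-1/2}$ with $|P|^{-k/2}$ from $|f|^{-1/2}$.

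Summing over $k$ with $x$ marking the exponent, $\sum_v b(P^v)x^v\cdot\sum_kx^k=e^{-x-x^2/2}/(1-x)=\sum_v d_vx^v$ by \eqref{sextic:local-series}. Selecting exponents divisible by six with $x=|P|^{-1/2}$ gives $1+\rho_P\sum_{k\ge1}d_{6k}|P|^{-3k}$, which is the local factor of \eqref{sextic:arithmetic}. The constant $c_q$ is already extracted in \eqref{sextic:main}, so the total is $c_qA_{6,q,G}$.

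The main obstacle is this bookkeeping of the sixth-power diagonal: one must check that $\rho(hA)$, rather than $\rho(h^\ast A)$, matches "$\rho_P$ for $w>0$", including primes of $h$ with $6\mid v$, which by \eqref{sextic:residue} still carry $\rho_P$. Everything else transfers verbatim from the quartic case.
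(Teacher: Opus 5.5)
Your proposal is correct and follows the paper's proof essentially step for step. Both arguments sum the error of Theorem~\ref{sextic:tfm} to get $q^{(13/7+\varepsilon)m+(6/7+\varepsilon)L}=o(q^{2m})$, then apply dominated convergence with the majorant $\prod_{P^v\Vert h}\alpha_v$ and local weight $|P|^{-3\lceil v/6\rceil}$, and finally select the exponents divisible by six in $e^{-x-x^2/2}/(1-x)$. Your explicit check that $\rho_P$ appears exactly when the total exponent $w$ is positive, so that $\rho(hA)$ rather than $\rho(h^\ast A)$ is correct, spells out local bookkeeping the paper leaves implicit.
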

\begin{proof}
Put $\lambda=0.005248338$. By \eqref{sextic:length}, every twist
has degree at most $L<\lambda m<m/10$, so
Theorem~\ref{sextic:tfm} applies. Insert \eqref{sextic:tfm-formula}
into \eqref{sextic:length}. By \eqref{sextic:summed-error}, the total
error is
\[
 O_{q,\varepsilon}\!\left(
 q^{(13/7+\varepsilon)m+(6/7+\varepsilon)L}\right)=o(q^{2m}),
 \qquad \frac{13}{7}+\frac67\lambda<1.861642<2,
\]
with $\varepsilon$ sufficiently small.
For fixed $h$, its primes eventually lie in the first block below $B$,
the weights tend to one, and the cutoffs tend to infinity.  The
coefficient limit therefore has local series $e^{-x-\frac{x^2}{2}}$.
The absolute coefficient majorant from Lemma~\ref{lem:GZlength} has
local coefficients $\alpha_v$ with $\sum_v\alpha_v=e^{3/2}$.  In the
main term its local $h$-sum is
\[
 1+\sum_{v\ge1}\alpha_v|P|^{-3\lceil v/6\rceil}
 =1+O(|P|^{-3}),
\]
and the remaining $A$-sum is bounded by $\zeta_q(3)$.  Dominated
convergence applies.  The total exponent of $fh$ must be divisible
by six; selecting those coefficients in \eqref{sextic:local-series}
gives precisely \eqref{sextic:arithmetic}.
\end{proof}

\subsection{The good set and the absolute exceptional first moment}

We record why the estimates from Section~\ref{sec:GZ} transfer to
order six, rather than assuming a sextic twisted second moment.
For arbitrary monic $R\in\AQ$, define the restricted sextic symbol
$\chi_R$.  For monic $U,V\in\Aq$ with $\deg\rad(UV)\le m$,
sextic reciprocity and complete residue-class averaging give
\begin{equation}\label{sextic:orthogonality}
 q^{-2m}\sum_{\substack{R\in\AQ\\\deg R=m}}
        \chi_R(U)\overline{\chi_R(V)}
 =\begin{cases}
 \displaystyle\frac{\phi_Q(\rad(UV))}{|\rad(UV)|_2},
                 &UV^5\text{ is a sixth power},\\[2mm]
 0,              &\text{otherwise}.
 \end{cases}
\end{equation}
The proof is that of Lemma~\ref{lem:orthog}, with $UV^5$ in place of
$UV^3$. The character in the variable $R$ is induced modulo
$\rad(UV)$, which is the modulus used in the complete average.
Thus the exact independent model now has
\[
 \Pr(X_P=0)=1-\gamma_P,\qquad
 \Pr(X_P=\xi)=\frac{\gamma_P}{6}\quad(\xi^6=1),
\]
with the same $\gamma_P$ as in \eqref{eq:gamma}.  The mean is zero and
$|X_P|\le1$, so all prime-sum bounds in
Lemma~\ref{lem:model-bounds} remain valid with the same constants.
Only finite polynomials satisfying the monomial degree condition are
transferred to this model.

The sextic square $\chi^2$ is a nonprincipal primitive cubic
character of conductor degree $2m$.  Its explicit formula therefore
gives
\[
 \left|\sum_{\deg P=d}\chi(P)^2\right|\ll_q\frac{m q^{d/2}}d.
\]
Consequently the prime-square shortening
\eqref{eq:GZsquare-shortening} holds without change.  The logarithmic
upper bound derived from \cite[Lemma~3.1]{DFL21} before separating
prime powers uses only Weil's theorem and conductor degree, and yields
\begin{equation}\label{sextic:log-bound}
 \log|\Lq(1/2,\chi)|
 \le\Rea\sum_{r=0}^j\mathcal P^{(6)}_{r,N_j}(\chi)
                          +\theta_j^{-1}+2.
\end{equation}
Higher prime powers are absolutely summable, and the error in
shortening the square sum is uniform and tends to zero.  Here the explicit formula is applied to the cubic character $\chi^2$.

Define $\mathcal T^{(6)}$ and $\mathcal B_n^{(6)}$ by the thresholds
$|\mathcal P^{(6)}_{r,N_u}|\le K_r$ exactly as in
Section~\ref{sec:GZ}.  The Taylor estimate in
Lemma~\ref{lem:GZTaylor} uses only the preceding prime-sum bounds and
$|Q_B(X)|\le(1+\log B)/2$, not the order of $X_P$.  Moreover the
common square term cancels between the two scales in its difference
$U-W$.  Equations \eqref{eq:GZvariance} and \eqref{eq:GZmatched}
therefore retain their constants. The hypotheses of the Taylor
estimates hold for the parameters \eqref{eq:GZparameters}, as
verified in Section~\ref{sec:GZ}. It follows that
\begin{equation}\label{sextic:good}
 |\Lq(1/2,\chi)M_{6,G}(\chi)|
 \le e^{\theta_J^{-1}+3}<e^{e^{52}+3}
 \qquad(\chi\in\mathcal T^{(6)}).
\end{equation}

The later exceptional blocks satisfy the same estimate.  With $s_n=\lfloor1/(8\theta_n)\rfloor$ and $n\ge1$,
use the shorter scale $N_{n-1}$ in \eqref{sextic:log-bound} and insert
\[
 1_{\mathcal B_n^{(6)}}
 \le\sum_{u=n}^J
       |\mathcal P^{(6)}_{n,N_u}/K_n|^{2s_n}
\]
on the set where earlier blocks are controlled.  The majorant is a
nonnegative finite polynomial whose monomial radical degrees are
bounded by
\[
 2L+2s_nN_n\le2L+\frac m2<0.510497m<m.
\]
Thus \eqref{sextic:orthogonality} applies after extending its sum to
all $R$. The matched earlier blocks cost
at most $e^3$.  The failing block costs at most
$4e^4(16s_n)^{s_n}/K_n^{2s_n}$, by H\"older and the same moment bound
as in Proposition~\ref{prop:GZlater}; each later block costs at most
$e^4$.  Thus
\begin{align}\label{sextic:bad-block}
 q^{-2m}\sum_{\chi\in\mathcal B_n^{(6)}}
    |\Lq(1/2,\chi)M_{6,G}(\chi)|
 \le{}&e^{20}(J-n+1)e^{1/\theta_{n-1}+4(J-n)}
               \left(\frac{16s_n}{K_n^2}\right)^{s_n},\\
 q^{-2m}\sum_{n=1}^J\sum_{\chi\in\mathcal B_n^{(6)}}
    |\Lq(1/2,\chi)M_{6,G}(\chi)|
 \le{}&\frac{e^{20}}{(1-e^{-1})^2}e^{-0.03/\theta_J}.
 \label{sextic:bad-later}
\end{align}
The summation uses $16s_n/K_n^2\le2\sqrt{\theta_n}$ and
$\theta_J\le e^{-51}$. In particular,
\[
 \frac{51}{18}-e-\frac{\log2}{9}>0.038035>0.03,
\]
so the summation argument in Proposition~\ref{prop:GZlater} applies
with the same constant $0.03$.

Finally the initial block satisfies
\begin{equation}\label{sextic:bad-initial}
 q^{-2m}|\mathcal B_0^{(6)}|
 \le e^{-c(\log m)^{3/2}\log\log m},
 \qquad
 \sum_{\chi\in\mathcal B_0^{(6)}}
       |\Lq(1/2,\chi)M_{6,G}(\chi)|=o(q^{2m}).
\end{equation}
To justify the second assertion, the proof of Lemma~\ref{lem:GZrough}
uses only \eqref{sextic:log-bound}, or its unshortened prime-power
form, and \eqref{sextic:orthogonality}.  It gives fixed polynomial
bounds for $q^{-2m}\sum|L|^4$ and $q^{-2m}\sum|M_{6,G}|^4$.
For the latter average, the required radical-degree bound follows
from $4L<0.020994m<m$, by \eqref{sextic:length}.
For the first assertion take $s_0=\lfloor K_0\rfloor$;
$2s_0N_0=o(m)$ permits exact averaging, while
$|Q_B|=o(K_0)$.  Markov's inequality gives the displayed decay.
H\"older with exponents $2,4,4$ then proves the absolute weighted
bound, as in Lemma~\ref{lem:GZinitial}.  These auxiliary rough
moments are proved from finite averages; no asymptotic second moment
is being assumed.

\subsection{A lower bound for the non-vanishing proportion}

Let
\[
 N_6(m)=\#\{F\in\mathcal F_m:\Lq(1/2,\chi_F)\ne0\}.
\]

\begin{theorem}\label{sextic:positive}
For every fixed $q\equiv5\pmod6$, the family
\eqref{sextic:family} satisfies
\begin{equation}\label{sextic:proportion}
 \liminf_{m\to\infty}\frac{N_6(m)}{N_m}
 \ge\frac{A_{6,q,G}}{2 e^{e^{52}+3}}
 >\frac1{2 e^{e^{52}+3}}
 > e^{-e^{53}}.
\end{equation}
Equivalently, this holds for $\mathcal C_6(g)$ as $g\to\infty$
through multiples of five.
\end{theorem}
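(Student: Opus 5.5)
The argument follows the proof of Theorem~\ref{thm:GZpositive} line by line, with the sextic inputs replacing the quartic ones. The plan has three steps: combine the exceptional-set bounds, subtract them from the mollified first moment, and use the pointwise bound on the good set.

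First I bound the absolute contribution outside the good set $\mathcal T^{(6)}$. The later blocks are controlled by \eqref{sextic:bad-later} and the initial block by \eqref{sextic:bad-initial}. Together they give
\[
 \sum_{F\in\mathcal F_m,\ \chi_F\notin\mathcal T^{(6)}}|\Lq(1/2,\chi_F)M_{6,G}(\chi_F)|
 \le\left(\frac{e^{20-0.03e^{51}}}{(1-e^{-1})^2}+o(1)\right)q^{2m},
\]
where I have used $\theta_J\le e^{-51}$. The constant here is astronomically small. By \eqref{sextic:count}, $c_q\ge64/125$, and by \eqref{sextic:arithmetic-bounds}, $A_{6,q,G}>1$. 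Hence the constant is below $c_qA_{6,q,G}/4$, say.

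Next I subtract this from the mollified first moment of Proposition~\ref{sextic:mollified-first}. For $m$ large, this gives
\[
 \Bigl|\sum_{\chi_F\in\mathcal T^{(6)}}\Lq(1/2,\chi_F)M_{6,G}(\chi_F)\Bigr|\ge\tfrac12c_qA_{6,q,G}q^{2m}.
\]
By \eqref{sextic:good}, each summand has modulus at most $e^{\theta_J^{-1}+3}\le e^{e^{52}+3}$, since $\theta_J>e^{-52}$. Summands with $\Lq(1/2,\chi_F)=0$ vanish. The triangle inequality therefore gives
\[
 N_6(m)\ge\frac{c_qA_{6,q,G}}{2e^{e^{52}+3}}q^{2m}.
\]
Dividing by $N_m=c_qq^{2m}(1+o(1))$, from \eqref{sextic:count}, gives the first inequality in \eqref{sextic:proportion}.

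The remaining inequalities are immediate. The middle one follows from $A_{6,q,G}>1$. The last one follows from $e^{52}+3+\log2<e^{53}$. The reformulation for $\mathcal C_6(g)$ follows from the bijection in Lemma~\ref{sextic:parameterization} together with $g=5m-5$.

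Essentially no new obstacle arises, because all the analytic work is already contained in the sextic analogues of the Section~\ref{sec:GZ} estimates recorded above. The one point to check is that these analogues hold for $q\equiv5\pmod6$ with the stated constants. In particular, the good-set bound \eqref{sextic:good} requires the Taylor hypotheses of Lemma~\ref{lem:GZTaylor} for the sextic model, and these were verified to be order-independent. Beyond that, only the numerical comparison of the exceptional constant with $c_qA_{6,q,G}/4$ needs to be confirmed.
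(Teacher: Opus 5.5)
Your proposal is correct and follows the paper's proof essentially step for step. You combine \eqref{sextic:bad-later} and \eqref{sextic:bad-initial}, and compare the exceptional constant with $c_qA_{6,q,G}/4$ via $c_q\ge64/125$ and $A_{6,q,G}>1$. You then subtract from Proposition~\ref{sextic:mollified-first}, apply the good-set bound \eqref{sextic:good} with the triangle inequality, and divide by \eqref{sextic:count}.
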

\begin{proof}
By \eqref{sextic:bad-later} and \eqref{sextic:bad-initial},
\[
 \sum_{\chi\notin\mathcal T^{(6)}}
        |\Lq(1/2,\chi)M_{6,G}(\chi)|
 \le\left(\frac{e^{20-0.03e^{51}}}{(1-e^{-1})^2}+o(1)\right)q^{2m}.
\]
The explicit constant is less than $c_qA_{6,q,G}/4$, since
$c_q\ge64/125$ and $A_{6,q,G}>1$.
Subtract this absolute contribution from \eqref{sextic:first}.
For sufficiently large $m$,
\[
 \left|\sum_{\chi\in\mathcal T^{(6)}}
        \Lq(1/2,\chi)M_{6,G}(\chi)\right|
 \ge\tfrac12c_qA_{6,q,G}q^{2m}.
\]
A zero central value contributes zero.  Every other summand on the
good set has modulus at most $ e^{\theta_J^{-1}+3}$ by
\eqref{sextic:good}.  The triangle inequality gives
\[
 N_6(m)\ge\frac{c_qA_{6,q,G}}{2 e^{\theta_J^{-1}+3}}q^{2m}
          \ge\frac{c_qA_{6,q,G}}{2 e^{e^{52}+3}}q^{2m}.
\]
Divide by \eqref{sextic:count}. The strict inequalities follow from
$A_{6,q,G}>1$ and $e^{52}+3+\log2<e^{53}$.
\end{proof}

The sextic constant uses the same truncation parameters and good-set
bound as the quartic theorem.  Its arithmetic factor is
$A_{6,q,G}$, with first nonconstant local term
$2\rho_P/(9|P|^3)$, in place of the quartic term
$\rho_P/(4|P|^2)$.  In particular, the family-counting factor $c_q$
cancels in the proportion.  The factor is determined by the local series
$e^{-x-\frac{x^2}{2}}/(1-x)$.

\section{Cubic case}\label{sec:cubic}

We apply the truncation argument of Section~\ref{sec:GZ} to the full
non-Kummer cubic family, using the twisted first moment, short-prime
inequality, and fourth-moment bound of David, Florea, and
Lal\'in~\cite{DFL21}. The common truncation parameters and
the exceptional-set estimate give a proportion exceeding
$e^{-e^{53}}$. The twisted input throughout this section is the
published Proposition~8.1 of \cite{DFL21}.

\subsection{The full family and finite orthogonality}

Throughout this section, $q$ is an odd prime power with
$q\equiv2\pmod3$. Put $Q=q^2$ and $m=g/2+1$, where $g$ is even.
The full primitive cubic family is
\begin{equation}\label{cubic:family}
 \mathcal C_3(g)=\left\{\chi_F=
 \left.\left(\frac{\cdot}{F}\right)_3\right|_{\F_q[T]}:
 F\in\AQ\text{ square-free},\ \deg F=m,\ (F,F^\sigma)=1\right\}.
\end{equation}
This is the parameterization in \cite[Section~2]{DFL21}.
Every character is even, has conductor $FF^\sigma$ of degree $2m$,
and has a completed $L$-polynomial of degree $g=2m-2$.
The admissible polynomials are the same as in Lemma~\ref{lem:count},
so
\begin{equation}\label{cubic:count}
 |\mathcal C_3(g)|=c_qq^{2m}+O_{q,\varepsilon}(q^{(1+\varepsilon)m}),
 \qquad c_q\ge(1-q^{-1})^3\ge\frac{64}{125}.
\end{equation}
The factors $\rho_P$ in \eqref{eq:rho} are unchanged.

For arbitrary monic $R\in\AQ$, write
$\chi_R(f)=\left(\frac{f}{R}\right)_3$ on $\Aq$.
The same reciprocity and completion argument as in the proof of
\cite[Lemma~3.6]{DFL21} gives the following form of finite orthogonality.

\begin{lemma}\label{cubic:orthog}
If $f,h\in\Aq$ and $\deg\operatorname{rad}(fh)\le m$, then
\begin{equation}\label{cubic:orthog-formula}
 q^{-2m}\sum_{\substack{R\in\AQ\\\deg R=m}}
 \chi_R(f)\overline{\chi_R(h)}
 =\begin{cases}
 \displaystyle\frac{\phi_Q(fh)}{|fh|_2},&fh^2\text{ is a cube},\\[2mm]
 0,&\text{otherwise}.
 \end{cases}
\end{equation}
In particular, the identity applies term by term to
$|H(\chi)|^2$, with $H(\chi)=\sum_{\deg f\le D}a(f)\chi(f)$,
whenever $2D\le m$.
\end{lemma}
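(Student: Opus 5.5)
The plan is to reduce the average to a complete character sum over residue classes, exactly as in the proof of Lemma~\ref{lem:orthog}, with cubes in place of fourth powers.

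First, rewrite the summand as a character in $R$. Regard $f,h\in\Aq$ as monic elements of $\AQ$. Cubic reciprocity in $\F_Q[T]$ for monic arguments gives
\[
 \left(\frac{a}{b}\right)_3=\left(\frac{b}{a}\right)_3
\]
up to the sign $(-1)^{\frac{Q-1}{3}\deg a\deg b}$. Since $Q$ is odd and $Q\equiv1\pmod3$, we have $6\mid Q-1$, so $(Q-1)/3$ is even and the sign is one. Hence, including vanishing at noncoprime arguments,
\[
 \chi_R(f)\overline{\chi_R(h)}=\left(\frac{R}{f}\right)_3\left(\frac{R}{h}\right)_3^2=\left(\frac{R}{fh^2}\right)_3=:\psi(R).
\]
Here $\psi$ is a Dirichlet character on $\F_Q[T]$ that is periodic modulo $M=\rad(fh)$, the radical being taken over $\F_Q$. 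Degrees over $\F_Q$ agree with degrees over $\F_q$, so the hypothesis gives $\deg M\le m$.

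Second, use equidistribution. Because $\deg M\le m$, each residue class modulo $M$ contains exactly $Q^{m-\deg M}$ monic $R$ of degree $m$. The average therefore equals
\[
 |M|_2^{-1}\sum_{a\bmod M}\psi(a),
\]
which is $\phi_Q(M)/|M|_2$ if $\psi$ is principal modulo $M$ and $0$ otherwise. Since $\phi_Q(n)/|n|_2$ depends only on $\rad n$, the principal value equals $\phi_Q(fh)/|fh|_2$.

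Third, identify when $\psi$ is principal; this is the only point needing care. Factor $fh^2=\prod\pi^{e_\pi}$ over $\F_Q$. By the Chinese remainder theorem $\psi$ is the product of the local characters $(\cdot/\pi)_3^{e_\pi}$, and these are independent across distinct $\pi$. Each local character is trivial exactly when $3\mid e_\pi$, so $\psi$ is principal iff $fh^2$ is a cube in $\F_Q[T]$. To pass back to the base field, note how primes of $\F_q[T]$ factor. An odd-degree prime $P$ stays prime over $\F_Q$. An even-degree prime splits as $\pi\pi^\sigma$, and both factors carry the same exponent as $P$. So the exponents over $\F_Q$ are the exponents over $\F_q$, and $fh^2$ is a cube over $\F_Q$ iff it is a cube in $\Aq$. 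This proves \eqref{cubic:orthog-formula}.

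Finally, for $H(\chi)=\sum_{\deg f\le D}a(f)\chi(f)$, expand $|H|^2$ as $\sum_{f,h}a(f)\overline{a(h)}\chi(f)\overline{\chi(h)}$. Each term satisfies $\deg\rad(fh)\le\deg f+\deg h\le2D\le m$, so the identity applies term by term. I expect the only real obstacle to be the reciprocity sign and the compatibility of degrees and cubes between $\F_q$ and $\F_Q$; both are handled above.
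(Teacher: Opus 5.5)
Your proof is correct and follows the paper's argument. Reciprocity turns the summand into $\left(\frac{R}{fh^2}\right)_3$, which is induced modulo $\rad(fh)$; equidistribution of monic degree-$m$ polynomials modulo this radical then reduces the average to a complete character sum, and that sum is nonzero exactly when $fh^2$ is a cube. Your explicit checks of the reciprocity sign (via $6\mid Q-1$) and of how radicals and cubes correspond between $\F_q[T]$ and $\F_Q[T]$ supply details the paper leaves implicit.
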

\begin{proof}
Cubic reciprocity gives
$\chi_R(f)\overline{\chi_R(h)}=\left(\frac{R}{fh^2}\right)_3$.
The character on the right is induced modulo $\operatorname{rad}(fh)$,
with value zero on nonunits. Monic polynomials of degree $m$ are
uniformly distributed modulo this radical. The complete sum vanishes
unless $fh^2$ is a cube; in the principal case it is the proportion
of invertible residue classes, namely $\phi_Q(fh)/|fh|_2$.
This argument applies to arbitrary monic $R$, without a square-free
restriction. Finally, $\deg\operatorname{rad}(fh)\le2D$ for the
monomials arising from $|H|^2$.
\end{proof}

Let $X_P$ be independent variables with
\begin{equation}\label{cubic:model}
 \Pr(X_P=0)=1-\gamma_P,\qquad
 \Pr(X_P=\xi)=\frac{\gamma_P}{3}\quad(\xi^3=1),
\end{equation}
where $\gamma_P$ is defined in \eqref{eq:gamma}. Lemma~\ref{cubic:orthog}
gives
\begin{equation}\label{cubic:finite-average}
 \sum_{\chi\in\mathcal C_3(g)}|H(\chi)|^2
 \le q^{2m}\E|H(X)|^2\qquad(2D\le m).
\end{equation}
The expectation represents the completed finite character average;
the original restricted family is not assumed to have independent
local values. In particular, $\gamma_P$ and $\rho_P$ have different
roles.

\subsection{The analytic inputs and the mollifier}

By \cite[Lemma~3.1 and Proposition~6.1]{DFL21},
\begin{align}
 \log|L_q(1/2,\chi)|
 &\le\Rea\sum_{\deg f\le N}
 \frac{\Lambda(f)\chi(f)(N-\deg f)}
      {N|f|^{1/2+1/(N\log q)}\deg f}
 +\frac{2m}{N},\qquad 1\le N\le2m,
 \label{cubic:short-input}\\
 \sum_{\chi\in\mathcal C_3(g)}|L_q(1/2,\chi)|^4
 &\ll_{q,\varepsilon}q^{2m}m^{4+\varepsilon}.
 \label{cubic:fourth}
\end{align}
The convention in the first inequality is $\log0=-\infty$.

Use the parameters in \eqref{eq:GZparameters}, so that
$e^{-52}<\theta_J\le e^{-51}$ and $J=O(\log\log m)$.
The cutoff $B$, intervals $I_j$, weights $a_N(P)$, and polynomials
$E_\ell$ have the definitions in Section~\ref{sec:GZ}. Define
\begin{align}
 \mathcal P^{(3)}_{r,N_u}(\chi)
 &=\sum_{\deg P\in I_r}
     \frac{a_{N_u}(P)\chi(P)}{\sqrt{|P|}}
   +\mathbf1_{\{r=0\}}\frac12\sum_{\deg P\le B}
                                  \frac{\chi(P)^2}{|P|},
                                  \label{cubic:blocks}\\
 M_{3,G}(\chi)
 &=\prod_{r=0}^J
 E_{\ell_r}\!\left(-\frac{\mathcal P^{(3)}_{r,N_J}(\chi)}2\right)^2
 =\sum_{\deg h\le L}\frac{b_m(h)\chi(h)}{\sqrt{|h|}}.
 \label{cubic:mollifier}
\end{align}
The prime-square term is common to all scales. Since
$\chi^2=\overline\chi$ is primitive and nonprincipal, Weil's theorem
gives
\[
 \left|\sum_{\deg P=d}\chi(P)^2\right|
 \ll_q\frac{mq^{d/2}}d.
\]
The proof of Lemma~\ref{lem:GZshort} therefore yields
\begin{equation}\label{cubic:short-block}
 \log|L_q(1/2,\chi)|
 \le\Rea\sum_{r=0}^j\mathcal P^{(3)}_{r,N_j}(\chi)
       +\theta_j^{-1}+2.
\end{equation}
Indeed, the square-term replacement has error
$O_q(B/N_j+mq^{-B/2}/(B+1))=o(1)$, and all higher prime powers
have a bounded absolute contribution.

The coefficient argument in Lemma~\ref{lem:GZlength} gives
$|b_m(h)|\le1$ and allows the choice $L=2\sum_{r=0}^J\ell_rN_r$.
Since $\ell_r\le100\theta_r^{-3/4}+2$, summing the geometric
series with the parameters \eqref{eq:GZparameters} gives
\begin{equation}\label{cubic:length}
 \frac Lm\le
 \frac{400e^{-51/4}}{1-e^{-1/4}}
 +\frac{8e^{-51}}{1-e^{-1}}
 <0.005248338.
\end{equation}
More precisely, if the multiplicative function $H$ is defined by
\[
 \sum_{v\ge0}H(P^v)x^v=e^{x+x^2/2},
\]
then $|b_m(h)|\le H(h)\le1$. For each fixed $h$, the coefficients
converge to $b_\infty(h)$, where
\begin{equation}\label{cubic:limit-coefficients}
 \sum_{v\ge0}b_\infty(P^v)x^v=e^{-x-\frac{x^2}{2}}.
\end{equation}

Write
\[
 h=CS^2E^3,\qquad h^\ast=C^2S,
\]
where $C,S$ are square-free and coprime; no coprimality with $E$ is
required. Thus $hf$ is a cube precisely when $f=h^\ast A^3$.

\begin{proposition}[David, Florea, and Lal\'in]\label{cubic:twisted}
For $\deg h<(1/10-\varepsilon)g$,
\begin{align}\label{cubic:twisted-formula}
 \sum_{\chi\in\mathcal C_3(g)}\chi(h)L_q(1/2,\chi)
 ={}&q^{2m}\mathcal M_{3,q}(h)
 +O_{q,\varepsilon}\!\left(q^{7g/8+\deg h/4+\varepsilon g}\right),
 \\
 \mathcal M_{3,q}(h)
 ={}&\frac{c_q}{\sqrt{|h^\ast|}}
       \sum_{A\in\Aq}\frac{\rho(hA)}{|A|^{3/2}}.
       \label{cubic:moment-constant}
\end{align}
\end{proposition}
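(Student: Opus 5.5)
This proposition is a restatement of \cite[Proposition~8.1]{DFL21}, so the plan is to translate the published statement into the present notation rather than reprove the twisted moment. I would begin by matching the two parameterizations. The family \eqref{cubic:family} is exactly the parameterization of \cite[Section~2]{DFL21}: square-free $F\in\AQ$ of degree $m$ with $(F,F^\sigma)=1$, conductor $FF^\sigma$ and $g=2m-2$. Next I would check that their normalizations of the cubic symbol, of $\chi(h)$ versus $\overline{\chi}(h)$, and of the sum over $\chi$ (conjugates counted separately) agree with ours. If the cited result is stated with a conjugated twist, I would replace $h$ by $h^\ast$ or use conjugation symmetry; the main term is real and positive, so this does not change it. Under these identifications the error term $q^{7g/8+\deg h/4+\varepsilon g}$ and the range $\deg h<(1/10-\varepsilon)g$ can be read off directly.

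The only substantive step is to identify their main term with $q^{2m}\mathcal M_{3,q}(h)$. I would recompute it with the same local calculation as in Proposition~\ref{prop:first-factor}. Expand $L_q(1/2,\chi_F)=\sum_f\chi_F(f)|f|^{-1/2}$ and keep the diagonal terms in which $hf$ is a cube. Since $h=CS^2E^3$, this happens exactly when $f=h^\ast A^3$. On the diagonal, $\chi_F(hf)$ is the indicator of $(F,hf)=1$, so by Lemma~\ref{lem:count} and the remark after it, the count of admissible $F$ of degree $m$ is $c_q\rho(hA)q^{2m}+O(q^{(1+\varepsilon)m})$. Here $\rho(hf)=\rho(hA)$ because $h^\ast$ has the same prime divisors as a divisor of $h$. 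The diagonal weight is $|h^\ast|^{-1/2}|A|^{-3/2}$. Summing over $A$ gives \eqref{cubic:moment-constant}, and the sum converges absolutely because $\rho\le1$ and $\sum_A|A|^{-3/2}=\zeta_q(3/2)$.

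I would then compare this Euler product with the constant written in \cite{DFL21}, prime by prime: separate odd- and even-degree $P$, and $P\mid h$ from $P\nmid h$, exactly as in \eqref{eq:Mmain-product}.

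The main obstacle I expect is purely bookkeeping. It is to confirm that the published main term uses the density $\rho(hA)$ rather than $\rho(h^\ast A)$, that is, that primes dividing the cube part $E^3$ still exclude conductors. It is also to confirm that the off-diagonal and dual contributions in \cite{DFL21}, the secondary terms from the functional equation, are absorbed into the stated error term in the range $\deg h<(1/10-\varepsilon)g$ and not into the main term. If the published formula carries an explicit secondary term of size $q^{(3/2+\cdots)m}$, I would check that it is $O(q^{7g/8+\varepsilon g})$ in this range, since $7g/8=(7/4)(m-1)$, and absorb it into the error.
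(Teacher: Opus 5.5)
Your main line is the paper's proof. You cite \cite[Proposition~8.1]{DFL21} for the family \eqref{cubic:family} and identify its constant with $\mathcal M_{3,q}(h)$ prime by prime. The paper does exactly this. For $P\nmid h$ the local factor is $F_P=1+\rho_P|P|^{-3/2}/(1-|P|^{-3/2})$. For $P\mid h$ the correction is $\rho_P/(1-(1-\rho_P)|P|^{-3/2})$, which equals $1$ at odd degree and $1/(1+2|P|^{-1}(1-|P|^{-3/2}))$ at even degree; this is the factor in (8.15) there. The paper then uses $\sqrt{|h^\ast|}=|C|\sqrt{|S|}$ to match (8.14). This also settles your $\rho(hA)$ versus $\rho(h^\ast A)$ concern.

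Your fallback for the case where the published twist is conjugated is wrong, however, and should not be relied on.

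\begin{itemize}
\item Conjugation symmetry of the family turns $\sum_\chi\overline\chi(h)L_q(1/2,\chi)$ into $\sum_\chi\chi(h)\overline{L_q(1/2,\chi)}$, not into $\sum_\chi\chi(h)L_q(1/2,\chi)$. Cubic central values need not be real.
\item The two main terms genuinely differ. For $\overline\chi(h)$ the diagonal is $f=CS^2A^3$, so the normalization is $\sqrt{|C|}\,|S|$ instead of $|C|\sqrt{|S|}$. Reality and positivity of the main term therefore do not make the two sums interchangeable.
\item Replacing $h$ by $h^\ast$ also fails. One only has $\chi(h)=\mathbf 1_{(F,h)=1}\overline\chi(h^\ast)$. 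The substitution drops the exclusion of conductors divisible by primes of the cube part $E$, which produces exactly the $\rho(h^\ast A)$ density you warned against. Moreover, $\deg h^\ast$ can be as large as $2\deg h$.
\item The exact identity is $\chi(h)=\overline\chi(h^2)$. Using it would double $\deg h$ in both the admissible range and the error term.
\end{itemize}

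So the check $\sqrt{|h^\ast|}=|C|\sqrt{|S|}$ against the published normalization is a substantive part of the citation argument, not a formality. With the orientation as in \cite{DFL21}, no substitution is needed.
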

\begin{proof}
This is \cite[Proposition~8.1]{DFL21} in our notation. To identify the
constant, the local factor for $P\nmid h$ in the $A$-sum is
\[
 F_P=1+\frac{\rho_P|P|^{-3/2}}{1-|P|^{-3/2}}.
\]
For $P\mid h$, the correction relative to $F_P$ is
$\rho_P/(1-(1-\rho_P)|P|^{-3/2})$. It equals one at odd-degree
primes and
\[
 \frac1{1+2|P|^{-1}(1-|P|^{-3/2})}
\]
at even-degree primes. This is the factor in
\cite[(8.15)]{DFL21}. Since $\sqrt{|h^\ast|}=|C|\sqrt{|S|}$,
multiplying by $c_q\prod_PF_P$ gives the normalization in
\cite[(8.14)]{DFL21}.
\end{proof}

\subsection{The mollified first moment}

For $\ell=3$, the factor in \eqref{intro:arithmetic} is
\begin{equation}\label{cubic:arithmetic}
 A_{3,q,G}=\prod_P\left(1+\rho_P\sum_{k\ge1}
                              \frac{d_{3k}}{|P|^{3k/2}}\right),
 \qquad 1<A_{3,q,G}\le\zeta_q(3/2).
\end{equation}
Its first nonconstant local term is $\rho_P/(3|P|^{3/2})$.

\begin{proposition}\label{cubic:first}
The mollifier \eqref{cubic:mollifier} satisfies
\begin{equation}\label{cubic:mollified-first}
 \sum_{\chi\in\mathcal C_3(g)}L_q(1/2,\chi)M_{3,G}(\chi)
 =\bigl(c_qA_{3,q,G}+o(1)\bigr)q^{2m}.
\end{equation}
\end{proposition}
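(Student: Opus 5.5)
The argument follows the proofs of Propositions~\ref{prop:GZfirst} and~\ref{sextic:mollified-first}, with Proposition~\ref{cubic:twisted} supplying the twisted first moment. We expand $M_{3,G}$ by \eqref{cubic:mollifier} and interchange sums, which gives
\[
 \sum_{\chi\in\mathcal C_3(g)}L_q(1/2,\chi)M_{3,G}(\chi)
 =\sum_{\deg h\le L}\frac{b_m(h)}{\sqrt{|h|}}
 \sum_{\chi\in\mathcal C_3(g)}\chi(h)L_q(1/2,\chi).
\]
By \eqref{cubic:length}, $L<0.005248338\,m$. Since $g=2m-2$, this is below $(1/10-\varepsilon)g$ for large $m$, so Proposition~\ref{cubic:twisted} applies to every twist.

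The first step is the error term. We sum $|h|^{-1/2}q^{7g/8+\deg h/4+\varepsilon g}$ over monic $h$ of degree at most $L$, using $|b_m(h)|\le1$. The degree-$n$ contribution is $q^{n}q^{-n/2}q^{n/4}=q^{3n/4}$, so the total is
\[
 \ll q^{7g/8+\varepsilon g+3L/4}\ll q^{(7/4+2\varepsilon)m+(3/4)\lambda m}.
\]
With $\lambda=0.005248338$ we have $7/4+3\lambda/4<1.754<2$, so this is $o(q^{2m})$ for small $\varepsilon$.

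The second step is the main term, which after normalization is
\[
 c_q\sum_{h}\frac{b_m(h)}{\sqrt{|hh^\ast|}}\sum_{A\in\Aq}\frac{\rho(hA)}{|A|^{3/2}}.
\]
We pass to the limit $m\to\infty$ by dominated convergence. For each fixed $h$, all its prime factors eventually lie in the first block and have degree at most $B$. The weights $a_{N_J}(P)$ tend to one and $\ell_0\to\infty$, so $b_m(h)\to b_\infty(h)$, with $b_\infty$ given by \eqref{cubic:limit-coefficients}.

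For the dominating function we use the majorant $H(h)$ from $e^{x+x^2/2}$, and we need $\sum_h H(h)|hh^\ast|^{-1/2}\sum_A\rho(hA)|A|^{-3/2}<\infty$. Here $|hh^\ast|$ is the smallest cube multiple of $h$, so for $h=P^v$ we have $\sqrt{|hh^\ast|}=|P|^{3\lceil v/3\rceil/2}$. The local $h$-factor is therefore $1+\sum_{v\ge1}H(P^v)|P|^{-3\lceil v/3\rceil/2}=1+O(|P|^{-3/2})$, using $\sum_vH(P^v)=e^{3/2}$. The $A$-sum is at most $\zeta_q(3/2)$. Since $\sum_P|P|^{-3/2}$ converges, the majorant converges.

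This convergence check is the main obstacle: the cubic weights decay only like $|P|^{-3/2}$, but that is still absolutely summable. If the product needs care, we bound the $h$-sum and $A$-sum together as a single Euler product.

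The final step identifies the limit. We combine the $h$- and $A$-sums into one Euler product. At $P$, with exponent $v$ in $h$ and $3k'$ in $A$, the weight is $b_\infty(P^v)|P|^{-(v+3k')/2}$ times $\rho_P$ if $v+k'>0$. Writing the total exponent as $3k=v+3k'$ (with $v$ not necessarily below $3$, since $h^\ast$ absorbs cubes), the local factor becomes
\[
 1+\rho_P\sum_{k\ge1}|P|^{-3k/2}\sum_{v\le3k}b_\infty(P^v).
\]
The partial sums $\sum_{v\le n}b_\infty(P^v)$ are exactly the coefficients $d_n$ of $e^{-x-x^2/2}/(1-x)$ in \eqref{eq:GZlocal}. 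Taking $n=3k$ recovers the local factor of $A_{3,q,G}$ in \eqref{cubic:arithmetic}, which gives \eqref{cubic:mollified-first}.
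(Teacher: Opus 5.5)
Your proposal is correct and follows the paper's proof essentially step for step: you apply Proposition~\ref{cubic:twisted} termwise and sum the error to $q^{7g/8+3L/4+\varepsilon g}$, pass to the limit by dominated convergence with the majorant $H$ (local factor $1+O(|P|^{-3/2})$), and identify the limit through the partial sums $\sum_{v\le3k}b_\infty(P^v)=d_{3k}$. There is one bookkeeping slip in the last step: for $h=P^v$ and $A=P^{a}$ the weight is $|P|^{-3(\lceil v/3\rceil+a)/2}$, not $|P|^{-(v+3a)/2}$ unless $3\mid v$, but since $\lceil v/3\rceil+a=k$ has exactly one solution $a\ge0$ for each $v\le3k$, your stated local factor is still correct.
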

\begin{proof}
Put $\lambda=0.005248338$. For sufficiently large $g$,
\eqref{cubic:length} gives $L<\lambda m<g/20$.
Choose a fixed sufficiently small $\varepsilon>0$, with
$\varepsilon<1/20$, so that Proposition~\ref{cubic:twisted}
applies to every twist. Its total error is
\[
 q^{7g/8+\varepsilon g}
 \sum_{\deg h\le L}|b_m(h)||h|^{-1/4}
 \ll_q q^{7g/8+3L/4+\varepsilon g}
 \ll_q q^{(7/4+3\lambda/4+2\varepsilon)m}=o(q^{2m}),
\]
since
\[
 \frac74+\frac34\lambda<1.753937<2.
\]
Thus the published twisted first moment has sufficient uniformity
for the parameters \eqref{eq:GZparameters}.
The normalized main term is
\[
 c_q\sum_{h,A\in\Aq}
 \frac{b_m(h)\rho(hA)}{\sqrt{|hh^\ast|}\,|A|^{3/2}}.
\]
Dominated convergence follows from
\[
 \sum_{h\in\Aq}\frac{H(h)}{\sqrt{|hh^\ast|}}
 =\prod_P\left(1+\sum_{v\ge1}
 H(P^v)|P|^{-3\lceil v/3\rceil/2}\right)<\infty
\]
and the absolute convergence of $\zeta_q(3/2)$.
At each prime, the total exponent in $h$ and the $L$-series must be
divisible by three. By \eqref{cubic:limit-coefficients} and
\eqref{eq:GZlocal}, the local sum is
\[
 1+\rho_P\sum_{k\ge1}
    \left(\sum_{v=0}^{3k}b_\infty(P^v)\right)|P|^{-3k/2}
 =1+\rho_P\sum_{k\ge1}d_{3k}|P|^{-3k/2}.
\]
Multiplication gives \eqref{cubic:mollified-first}.
\end{proof}

\subsection{The exceptional set}

Define
\begin{equation}\label{cubic:good-set}
 \mathcal T^{(3)}=
 \left\{\chi\in\mathcal C_3(g):
 |\mathcal P^{(3)}_{r,N_u}(\chi)|\le K_r
 \text{ for all }0\le r\le u\le J\right\}.
\end{equation}
Let $\mathcal B_n^{(3)}$ consist of characters for which $n$ is the
first block violating this condition. On the good set,
\eqref{cubic:short-block} and \eqref{eq:GZpointTaylor} give
\begin{equation}\label{cubic:pointwise}
 |L_q(1/2,\chi)M_{3,G}(\chi)|
 \le e^{\theta_J^{-1}+3}<e^{e^{52}+3}.
\end{equation}

We check the probability and Taylor estimates with
\eqref{eq:GZparameters}. The probability bounds in
Section~\ref{sec:GZ} use independent, centered variables of modulus
at most one, and therefore hold for \eqref{cubic:model}.
In the first block, $\E X_P=\E X_P^2=0$; grouping the two terms
at a prime gives a variance parameter at most $3\log m$.
For the Taylor estimate, the prime sum has coefficient square sum
at most $1+\log(2m)$, and the square sum has modulus at most
$(1+\log B)/2$. Thus $\ell_0\asymp(\log m)^{3/2}$ satisfies the
hypothesis of Lemma~\ref{lem:GZTaylor} for sufficiently large $m$.
For every later block the variance is at most two, there is no
square term, and
\[
 \ell_r\ge100e^{153/4}e^{3(J-r)/4}>3\cdot10^{10}.
\]
The same lower bound also shows, uniformly in $J$, that
$\sum_{r=0}^Je^{-\ell_r}<10^{-3}$.
Consequently both Taylor estimates and the matched-block bound
\eqref{eq:GZmatched} hold with these cubic parameters.

\begin{proposition}\label{cubic:exceptional}
For every fixed $A>0$,
\begin{align}\label{cubic:exceptional-bound}
 \sum_{\chi\notin\mathcal T^{(3)}}
       |L_q(1/2,\chi)M_{3,G}(\chi)|
 \le{}&q^{2m}\frac{e^{20}}{(1-e^{-1})^2}
                 e^{-\frac{0.03}{\theta_J}}
       +O_{q,A}(q^{2m}m^{-A}).
\end{align}
\end{proposition}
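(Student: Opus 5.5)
We split the complement of $\mathcal T^{(3)}$ as $\mathcal B_0^{(3)}\cup\bigcup_{n=1}^J\mathcal B_n^{(3)}$ and copy the quartic argument: Proposition~\ref{prop:GZlater} for the later blocks and Lemma~\ref{lem:GZinitial} for the initial block. The only changes are the orthogonality input, now Lemma~\ref{cubic:orthog} with the model \eqref{cubic:model}, and the rough fourth moment, now the published bound \eqref{cubic:fourth}. The initial block then yields the $O_{q,A}(q^{2m}m^{-A})$ term.

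\emph{Later blocks.} For $1\le n\le J$, on $\mathcal B_n^{(3)}$ we apply \eqref{cubic:short-block} at the shorter scale $N_{n-1}$. For $r<n$ we use \eqref{eq:GZpointTaylor} to majorize $e^{\Rea\mathcal P^{(3)}_{r,N_{n-1}}}$ by $e\,|E_{\ell_r}(\mathcal P^{(3)}_{r,N_{n-1}}/2)|^2$ up to a total factor $e$. We bound $1_{\mathcal B_n^{(3)}}$ by $\sum_{u\ge n}|\mathcal P^{(3)}_{n,N_u}/K_n|^{2s_n}$ with $s_n=\lfloor1/(8\theta_n)\rfloor$. This gives a nonnegative expression of the form $|H(\chi)|^2$ with $H$ a finite Dirichlet polynomial of degree at most $L+s_nN_n\le m/2$, by \eqref{cubic:length}. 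Hence \eqref{cubic:finite-average} applies after extending the sum to all monic $R$ of degree $m$. The expectation factors over blocks, since distinct blocks involve disjoint primes and the prime-square term lies only in block $0$. The matched blocks $r<n$ cost at most $e^3$ by \eqref{eq:GZmatched}, which we have already checked for the cubic parameters. The failing block costs at most $4e^4(16s_n)^{s_n}K_n^{-2s_n}$ by H\"older and the moment bounds, and each later block costs at most $e^4$. This reproduces \eqref{eq:GZbadblock}, and the summation in Proposition~\ref{prop:GZlater}, which uses only $\theta_J\le e^{-51}$ and $51/18-e-(\log2)/9>0.03$, gives the first term of \eqref{cubic:exceptional-bound}.

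\emph{Initial block.} We have $|\mathcal P^{(3)}_{0,N_u}-Z_u|\le(1+\log B)/2=o(K_0)$, so failure forces $|Z_u|>K_0/2$ for some $u$. We take $s_0=\lfloor K_0\rfloor$, for which $2s_0N_0=o(m)$. Markov's inequality with the model moment bound (variance $\le1+\log(2m)$) gives $q^{-2m}|\mathcal B_0^{(3)}|\le e^{-c(\log m)^{3/2}\log\log m}$. For $M_{3,G}$ we bound the fourth moment exactly as in Lemma~\ref{lem:GZrough}: $4L<m$ permits orthogonality, and we obtain $q^{-2m}\sum|M_{3,G}|^4\ll m^{C'}$. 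H\"older with exponents $2,4,4$ together with \eqref{cubic:fourth} then bounds the $\mathcal B_0^{(3)}$ contribution by $q^{2m}e^{-c'(\log m)^{3/2}\log\log m}m^{O(1)}\ll_A q^{2m}m^{-A}$.

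The main obstacle is a fully rigorous transfer of the positivity and factorization. The restricted family sum must be dominated by the complete average, which requires each majorant to be genuinely of the form $|H|^2$ with radical degree $\le m$. We must also check that the block-$0$ square term $\frac12\sum\chi(P)^2/|P|$ does not break independence across blocks. It does not, since the square term involves only primes of degree $\le B<N_0$, all of which lie in block $0$. Finally, the cubic model has $\E X_P^2=0$ rather than a quartic-type second moment, and the Hoeffding-type bounds of Lemma~\ref{lem:model-bounds} still apply to it because the $X_P$ are centered and bounded by one.
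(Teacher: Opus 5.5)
Your proposal is correct and follows the paper's proof essentially step for step. It uses the same majorant $\sum_{u\ge n}|H_{n,u}|^2$ at scale $N_{n-1}$ with the check $2D<m$, the same block costs $e^3$, $4e^4(16s_n)^{s_n}K_n^{-2s_n}$ and $e^4$, the same summation over $t=J-n$, and H\"older with exponents $2,4,4$ combining \eqref{cubic:fourth} with the $|M_{3,G}|^4$ bound on $\mathcal B_0^{(3)}$. The only difference is cosmetic, in counting $\mathcal B_0^{(3)}$: you split off $Q_B$ deterministically and use threshold $K_0/2$ with $s_0=\lfloor K_0\rfloor$, as in Lemma~\ref{lem:GZinitial}, whereas the paper groups the linear and square terms at each prime via $\E X_P=\E X_P^2=0$, with variance parameter $3\log m$ and $s_0=\lceil K_0\rceil$; both give $e^{-c(\log m)^{3/2}\log\log m}$.
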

\begin{proof}
For $n\ge1$, put $s_n=\lfloor1/(8\theta_n)\rfloor$. On
$\mathcal B_n^{(3)}$, apply \eqref{cubic:short-block} at scale
$N_{n-1}$ and majorize the earlier exponentials by their Taylor
polynomials. For $u\ge n$, the resulting modulus square is that of
\begin{align*}
 H_{n,u}(\chi)={}&
 \left(\frac{\mathcal P^{(3)}_{n,N_u}(\chi)}{K_n}\right)^{s_n}
 \prod_{r<n}E_{\ell_r}\!\left(
           \frac{\mathcal P^{(3)}_{r,N_{n-1}}(\chi)}2\right)
 \prod_{r=0}^J E_{\ell_r}\!\left(
          -\frac{\mathcal P^{(3)}_{r,N_J}(\chi)}2\right).
\end{align*}
Its degree $D$ satisfies
\begin{equation}\label{cubic:degree-budget}
 D\le s_nN_n+L,\qquad
 2D\le\frac m2+2L<0.510497m<m.
\end{equation}
Thus \eqref{cubic:finite-average} applies after the indicator is
bounded by $\sum_{u=n}^J|\mathcal P^{(3)}_{n,N_u}/K_n|^{2s_n}$.

For the preceding blocks, the common prime-square term cancels,
and the total variance of the difference of prime weights is at
most four. The argument for \eqref{eq:GZmatched} bounds their
product of expectations by $e^3$. In block $n$, the two prime sums
have variance at most two, and the moment and exponential bounds
give
\[
 \E\left\{|Z|^{2s_n}|E_{\ell_n}(-W/2)|^2\right\}
 \le4e^4(16s_n)^{s_n}.
\]
Each subsequent block contributes at most $4e^2<e^4$. Consequently,
\begin{align}\label{cubic:single-block}
 q^{-2m}\sum_{\chi\in\mathcal B_n^{(3)}}
        |L_q(1/2,\chi)M_{3,G}(\chi)|
 \le{}&e^{20}(J-n+1)e^{\theta_{n-1}^{-1}+4(J-n)}
          \left(\frac{16s_n}{K_n^2}\right)^{s_n}.
\end{align}
This is the same bound as \eqref{eq:GZbadblock}.
Writing $t=J-n$, the inequalities
$\theta_n=\theta_Je^{-t}$ and $s_n\ge1/(9\theta_n)$ give
\[
 \theta_{n-1}^{-1}+4t+s_n\log\frac{16s_n}{K_n^2}
 \le\frac{e+(\log2)/9-(51+t)/18}{\theta_n}+4t
 \le-\frac{0.03}{\theta_n}-t.
\]
Indeed,
\[
 \frac{51}{18}-e-\frac{\log2}{9}>0.038035>0.03,
\]
and $\theta_n\le e^{-51}$ ensures that $t/(18\theta_n)\ge5t$.
Since $\theta_n\le\theta_J$, summing
$(t+1)e^{-0.03/\theta_n-t}$ over $t\ge0$ and using
$\sum_{t\ge0}(t+1)e^{-t}=(1-e^{-1})^{-2}$ yields the first
term in \eqref{cubic:exceptional-bound}.

For the initial block, take $s_0=\lceil K_0\rceil$.
Since $K_0=(\log m)^{3/2}$ and $2s_0N_0=o(m)$,
finite orthogonality and the bound $V_0\le3\log m$ imply
\begin{equation}\label{cubic:initial-count}
 q^{-2m}|\mathcal B_0^{(3)}|
 \le4(J+1)\left(\frac{12s_0\log m}{K_0^2}\right)^{s_0}
 \le e^{-c(\log m)^{3/2}\log\log m}
\end{equation}
for some absolute $c>0$.
The polynomial inside the modulus square representing $|M_{3,G}|^4$
has degree at most $2L$. By \eqref{cubic:length},
$4L/m<0.020994<1$, so another application of
\eqref{cubic:finite-average} gives
\begin{equation}\label{cubic:mollifier-fourth}
 q^{-2m}\sum_{\chi\in\mathcal C_3(g)}|M_{3,G}(\chi)|^4
 \le4^{J+1}e^{48\log m+32J}\ll m^{50}.
\end{equation}
H\"older's inequality, \eqref{cubic:fourth}, and
\eqref{cubic:initial-count}--\eqref{cubic:mollifier-fourth} now give
\begin{align*}
 \sum_{\chi\in\mathcal B_0^{(3)}}|L_q(1/2,\chi)M_{3,G}(\chi)|
 &\le |\mathcal B_0^{(3)}|^{1/2}
 \left(\sum_{\chi\in\mathcal C_3(g)}|L_q(1/2,\chi)|^4\right)^{1/4}
 \left(\sum_{\chi\in\mathcal C_3(g)}|M_{3,G}(\chi)|^4\right)^{1/4}\\
 &\ll_{q,A}q^{2m}m^{-A}.
\end{align*}
This proves \eqref{cubic:exceptional-bound}.
\end{proof}

\subsection{An improved lower bound}

\begin{theorem}\label{cubic:positive}
For every fixed odd prime power $q\equiv2\pmod3$,
\begin{equation}\label{cubic:positive-bound}
 \liminf_{\substack{g\to\infty\\2\mid g}}
 \frac{\#\{\chi\in\mathcal C_3(g):L_q(1/2,\chi)\ne0\}}
      {|\mathcal C_3(g)|}
 \ge\frac{A_{3,q,G}}{2 e^{e^{52}+3}}
 >\frac1{2 e^{e^{52}+3}}
 > e^{-e^{53}}.
\end{equation}
\end{theorem}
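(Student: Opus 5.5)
The proof will mirror that of Theorem~\ref{thm:GZpositive}, with the cubic inputs already assembled in this section. I first combine Proposition~\ref{cubic:first} with Proposition~\ref{cubic:exceptional}, the latter taken with any fixed $A>0$ (say $A=1$). This splits the mollified first moment into the good-set part and an absolute exceptional part:
\[
 \sum_{\chi\in\mathcal T^{(3)}}L_q(1/2,\chi)M_{3,G}(\chi)
 =\bigl(c_qA_{3,q,G}+o(1)\bigr)q^{2m}
 -\sum_{\chi\notin\mathcal T^{(3)}}L_q(1/2,\chi)M_{3,G}(\chi).
\]
By \eqref{cubic:exceptional-bound}, the modulus of the last sum is at most
\[
 \left(\frac{e^{20-0.03/\theta_J}}{(1-e^{-1})^2}+o(1)\right)q^{2m}
 \le\left(\frac{e^{20-0.03e^{51}}}{(1-e^{-1})^2}+o(1)\right)q^{2m},
\]
because $\theta_J\le e^{-51}$. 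This constant is astronomically small. Since $c_q\ge 64/125$ by \eqref{cubic:count} and $A_{3,q,G}>1$ by \eqref{cubic:arithmetic}, it is less than $c_qA_{3,q,G}/4$. Hence, for $m$ sufficiently large, the modulus of the good-set sum is at least $\tfrac12 c_qA_{3,q,G}q^{2m}$.

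Next I apply the pointwise bound \eqref{cubic:pointwise} on $\mathcal T^{(3)}$. Every summand has modulus at most $e^{\theta_J^{-1}+3}$, and characters with $L_q(1/2,\chi)=0$ contribute nothing. The triangle inequality then gives
\[
 \#\{\chi\in\mathcal C_3(g):L_q(1/2,\chi)\ne0\}\ge\frac{c_qA_{3,q,G}}{2e^{\theta_J^{-1}+3}}q^{2m}\ge\frac{c_qA_{3,q,G}}{2e^{e^{52}+3}}q^{2m},
\]
where the second inequality uses $\theta_J>e^{-52}$. Dividing by $|\mathcal C_3(g)|=c_qq^{2m}(1+o(1))$ from \eqref{cubic:count} cancels $c_q$ and gives the first inequality in \eqref{cubic:positive-bound} for the liminf over even $g$.

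The remaining strict inequalities are elementary. The middle one follows from $A_{3,q,G}>1$, which holds because every $d_{3k}\ge0$ and $d_3=1/3>0$. The last one is $\log 2+e^{52}+3<e^{53}$.

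The only delicate point is confirming that every input is available in the cubic setting with these parameters. This has already been checked in the preceding subsections. The mollifier length satisfies \eqref{cubic:length}, so the published error in Proposition~\ref{cubic:twisted} gives power saving. The degree budget \eqref{cubic:degree-budget} permits finite orthogonality. The fourth-moment input \eqref{cubic:fourth} makes the initial block $O(m^{-A})$.

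I therefore expect no new obstacle. The main step to verify carefully is the numerical comparison
\[
 \frac{e^{20-0.03e^{51}}}{(1-e^{-1})^2}<\frac{c_q}{4},
\]
and this is immediate.
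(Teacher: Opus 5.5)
Your proposal is correct and follows the paper's proof essentially step for step. You subtract the exceptional-set bound of Proposition~\ref{cubic:exceptional} from Proposition~\ref{cubic:first}, apply the pointwise bound \eqref{cubic:pointwise} with the triangle inequality, and divide by \eqref{cubic:count}. The only cosmetic difference is that you compare the exceptional constant with $c_qA_{3,q,G}/4$ rather than the paper's $c_qA_{3,q,G}/8$, which changes nothing in the argument.
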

\begin{proof}
The explicit exceptional-set constant is bounded by
\[
 \frac{e^{20}}{(1-e^{-1})^2}e^{-0.03e^{51}}
 <\frac18\cdot\frac{64}{125}<\frac18c_qA_{3,q,G}.
\]
Subtracting \eqref{cubic:exceptional-bound} from
\eqref{cubic:mollified-first} therefore gives, for sufficiently
large $m$,
\[
 \left|\sum_{\chi\in\mathcal T^{(3)}}
                L_q(1/2,\chi)M_{3,G}(\chi)\right|
 \ge\frac12c_qA_{3,q,G}q^{2m}.
\]
By \eqref{cubic:pointwise}, every nonzero summand has modulus at
most $e^{e^{52}+3}$. The triangle inequality and
\eqref{cubic:count} give the first inequality in
\eqref{cubic:positive-bound}. The remaining inequalities follow
from $A_{3,q,G}>1$ and $e^{52}+3+\log2<e^{53}$.
\end{proof}

The bound improves $0.4718 e^{-e^{182}}$ from
\cite[Remark~1.8]{DFL21} for the same full cubic family. Together
with Theorems~\ref{thm:GZpositive} and~\ref{sextic:positive}, it proves
Theorem~\ref{thm:main-non-vanishing}. The improvement follows from
\eqref{eq:GZparameters} and the exceptional-set estimate, using
only the twisted first moment in Proposition~\ref{cubic:twisted}.
The constants are not optimized; the limits keep $q$ fixed, and no
explicit starting genus is specified.

\end{document}